\definecolor{limegreen}{rgb}{0.196,0.804,0.196}
\definecolor{darkgreen}{rgb}{0.0,0.5,0.0}
\definecolor{darkbluegreen}{rgb}{0,0.3,0.6}
\definecolor{badgerred}{rgb}{0.715,0.004,0.004}
\newcommand{\R}{{\mathbb R}}
\newcommand{\be}{\begin{equation}}
\newcommand{\ee}{\end{equation}}
\newcommand{\bee}{\begin{equation*}}
\newcommand{\eee}{\end{equation*}}
\newcommand{\fr} {\frac}
\newcommand{\e}{\epsilon}
\newcommand{\bw}{\bar w}
\newcommand{\ds}{\displaystyle}
\newcommand{\tw}{\hat w}
\newcommand{\bremark}{\begin{remark} \em}
\newcommand{\eremark}{\end{remark} }
\newtheorem{thm}{Theorem}[section]
\newtheorem{theorem}{Theorem}[section]
\newtheorem{lemma}[thm]{Lemma}
\newtheorem{claim}[thm]{Claim}
\newtheorem{prop}[thm]{Proposition}
\newtheorem{definition}[thm]{Definition}
\theoremstyle{remark}
\newtheorem{remark}{Remark}[section]
\numberwithin{equation}{section}
\title[Type II   Singularities on complete non-compact Yamabe flow ]{ Type II  Singularities on complete non-compact Yamabe flow}
\author{Beomjun Choi}
\address{ {\bf B. Choi:} Department of Mathematics, Columbia University, 2990 Broadway, New York, NY 10027, USA.}
\email{bc2491@columbia.edu }
\author{Panagiota Daskalopoulos}
\address{ {\bf P. Daskalopoulos:} Department of Mathematics, Columbia University, 2990 Broadway, New York, NY 10027, USA.}
\email{pdaskalo@math.columbia.edu}
\author{John King}
\address{ {\bf J. King:}  School of Mathematical Sciences, The University of Nottingham, University Park,
Nottingham, NG7 2RD}\email{john.king@nottingham.ac.uk}
\begin{document}


\begin{abstract}
This work concerns with the existence and detailed asymptotic analysis   of Type II singularities for solutions to  complete non-compact  conformally flat Yamabe flow with cylindrical behavior at infinity.   We provide the specific blow-up rate of 
the maximum curvature and show that the solution  converges,  after  blowing-up around the curvature maximum points, to  a rotationally symmetric steady soliton. It is the first time that the  steady soliton is shown to be a finite time singularity model of the Yamabe flow.  \end{abstract}
 \maketitle

\tableofcontents

\section{Introduction and preliminaries}

Let $(M,g_0)$ be a Riemannian  manifold without boundary  of dimension $n \geq 3$. For a mertric $$g = u^{\frac 4{n+2}} \, g_0$$
which is conformal to $g_0$, the scalar curvature $R$  of $g$ is given in terms of the
scalar curvature $R_0$ of $g_0$ by
$$R= u^{-1} \, \big ( - \bar c_n \Delta_{g_0} u^{\frac{n-2}{n+2}} + R_0 \, u^{\frac {n-2}{n+2}} \big )$$
where $\Delta_{g_0}$ denotes the Laplace Beltrami operator with respect to $g_0$ and $\bar c_n =  4 (n-1)/(n-2)$.

In 1989 R. Hamilton introduced the {\em Yamabe flow}
\begin{equation}
\label{eq-yamabe}
\frac{\partial g}{\partial t} = -R\, g
\end{equation}
as an approach to solve the {\em Yamabe problem}  on manifolds of positive conformal Yamabe invariant.
In the case where $M$ is compact the long time existence and convergence of Yamabe flow is well understood. 
Hamilton \cite{H} himself  showed the existence of the  normalized Yamabe flow (which is the re-parametrization of (\ref{eq-yamabe}) to keep the volume fixed)  
 for all time; moreover, in the case when the scalar curvature of the initial metric is negative,  he showed   the exponential convergence  of the  flow  to a  metric of constant scalar curvature. 
Chow \cite{Ch} showed the  convergence of the flow,  under the conditions that the initial metric is locally conformally flat and of positive Ricci curvature.
The convergence of the flow for any locally conformally flat initially metric was shown by Ye  \cite{Y}.

Schwetlick and Struwe  \cite{SS}  obtained the convergence
of the  Yamabe flow on a general compact manifold under
a suitable Kazdan-Warner type of condition that rules out the formation of bubbles
and that is verified  (via the positive mass Theorem) in dimensions $3 \leq n \leq 5$.
 The convergence  result,   in its full generality, was established by
 Brendle  \cite{S2} and \cite{S1}   (up to a technical assumption, in dimensions $n \ge 6$,   on the rate of vanishing of Weyl tensor at the points at which it vanishes):
starting with any smooth  metric on a compact manifold, the normalized Yamabe flow   converges to a metric of constant scalar curvature.

Although   the  Yamabe flow on compact manifolds is well understood,  the complete non-compact case is  unsettled. In this case one expects to
have more types of singularities which could  be either of type I or type II according to the definition below.

\begin{definition}
Assume that a solution $g(t)$ of the Yamabe flow \eqref{eq-yamabe} on a Riemannian  manifold  has a  singularity at time  $T$. 
This singularity is called type I  if $$ \limsup_{t\to T^-} \,\, (T-t)\, \sup_M |\mbox{Rm} |(\cdot,t)<+\infty.$$ A singularity which is not of type I, is called type II.  
\end{definition}

The results mentioned above show that in the   generic {\em  compact}  case the only singularities of the Yamabe flow are  type I. 
Moreover the works   \cite{DS1,  DKS}  address the  {\em singularity formation}  of  complete non-compact solutions to the conformally flat Yamabe flow whose conformal factors have {\em cylindrical behavior at infinity}. These  singularities are all of type I. 

A natural question to ask is whether the Yamabe flow {\em admits any singularities which are of type II in the non-compact case}. 
The authors in \cite{CD} presented,  for the first time in the Yamabe flow, examples of complete  solutions 
 which develop a   type II  singularity, either  at finite time  $T < +\infty$ or  at  infinite time  $T=+\infty$.  Such solutions are conformally equivalent to $\R^n$ and their initial data  has cylindrical behavior at infinity if $T<\infty$. 
What distinguishes our type II solutions from the type I solutions which are modeled on shrinkers, is that their initial metric has {\em  slower second order decay rate} to the cylindrical metric than that of any other Yamabe shrinkers. In this work we study complete non-compact and conformally flat solutions of the Yamabe flow \eqref{eq-yamabe} on $\R^n$ which develop {\em  type II singularity}  and provide  their {\em detailed  asymptotic behavior}  near the singularity.

Let us briefly discuss next the known results on the singularity formation of non-compact Yamabe flow. Even though the analogue of Perelman's monotonicity formula is still lacking for the Yamabe flow, one expects that  Yamabe soliton solutions  model finite and infinite time singularities. These are special solutions  of the Yamabe flow \eqref{eq-yamabe} which is characterized by a metric $g=g_{ij}$ and a {\em potential  function $P$}  so that
$$(R-\rho)g_{ij} = \nabla_i\nabla_j P, \qquad \rho \in \{1,-1,0\} .$$
Depending on the sign of the constant $\rho$,  a Yamabe soliton is  called a {\em Yamabe shrinker}, a {\em Yamabe expander} 
or a { \em Yamabe steady soliton}  if $\rho=1,-1$ or $0$ respectively. 

The classification of locally conformally flat Yamabe solitons
with positive  sectional curvature was  established in \cite{DS} (c.f. also \cite{CSZ} and \cite{CMM}). 
It is shown  in \cite{DS} that such solitons are globally conformally equivalent to $\R^n$ and   correspond to radially symmetric self-similar solutions of the fast-diffusion equation 
\begin{equation}\label{eq-flatyamabe}
u_t = \fr{n-1}{m}\,  \Delta \, u^{\frac  {n-2}{n+2}}, \quad \mbox{on}\,\, \R^n \times [0,T)
\end{equation}
satisfied by the conformal factor defined by $ g_{ij}=u^{\fr{4}{n+2}} \delta_{ij}  $.   Here and in the 
sequel $\delta_{ij}$  denotes the standard metric on $\R^n$ and we set ${\ds m:=({n-2})/({n+2})}$. A complete description of those solutions is  given in \cite{DS}. 
In \cite{CSZ} the assumption of positive sectional curvature was relaxed to that of nonnegative Ricci curvature.

As mentioned above, in   \cite{DS1,  DKS}   the  {\em singularity formation}  of  complete non-compact solutions to the conformally flat Yamabe flow with  {\em cylindrical behavior at infinity} was studied.  The singularity profiles  are Yamabe shrinking solitons which are determined  by the second order asymptotics at infinity of the initial data, which is  matched with that of the corresponding soliton. 
The  solutions  may become  extinct at the extinction
time $T$ of the cylindrical tail  or may live longer than $T$. In  the first case,  the singularity profile  is described by a {\em Yamabe shrinker}  that becomes extinct at time $T$. 
This result can be seen as a stability result around the Yamabe shrinkers with cylindrical behavior at infinity. 
In the second  case, the flow develops a singularity at time $T$  which is described by a {\em singular}  Yamabe shrinker slightly before $T$
and by  a matching  {\em Yamabe expander}  slightly after $T$. 

Recently the authors \cite{CD} studied long time behavior of the complete non-compact conformally flat Yamabe flow and in particular showed the stability around the steady solitons. Such solitons are conformally equivalent to $\R^n$ and rotationally symmetric. They are the analogue to the bowl translating soliton of MCF or the Bryant soliton  of the  Ricci flow. 

In this work we study  the  {\em  asymptotic behavior} of type II singularities in the conformally flat non-compact case.
More precisely,  for a sufficiently small  $T < +\infty$, 
we provide a  condition, in terms of the second order decay rate of  the initial metric $g_{\gamma,\kappa}(\cdot, 0)$
at spatial infinity, which guarantees that the Yamabe flow $g_{\gamma,\kappa}(\cdot,t)$ develops
a type II singularity at time $T$ with specified blow up rate 
\begin{equation}  \limsup_{t\to T^-} \,\,  (T-t)^{1+\gamma}\sup_M | \mbox{\em Rm} | \, (\cdot,t) =\kappa. \end{equation}  Moreover, 
we prove that after rescaling  the  solution  $g_{\gamma,\kappa}(\cdot, t) $  around highest curvature point  by $(T-t)^{-(1+\gamma)}$,
 it  converges to a radial steady gradient soliton.     
 
 Our main result states as follows:
 \begin{thm}\label{thm-main}
Let $g_0=u_0^{1-m}(x) \, \delta_{ij} $ be a conformally flat metric with positive Ricci curvature. For any given $\gamma>0$ and $A>0$, there is $T_1>0$ with the following property:  for any $T < T_1$, if   
\begin{enumerate}[{\em i)}]
\item $u_0^{1-m} (x)<\fr{(n-1)(n-2)}{|x|^2} \, T, \,\,\,  \forall  x \in \R^n$, and  
\item $ u_0^{1-m}(x) = \fr{(n-1)(n-2)}{|x|^2}\,  \left ( T- \left(\fr{\ln |x|}{A}\right)^{-\fr{1}{\gamma}} + O\big ((\ln |x|)^{-\fr{1}{\gamma}-1}\big)\right),
\,\, $ as  $|x| \to +\infty$
\end{enumerate}
 then the solution of Yamabe flow \eqref{eq-yamabe} with initial data $g_0$ will develop
a type II singularity at time $t=T$ with   specified blow up rate given by 
\begin{equation}\label{eq-blowuprate}  \limsup_{t\to T^-} \,\,  (T-t)^{1+\gamma}\sup_M | \mbox{\em Rm} | \, (\cdot,t) =\fr{2\gamma A}{\sqrt{ n(n-1)}}. \end{equation}  Moreover,  after rescaling the metric around the highest curvature point  by $(T-t)^{-(1+\gamma)}$,
it converges to the unique radial steady gradient soliton of maximum scalar curvature $2\gamma A$. 
\end{thm}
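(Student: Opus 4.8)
The plan is to reduce the geometric statement to a quantitative study of the radial fast-diffusion equation \eqref{eq-flatyamabe} for the conformal factor $u$, and to run a matched-asymptotics / barrier argument driven by the prescribed second-order tail in hypothesis (ii). First I would pass to cylindrical coordinates, writing $u^{1-m}(x,t)=\frac{(n-1)(n-2)}{|x|^2}\,v(s,t)$ with $s=\ln|x|$ (and exploiting rotational symmetry, which is preserved by the flow since the initial data is radial up to the curvature-maximum structure). In these variables \eqref{eq-flatyamabe} becomes a one-dimensional quasilinear parabolic equation for $v$ on a half-line, whose stationary ``cylinder'' solution $v\equiv$ const is the exact analogue of the cylinder in MCF, and whose decay to that cylinder at $s\to\infty$ is controlled by hypothesis (ii): the tail $v(s,0)\sim T-(s/A)^{-1/\gamma}$ is the borderline slow algebraic-in-$\ln|x|$ decay that is strictly slower than any Yamabe shrinker's decay. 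The extinction time of the cylindrical tail is exactly $T$, by hypothesis (i) together with the comparison principle, since (i) says the initial metric lies strictly below the cylinder soliton that vanishes at time $T$.

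Next I would construct matching sub- and super-solutions near the vanishing point. The type II blow-up rate $(T-t)^{-(1+\gamma)}$ is forced by the ansatz: plug $u^{1-m}\approx \frac{(n-1)(n-2)}{|x|^2}(T-t-\varphi(|x|,t))$ into \eqref{eq-flatyamabe} and balance terms; the exponent $1+\gamma$ and the constant come out of requiring the spatial profile $(s/A)^{-1/\gamma}$ to be consistent with the temporal decay, exactly as in King's formal analysis of extinction for fast diffusion. This gives the candidate rate \eqref{eq-blowuprate} with constant $\frac{2\gamma A}{\sqrt{n(n-1)}}$ after converting from scalar curvature $R$ (equivalently the coefficient of the cylinder) to the full curvature norm $|\mathrm{Rm}|$, which for a rotationally symmetric conformally flat metric is a fixed multiple of $R$. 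I would then upgrade the formal ansatz to rigorous barriers: build explicit super/sub-solutions of \eqref{eq-flatyamabe} of the form $\frac{(n-1)(n-2)}{|x|^2}\big(T-t-(1\pm\e)(\ln|x|/A)^{-1/\gamma}+\text{lower order}\big)$, valid for $|x|$ large and $t$ near $T$, using that the $O((\ln|x|)^{-1/\gamma-1})$ error in (ii) is absorbable; smallness of $T$ (hence $T_1$) is used here to keep the curvature of $g_0$ positive along the flow and to guarantee the barriers are ordered on a large enough region and the solution stays in the perturbative regime. The comparison principle for the degenerate/singular equation \eqref{eq-flatyamabe} then pins $\sup_M|\mathrm{Rm}|(\cdot,t)$ between $(1-\e)$ and $(1+\e)$ times the claimed quantity for all $t$ close to $T$, giving \eqref{eq-blowuprate}.

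For the convergence statement, I would rescale: let $\hat g(\cdot,\tau)=(T-t)^{-(1+\gamma)}g(\cdot,t)$ with $\tau=-\ln(T-t)$, centered at the point where $|\mathrm{Rm}|$ is maximized, and show the rescaled conformal factors are uniformly bounded in $C^k_{loc}$ (using local parabolic estimates for \eqref{eq-flatyamabe}, positivity of curvature from \cite{CD}, and the barrier bounds which prevent collapse or escape). Any subsequential limit is an ancient, complete, rotationally symmetric, positively curved solution whose scalar curvature has a fixed maximum $2\gamma A$; by the classification of locally conformally flat Yamabe solitons with positive curvature in \cite{DS}, together with the time-translation structure that emerges from the $(1+\gamma)$-scaling (which produces a \emph{steady} rather than shrinking soliton — this is where the slower-than-shrinker decay is decisive), the limit must be the unique radial steady gradient soliton with maximum scalar curvature $2\gamma A$, and uniqueness of this soliton forces full convergence rather than merely subsequential. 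The main obstacle, I expect, is the construction of sufficiently sharp barriers: the decay in (ii) is logarithmically slow and borderline, so the sub/super-solutions must capture two orders of the expansion simultaneously (the $(\ln|x|)^{-1/\gamma}$ term \emph{and} its correction), and one must control the interaction between the ``outer'' cylindrical region and the ``inner'' soliton region uniformly as $t\to T$ — a delicate matched-asymptotics gluing that is the technical heart of the argument.
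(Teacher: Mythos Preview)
Your outline has the right architecture (outer/inner barriers, comparison, rescaled limit, soliton classification), but two genuine gaps would prevent it from closing.

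First, the hypotheses do \emph{not} force $g_0$ to be rotationally symmetric; the paper explicitly allows non-radial initial data. The barriers are radial, but the solution trapped between them need not be, so you cannot reduce to a one-dimensional equation for $v(s,t)$ and you cannot assert that any rescaled subsequential limit is rotationally symmetric a priori.

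Second, and this is the essential missing idea, the barriers that hypothesis (ii) actually lets you build do \emph{not} squeeze to a single profile. The $O((\ln|x|)^{-1/\gamma-1})$ error in (ii) is one full order larger than the second-order term in the outer barrier (which at $t=0$ is of size $(\ln|x|)^{-1/\gamma-2}\ln\ln|x|$), so $u_0$ can only be placed between two barriers that differ by a \emph{fixed} translation in $s=\ln|x|$, not by an arbitrarily small $\e$. After rescaling, the upper and lower barriers converge to two \emph{different} translates $\bar U_{\xi_b}$ and $\bar U_{\xi_a}$ of the steady soliton, so comparison only yields that every subsequential limit $\bar u_\infty$ is an eternal solution with $\bar U_{\xi_a}\le \bar u_\infty\le \bar U_{\xi_b}$; your plan to let $\e\to 0$ and pin the curvature to $(1\pm\e)$ times the target does not go through. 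The paper closes this gap by a rigidity argument: Chow's differential Harnack inequality \cite{Ch} gives $\partial_t R\ge 0$ on the eternal limit, and an ODE comparison of $u_\infty$ against the soliton lower bound forces $\sup R_{\bar u_\infty}=2\gamma A$ to be \emph{attained} at an interior space-time point; the equality case of the Harnack (Corollary~5.1 in \cite{DS}) then says $\bar u_\infty$ is a steady gradient soliton, and only \emph{after} that does the classification of \cite{CMM,CSZ} yield rotational symmetry. Without this Harnack/rigidity step the barriers alone determine neither the exact blow-up constant nor the uniqueness of the limit.
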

 
\smallskip

Theorem \ref{thm-main}  shows   for the first time that the  conformally flat radial steady soliton appears as a  finite time singularity model for  the Yamabe flow. 
In  the Mean curvature flow and the Ricci flow, examples of  type II singularities  and their asymptotic behavior  were shown in both compact and non-compact settings under  rotational symmetry (c.f. \cite{AV}, \cite{IW} for Mean curvature flow and \cite{AIK2},\cite{AIK},\cite{W} for Ricci flow).    Let us remark that unlike in the cases mentioned above our result also includes {\em non-radial initial data. }

\smallskip

To achieve our result  we first construct sharp barriers (super and sub-solutions) for given fixed $\gamma$ and $A$.
The barriers are chosen sufficiently close to each other so that they give a model solution whose blow up limit at the tip is a radial steady soliton with the curvature blow up rate \eqref{eq-blowuprate}. In other words this already proves the result of Theorem \ref{thm-main} if the initial metric is in between sharp barriers. When the initial metric satisfies the condition of Theorem \ref{thm-main}, the solution can be located between two model solutions which are differ by a translation in cylindrical direction. Then we do further analysis to show the solution also has the same asymptotic behavior and curvature blow up rate. 

The barriers will be constructed to be radially symmetric though we don't assume it for initial metrics. Note that for a radially symmetric solution $g:=u^{\frac 4{n+2}}(r,t)\, (dr^2+r^2dg_{S^{n-1}})$ of the Yamabe flow \eqref{eq-flatyamabe}, it is often convenient to work in  cylindrical coordinates where the metric is expressed as 
$g=w(s,t)\, (ds^2+dg_{_{S^{n-1}}})$,  with $s=\ln r$. The conformal factor in cylindrical coordinates is given by 
\begin{equation}\label{eq-uw}
w(s,t)=r^2 \, u^{\frac 4{n+2}}(r,t), \qquad s=\ln r
\end{equation}
and satisfies the equation 
\begin{equation}\label{eq-w}
\fr{m}{n-1}\Bigl(w^\fr{n+2}{4}\Bigr)_t=\Bigl(w^\fr{n-2}{4}\Bigr)_{ss} -\Bigl(\fr{n-2}{2}\Bigr)^2w^\fr{n-2}{4}, \qquad \mbox{on} \,\,   \R \times [0,T]
\end{equation}
with ${\ds m:=\frac{n-2}{n+2}}$. 

\medskip

The {\em outline of our paper}  is as follows: In the Section \ref{sec-formal}  we will begin by giving the {\em formal matched asymptotics}   of 
the  type II  singularity at time $t=T$. Based on this analysis,  we will  introduce in Section \ref{sec-notation}   the two different regions {\em outer and inner}  and the scalings
in each region.  Also some notation. We will refer to the notation of this  section throughout the paper.  Section \ref{sec-outer}  deals with  the   {\em barrier construction}  in the {\em outer}  region  (c.f. Propositons \ref{prop-outer} and  \ref{prop-outer2}) and  Section \ref{sec-inner} deals with the {\em barrier construction}   in the {\em inner}  region  (c.f. Proposition \ref{prop-inner}). 
Combining the results from Sections \ref{sec-outer} and
\ref{sec-inner}, in Section \ref{sec-barriers} we will glue the barriers in the
inner and outer region to construct suitable  {\em super and sub-solutions}. In Section \ref{sec-asymptoticshape} we will show one of our main results, Theorem \ref{thm-main2}, 
which shows the {\em type II convergence}  of any   given conformally flat Yamabe flow   to the steady soliton,  assuming  that its initial data is trapped between 
our super and sub-solutions. Finally, our last section \ref{sec-new} will be devoted to  the proof of Theorem \ref{thm-main}.  
In this section, along  with the barriers constructed in previous sections, we will make use  of the differential Harnack inequality  in \cite{Ch} and  the classification result of  Yamabe solitons  (c.f. \cite{DS,CMM,CSZ}).
\bigskip


\section{Formal matched asymptotics}\label{sec-formal}
In  this section we will present the formal construction of our solutions   which is based on matched asymptotic analysis. We hope that this will  give our  reader an intuition for our  construction. 

For any given parameters $\gamma>0$ and $A>0$  we will construct below a family of formal  rotationally symmetric  solutions where the 
curvature blows up in the type II rate 
\begin{equation}\label{eqn-cur1} 
\lim_{t \to T^-} \, (T-t)^{1+\gamma}\sup_{M_t}|  \mbox{Rm}  |=\fr{2\gamma A}{\sqrt{n(n-1)}}.
\end{equation}
Note that our main results Theorem \ref{thm-main1} and \ref{thm-main2} are  not restricted on rotationally symmetric solutions, however the 
barriers which will be constructed in next sections are obtained from perturbations of this formal solution and those are radial. 

Motivated by condition  \eqref{eqn-cur1} and in order to capture at the end a stationary solution, we perform the  change of variables 
on our solution $w(s,t)$ of \eqref{eq-w} setting
\begin{equation}\label{eqn-cv1}
\hat w(\eta, \tau)=  (T-t)^{-1} w (s,t), \quad  \eta=(T-t)^\gamma s, \,\, \tau = - \ln \, (T-t).
\end{equation}
 A direct calculation shows that $\hat w(\eta,\tau)$ satisfies the evolution 
\begin{equation}\label{eq-tildew}
B[\tw] =0
\ee
where 
\be\label{eqn-derror}
\begin{split}
B[\tw]  :=  \partial_\tau \hat w  - (n -1) &e^{-2\gamma \tau} \, \Bigl (  \fr{\hat w_{\eta\eta}}{\hat w}+\fr{n-6}{4}\fr{\hat w_\eta ^2}{\hat w ^2} \Bigr ) \\  &- \big (  \gamma \eta   \,   \hat w_\eta +  \hat w - (n-1)(n-2) \big ).
\end{split}
\ee
Thus, if we assume that   
\begin{equation}\label{eqn-lower1} 
\partial_\tau \hat w \qquad \mbox{and} \qquad  e^{-2\gamma \tau} \Bigl(\fr{\hat w_{\eta\eta}}{\hat w}+\fr{n-6}{4}\fr{\hat w_\eta ^2}{\hat w ^2} \Bigr )
\end{equation} 
are negligible as $\tau \to \infty$, the above  equation is  reduced to the following ODE in $\eta$ variable 
\begin{equation}\label{eq-reduced}
\gamma    \eta\,   \hat w_\eta + \hat w-(n-1)(n-2) =0.
\end{equation}
Solving  this equation on  $\eta >0$ gives the solution $\tw_0(\eta)$ given by 
\begin{equation}\label{eq-tildewansatz}
\hat w_0(\eta)  := (n-1)(n-2) \bigl (  1- \kappa \, \eta ^{-1/\gamma} \bigr )\end{equation}
for a parameter  $\kappa \in \R$. We will assume from now on, without loss of generality,  that $\kappa >0$,   although $\kappa\le 0$ also gives a solution. Moreover, there is a   family of solutions to \eqref{eq-reduced} on $\eta <0$, but  this case is  exactly symmetric to the $\eta>0$ case which we will handle below.  
Indeed, the solutions  given by \eqref{eq-tildewansatz}   describe non-compact surfaces  moving in positive $\eta$ (hence positive $s$) direction,  while the corresponding  solutions defined on $\eta<0$ describe a symmetric  surface just moving on  the opposite direction. 
\smallskip 
This ansatz, namely setting  
\begin{equation}\label{eqn-www}
\hat w(\eta,\tau) := (n-1)(n-2) \, \bigl (  1- \kappa\,  \eta ^{-\fr{1}{\gamma}} \bigr )
\quad \text{ on }\,\, \eta > \kappa^{\gamma}
\end{equation}
 approximates a solution  of the equation \eqref{eq-tildew}. In fact, plugging $\hat w(\eta,\tau)$ given by \eqref{eqn-www}  into \eqref{eq-tildew},  we see that the error term  
 becomes 
\be\label{eqn-bw}
B[\tw] = - (n-1) e^{-2\gamma \tau}  \, \Bigl (\fr{\hat w_{\eta\eta}}{\hat w}+\fr{n-6}{4}\fr{\hat w_\eta ^2}{\hat w ^2} \Bigr).
\ee
Writing ${\ds B[\tw]=(n-1)(\hat w e^{\gamma \tau} )^{-2} \big (\hat w \hat w _{\eta\eta}+ \fr{n-6}{4} {\hat w}_\eta^2 \big )}$, we see that it 
becomes  arbitrarily small in  the   space-time region 
$$(e^{\gamma \tau} \,  \hat w)^{-1}=o(1), \qquad  \,\, \mbox{as}\,\,  \tau  \to +\infty$$
which  we call  the  {\em  outer region} (see Figure \ref{figure0} below). This is  the region where the diffusion doesn't  play an
 important    role and the remaining advection and reaction terms are dominant. 

The  {\em inner-region}  is given by 
\be\label{eqn-dfn-inner}
 e^{\gamma \tau}\, \hat w(\eta,\tau) = O(1),  \qquad \mbox{as} \,\, \tau   \to +\infty
 \ee
(see Figure~\ref{figure0}). In this region  
we  perform another scaling, setting 
\begin{equation}\label{eqn-cv2}
\bar w(\xi,\tau)=e^{\gamma\tau}\hat w(\eta,\tau), \qquad \eta= A + e^{-\gamma\tau}\xi
 \end{equation}for some choice of $A>0$, which combined with  \eqref{eqn-cv1} gives 
\begin{equation}\bar w(\xi, \tau) = e^{\gamma \tau} \, \hat w (A+e^{-\gamma\tau}\xi,\tau) = e^{(1+\gamma)\tau} \, w(Ae^{\gamma\tau}+\xi,T-e^{-\tau}). \end{equation}
The evolution equation for $\bar w(\xi,\tau)$is given by 
\begin{equation}\label{eq-barw}\begin{split}
I[\bar w] :=  & e^{-\gamma \tau} \Bigl( \bar w_\tau- (1+\gamma) \bar w  \Bigr) -(n-1)\Bigl(\fr{\bar w_{\xi\xi}}{\bar w}+\fr{n-6}{4} \, \fr{\bar w_\xi ^2}{\bar w ^2} \Bigr ) 
\\ &+(n-1)(n-2) - \gamma A \bar w_\xi=0 \end{split} . \end{equation}
Thus assuming that in this region  the first term having $e^{-\gamma\tau}$ becomes  negligible  as $\tau\to\infty$, the equation is reduced to 
\begin{equation}\label{eq-barwapprox}\begin{aligned}
(n-1)\Bigl(\fr{\bar w_{\xi\xi}}{\bar w}+\fr{n-6}{4}\fr{\bar w_\xi ^2}{\bar w ^2} \Bigr)-(n-1)(n-2) + \gamma A \bar w_\xi  =0\end{aligned}. \end{equation} This can be seen as the equation satisfied by a traveling wave solution  of \eqref{eq-w} with speed $\gamma A$. For each $A>0$, this equation admits a $\tau$ independent entire solution  $\bar w(\xi)$ which is unique up to translation in $\xi$. For such  profile $\bar w$, the function   $\bar w (s-\gamma At)$ becomes   a traveling wave solution of  equation \eqref{eq-w}. 
From the geometric point of view,    $\bar w(s)$ corresponds to a radially symmetric non-compact metric on  $\R^n$ via \eqref{eq-uw} and solutions with different $A$'s are the only radially symmetric {\em steady gradient solitons} on locally conformally flat manifolds (c.f.  \cite{CSZ, DS}).  
For such a solution $\bar w$, the highest curvature point is at the origin (i.e.  $s=-\infty$) and one may formally compute that ${\ds |\operatorname{Rm}|_{\max}(t) = \fr{2\gamma A}{\sqrt{n(n-1)}}}$, hence leading to  \eqref{eqn-cur1}.   

\begin{remark}
Note that  a solution $\bar w(\xi,\tau)$   of \eqref{eq-barwapprox} which  also depends  on $\tau$, 
can be  written as \begin{equation}\bar w(\xi,\tau)=\bar w_0 (\xi + C(\tau)) \end{equation}
for a function $C(\tau)$, where  $\bar w_0$ is one $\tau-$independent solution of \eqref{eq-barwapprox}. By plugging this into \eqref{eq-barw} again, we get an error term \begin{equation}e^{-\gamma \tau} \, \Bigl( (1+\gamma)\bar w-C'(\tau)\,  \bar w_{\xi} \Bigr) \approx 0.\end{equation}
We  may choose  $C(\tau)$ so that $C'$ is small and thus the  error term above vanishes appropriately  as $\tau\to \infty$.  Later, we will use this $C(\tau)$ to glue barriers from the two different regions, inner and outer.  
\end{remark}
\begin{figure}
\def\svgwidth{\linewidth}{\tiny
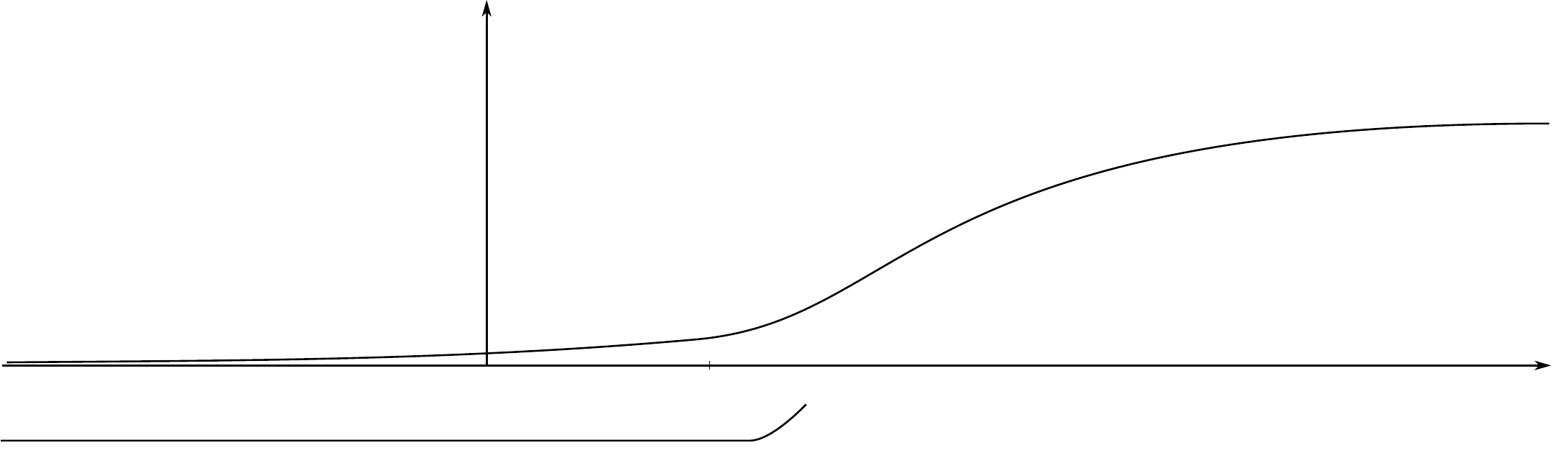
\caption{}\label{figure0}}

\end{figure}

\smallskip
Next, we will carry out a  matching asymptotic analysis between the inner and outer  regions and obtain a relation 
between $\kappa>0$ in \eqref{eq-tildewansatz} and $A>0$ in \eqref{eq-barw}.
It is known that a solution $\bar W(\xi)$ of \eqref{eq-barwapprox} satisfies the  asymptotic behavior 
$$\bar W(\xi)\approx \fr{(n-1)(n-2)}{\gamma A}\xi + O(1), \qquad  \mbox{ as }\,\,  \xi \to \infty.$$
Therefore, recalling that $\eta= A+e^{-\gamma \tau} \, \xi$,  our solution $\bar w(\xi,\tau)$ which is approximately $\bar W(\xi)$ satisfies  
$$
  e^{-\gamma\tau} \, \bar w(\xi,\tau) \approx  \fr{(n-1)(n-2)}{\gamma A}(\eta-A)+ o(1), \quad  
  (\eta-A) \, e^{\gamma\tau} \gg 1,\, \, \tau \gg1.
$$
 On the other hand, from the outer region ansatz \eqref{eq-tildewansatz}, by the first order Taylor approximation 
 near $\eta=\kappa^\gamma$ we have
 \begin{equation*}\begin{aligned}\hat w(\eta, \tau) &\approx (n-1)(n-2) \, \fr{\kappa}{\gamma} \, (\kappa^\gamma)^{-\fr{1}{\gamma}-1}(\eta-\kappa^\gamma)+o(1)\\ & \approx\fr{(n-1)(n-2)}{\gamma \kappa^\gamma}(\eta-\kappa^\gamma)+o(1), \qquad 
 \mbox{for}\, \, \eta \, \,  \mbox{near}\, \,   \kappa^\gamma, \,\,  \tau \gg1. \end{aligned} \end{equation*}
Thus, we  can see that these two asymptotics are matched exactly if $$A=\kappa^\gamma.$$
To see this in another way, we can argue that if  $A<\kappa^\gamma$   the  two asymptotics are inconsistent as $\eta\to (\kappa^\gamma)_+$  and if $\kappa^\gamma <A$  the linearization of  \eqref{eq-tildewansatz} near $\eta=A$ is inconsistent with the asymptotic behavior from the inner region.

\section{Notation and different scalings}\label{sec-notation}  
In this section we will  summarize the coordinates and  different scalings of our solutions, as introduced in the previous
section. We will refer to the notation introduced below  throughout the paper.

\subsection{Coordinate systems} 

Since our metric $g(t)$ is conformally equivalent to the standard euclidean metric on $\R^n$ denoted briefly by $\delta_{ij} $,    we represent $$g(t)=u^{1-m}(x,t) \, \delta_{ij} $$ 
where the scalar function $u(x,t)$ evolves by  equation \eqref{eq-flatyamabe} under  Yamabe flow. 

In case when the metric is radial, it is often convenient to work in the cylindrical coordinates, that is 
$$ g(t)=u^{1-m}(r,t) (dr^2 + r^2 g_{S^{n-1}}) = w(s,t) (ds^2 + g_{S^{n-1}}), \quad r=|x|$$ and $$w(s,t)=r^2 u^{1-m}(r,x), \qquad s=\ln r=\ln |x|.$$
Under this coordinate change, $w(s,t)$ evolves by  equation \eqref{eq-w}.

\subsection{Scalings} We use the following different scaling in different regions: 

\begin{itemize} 

\item In the {\em outer region}, the conformal factor is represented by $\tw(\eta,\tau)$ and is scaled from $w(s,t)$ as follows 
$$\hat w(\eta,\tau)=e^{\tau}w(e^{\gamma\tau}\eta,T-e^{-\tau}).$$
The function $\tw (\eta,\tau)$ evolves by the equation \eqref{eq-tildew}.

\item In the {\em inner region}, the conformal factor is represented by $\bar w(\xi,\tau)$ and is scaled from previous factors as follows 
$$\bar w(\xi,\tau) = e^{\gamma \tau} \tw (A+\xi e^{-\gamma \tau} ,\tau)= e^{(1+\gamma)\tau}w (Ae^{\gamma\tau} + \xi,T-e^{-\tau}).$$
The function $\bar w(\xi, \tau)$ evolves by the equation \eqref{eq-barw}.

\item The above scaling change from  $w(s,t)$ to $\bar w(\xi,\tau)$ corresponds  to  the following scaling change in euclidean coordinates  from  $u^{1-m}(x,t)$ to  $\bar u^{1-m} (y,l)$:
\begin{equation}\label{eqn-u10}
|x|^2\, u^{1-m}(x,t)=(T-t)^{1+\gamma}\,  |y|^2 \, \bar u^{1-m}(y,l) \end{equation}
where $l$ is a new time variable $l= (T-t)^{-\gamma}/\gamma$ and 
\begin{equation}\label{eqn-u11}
x= y \, e^{\gamma A l}= y \, e^{A (T-t)^{-\gamma}}.
\end{equation}
This scaling is used only in Section \ref{sec-asymptoticshape}.  $\bar u(y,l)$ evolves by equation \eqref{eqn-ry1}.

\end{itemize} 
\subsection{Relations}
Let us summarize relations between different variables and functions appearing  in the  scalings introduced above. 

\begin{itemize}
\item We use {\em three time scales}:
\begin{equation}
t\in[0,T),\quad \tau=-\ln (T-t),\quad  l=\fr{(T-t)^{-\gamma}}{\gamma}=\fr{e^{\gamma \tau}}{\gamma}.
\end{equation}
The last one will only be used in the last section.

\item The  {\em three space   scales}  in   cylindrical  coordinates are: 
\begin{equation}\label{eqn-s10}
s \in \R, \quad \eta: =e^{-\gamma\tau}  s, \quad  \xi=s - A \, e^{\gamma\tau}=  (\eta-A)\, e^{\gamma \tau}.
\end{equation}

\item The  corresponding {\em three  conformal factors}  in cylindrical coordinates at the different scales are: 
\begin{equation}\label{eqn-w10}
w(s,t)= e^{-\gamma \tau} \, \hat w (\eta,\tau)= e^{-(1+\gamma)\tau} \, \bar w (\xi,\tau). \end{equation}

\end{itemize} 

In the {\em radial case}, $w(\xi,\tau)$ and $\bar u^{1-m}(y,l)$      given by \eqref{eqn-u10} are related by 
\be\label{eqn-uw10}
 \bar w (\xi,\tau)=  |y|^2 \, \bar u(y,l), \qquad \xi=\ln |y|, \qquad  l=\fr{e^{\gamma \tau}}{\gamma}.
 \ee

\subsection{Functions}
We introduce below the  functions $\tw_0$, $\bar w_0$, $\bar U$, which play  important roles in the paper.
 
\begin{itemize}
\item For every  $A>0$, we define $\hat w_0(\eta)$ to be  the {\em outer region ansatz } 
 \begin{equation}\hat w_0(\eta) := (n-1)(n-2) \, \Bigl[ 1- \Bigl(\fr{\eta}{A}\Bigr) ^{-\fr{1}{\gamma}} \Bigr]
 \text{ on }\eta > A,\end{equation} which is a solution of \eqref{eq-reduced}. 
 
 \item For the same $A>0$ as above, we  denote by $\bar w_0(\xi)$ the 
 {\em 
 inner region approximation} which  the solution of   equation 
 \begin{equation}\begin{aligned}\label{eqn-soliton1} 
(n-1)\Bigl(\fr{\bar w_{\xi\xi}}{\bar w}+\fr{n-6}{4}\fr{\bar w_\xi ^2}{\bar w ^2} \Bigr)-(n-1)(n-2) + \gamma A \, \bar w_\xi  =0.\end{aligned}\end{equation}
 having asymptotic behavior  
\be\label{eqn-soliton2}
\bar w_0 (\xi) = \fr{(n-1)(n-2)}{\gamma A} \xi + 0 + \fr{(n-1)(n-6)}{4\gamma A} \fr{1}{\xi}+ O\Bigl(\ \fr{1}{\xi^2}\Bigr).
\ee
This solution represents a steady gradient soliton of the flow and it is unique up to translation in $\xi$.

\item Finally  $\bar U$ denotes the representation of $\bar w_0$ on $\R^n$ by the following change of coordinate \be|x|^2 \, \bar U^{1-m}(x)= \bar w_0(\ln |x|).\ee

\end{itemize}

\section{Barrier construction in the outer region}\label{sec-outer}

Let us fix parameters $\gamma >0$ and $A >0$ as they  appear in the curvature blow up rate of our  singularity  \eqref{eqn-cur1}. 
In this section we will construct  appropriate  super and  sub solutions in the {\em outer region} 
$(e^{\gamma \tau} \,  \hat w (\eta,\tau))^{-1}=o(1), \qquad  \,\, \mbox{as}\,\,  \tau  \to +\infty$, which will be  given by
$$\{ (\eta,\tau) \, | \, \eta \ge A +  \xi_0 \, e^{-\gamma \tau}, \,\, \tau\ge\tau_0 \}$$
for some $\xi_0 >0$.  

 Recall that for a rotationally symmetric solution $u(r,t)$, $r=|x|$  of the conformally flat Yamabe flow \eqref{eq-flatyamabe}, we perform the  cylindrical change of coordinates \eqref{eq-uw} leading to a solution  $w(s,t)$ of \eqref{eq-w}. 
As already seen in section \ref{sec-formal}, to capture the behavior in the outer region we perform a further change of variables \eqref{eqn-cv1} leading to a solution $\hat w(\eta,\tau)$ of \eqref{eq-tildew}. Assuming that near our singularity \eqref{eqn-lower1} holds, we find that the zero  order behavior of $\hat w$ near the singularity in the outer region is given by a  solution of the ODE
\eqref{eq-reduced}. The general solution of \eqref{eq-reduced}  is given by \eqref{eq-tildewansatz} for a parameter $\kappa >0$. Thus, setting   $A:=\kappa^{1/\gamma} >0$   we define the zero order
approximation of $\hat w$ in the outer region to be  
%
%
\begin{equation}\label{eq-tildew0}\hat w_0(\eta) = (n-1)(n-2) \Bigl[ 1- \Bigl(\fr{\eta}{A}\Bigr) ^{-{1}/{\gamma}} \Bigr]\quad
\text{ on }\eta > A.\end{equation} 

In this section, we are going to construct   sub and super solutions of equation \eqref{eq-tildew} 
in the form 
\begin{equation}\label{eqn-hw2}
 { \tw}(\eta,\tau) = \hat w_0(\eta) + e^{-2\gamma\tau}(\hat w_1(\eta)+\theta \, \tw_2(\eta))\end{equation} 
for a parameter  $\theta \in \R$.  
To this end, we will choose $\hat w_1(\eta)$ and  $\tw_2(\eta)$ to be  solutions of a  first order linear ODE with different inhomogeneous terms. By setting 
\be\label{eqn-f12}
f_1(\eta):=-(n-1) \, \fr{(\hat w_0)_{\eta\eta}}{\hat w_0}  \quad \mbox{and} \quad
f_2(\eta):=-(n-1) \, \fr{(\hat w_0)_\eta ^2}{\hat w_0 ^2},\ee
we will choose  $w_1, w_2$ to be solutions of  the equations 
\be
\begin{aligned}\label{eq-linearizationouter}
\gamma \eta\,  (\hat w_1)_\eta +{(1+2\gamma)} \,  \hat w_1 &= f_1(\eta) \qquad\text{on }\eta>A\\
\gamma \eta \, (\hat w_2)_\eta +{(1+2\gamma)} \,  \hat  w_2  &=f_2(\eta) \qquad\text{on } \eta>A.
\end{aligned}
\ee
Plugging  $\tw(\eta,\tau)$ given by \eqref{eqn-hw2} into the equation  gives that 
 \be\label{eq-Bw}
\frac{e^{2\gamma\tau}}{n-1}  B[\tw] =  \Bigl  (\fr{(\hat w_0)_{\eta\eta}}{\hat w_0}+\theta\, \fr{(\hat w_0)_\eta ^2}{\hat w_0 ^2} \Bigr) - \Bigl (\fr{\tw_{\eta\eta}}{\tw}+\fr{n-6}{4}\fr{\tw_\eta ^2}{\tw ^2} \Bigr )\ee
where  $B[\cdot]$ is given by \eqref{eqn-derror}
Thus,  from the proposed choice of $\hat w_1, \hat w_2$ to satisfy \eqref{eq-linearizationouter}, one expects that  
 $\tw$  is a subsolution  of equation \eqref{eq-tildew} if $\theta<\fr{n-6}{4}$ and a supersolution  if $\theta>\fr{n-6}{4}$, 
 for all parameters $\gamma >0$. 
The rest  of this section  is devoted to  the justification of this idea which requires a rather delicate calculation. The case of parameters $\gamma \geq 1/2$ is shown in Proposition \ref{prop-outer} below. As we will see in the proof  Proposition 
\ref{prop-outer2} below, in the case of
parameters  $\gamma < 1/2$ one needs to add correction term to $\tw$. 

\smallskip 
Recalling  the definition of  $\hat w_0$ in \eqref{eq-tildew0} and $f_1, f_2$ in \eqref{eqn-f12} we have  
\be\label{eqn-f12-2}
\begin{split}
f_1(\eta) &= +(n-1) \, \fr{\gamma+1}{\gamma^2} \, \fr{\eta^{-\fr{1}{\gamma}-2}}{A^{-\fr{1}{\gamma}}-\eta^{-\fr{1}{\gamma}}}>0\\
f_2(\eta) &= -(n-1)\, \fr{1}{\gamma^2}\frac{\eta^{-\fr{2}{\gamma}-2}}{(A^{-\fr{1}{\gamma}}-\eta^{-\fr{1}{\gamma}})^2} <0\end{split}
\ee
hence  the  explicit solutions $w_i, i=1,2$  of $\eqref{eq-linearizationouter}$ are given by 
\begin{equation} \label{eq-tildew_i}
\hat w_i(\eta)= \fr{\eta_0^{2+\fr{1}{\gamma}}\hat w_i(\eta_0)}{\eta^{2+\fr{1}{\gamma}}}+ \fr{1}{\eta^{2+\fr{1}{\gamma}}} \, \int_{\eta_0}^{\eta} \fr{f_i(x)}{\gamma}x^{1+\fr{1}{\gamma}}\, dx, \quad \eta >A.
\end{equation}
We will now fix   the functions  $\tw_1$ and $\tw_2$ by fixing their values at a given point $\eta_0 > A$.  
While doing so, we will choose $\hat w_2$ to be a positive function on $\eta >A$.   Indeed, since   $f_2(\eta)\, \eta^{1+\fr{1}{\gamma}}$  is integrable near $\eta=\infty$,  we may  choose 
$$\hat w_2(\eta):= -  \fr{1}{\eta^{2+\fr{1}{\gamma}}} \, \int_{\eta}^{+\infty}  \fr{f_2(x)}{\gamma}x^{1+\fr{1}{\gamma}}\, dx >0$$
that is we choose ${\ds \hat w_2(\eta_0):= - \gamma^{-1} \,  \eta_0^{-(2+\fr{1}{\gamma})} \int_{\eta_0}^{+\infty}  f_2(x)\, x^{1+\fr{1}{\gamma}} dx}$ in \eqref{eq-tildew_i}. For the function $\hat w_1$ we may choose any value as  $\hat w_1(\eta_0)$, since 
we do not need to choose it to be positive.  Note that by choosing $\hat w_2$ to be positive, the family of functions
$\tw_1+\theta \, \tw_2$ is monotone in $\theta \in \R$. To simplify the notation we will simply set from now on
\be\label{eqn-dh}
\hat h:= \tw_1+\theta \, \tw_2. 
\ee
We will  investigate the behavior of the  family  $ \hat h(\eta):=\big ( \tw_1+\theta \tw_2 \big )(\eta)$, $\theta \in \R$,  as  $\eta  \to A^+$ and
$\eta \to +\infty$.  We will first see that the  behavior near  $\eta =A^+$   is governed by $\tw_2$.

\begin{lemma} \label{prop-tildew1at1} For any linear combination $\hat h:=\tw_1+\theta  \tw_2$ of the solutions $\tw_1, \tw_2$
chosen above we have, we have  
\bee
\begin{split}
&\hat h (\eta) \, = +\fr{\theta}{\gamma A}\fr{n-1}{(\eta-A)}+ o\Bigl(\fr{1}{(\eta-A)}\Bigr)\\ 
&\hat h'(\eta) = -\fr{\theta}{\gamma A}\fr{n-1}{(\eta-A)^2}+o\Bigl(\fr{1}{(\eta-A)^2}\Bigr) \\
&\hat h''(\eta) = +\fr{2\theta}{\gamma A}\fr{n-1}{(\eta-A)^3}+o\Bigl(\fr{1}{(\eta-A)^3}\Bigr)
\end{split}
\eee
as $\eta \to A^+$. 
\end{lemma}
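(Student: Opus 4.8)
The plan is to extract all three claimed asymptotics from the explicit integral representation \eqref{eq-tildew_i}, combined with the precise singular behaviour of $f_1$ and $f_2$ near $\eta=A$, and then to use the linear ODE \eqref{eq-linearizationouter} to pass from $\hat h$ to its derivatives without re-differentiating any integrals. The upshot is that $f_2$ (hence $\hat w_2$, hence the parameter $\theta$) is one order more singular than $f_1$ as $\eta\to A^+$, so it alone dictates the leading terms.

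First I would record the local behaviour of the inhomogeneous terms. Since $A^{-1/\gamma}-\eta^{-1/\gamma}=\tfrac1\gamma A^{-1/\gamma-1}(\eta-A)\bigl(1+O(\eta-A)\bigr)$ as $\eta\to A^+$, the explicit formulas \eqref{eqn-f12-2} give
$$f_1(\eta)=\frac{(n-1)(\gamma+1)}{\gamma A}\,\frac{1}{\eta-A}\bigl(1+O(\eta-A)\bigr),\qquad f_2(\eta)=-\frac{n-1}{(\eta-A)^2}\bigl(1+O(\eta-A)\bigr),$$
and, since $f_1,f_2$ are explicit products of powers, differentiating them directly yields $f_1'(\eta)=O\bigl((\eta-A)^{-2}\bigr)$ and $f_2'(\eta)=\tfrac{2(n-1)}{(\eta-A)^3}\bigl(1+O(\eta-A)\bigr)$.

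Next I would feed these into \eqref{eq-tildew_i}. For $\hat w_1$ the integrand $\gamma^{-1}f_1(x)\,x^{1+1/\gamma}$ is $O\bigl((x-A)^{-1}\bigr)$, so $\int_{\eta_0}^{\eta}$ of it is $O\bigl(|\ln(\eta-A)|\bigr)$, and since $\eta^{-2-1/\gamma}\to A^{-2-1/\gamma}$ we get $\hat w_1(\eta)=O\bigl(|\ln(\eta-A)|\bigr)=o\bigl((\eta-A)^{-1}\bigr)$, and similarly $\hat w_1'=o\bigl((\eta-A)^{-2}\bigr)$, $\hat w_1''=o\bigl((\eta-A)^{-3}\bigr)$. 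For $\hat w_2$, with the chosen normalization $\hat w_2(\eta)=-\eta^{-2-1/\gamma}\int_{\eta}^{+\infty}\gamma^{-1}f_2(x)\,x^{1+1/\gamma}\,dx$, I would split the integrand as $-\tfrac{(n-1)}{\gamma}A^{1+1/\gamma}(x-A)^{-2}$ plus a remainder which is $O\bigl((x-A)^{-1}\bigr)$ near $x=A$ and integrable at infinity; the principal part integrates to $\tfrac{(n-1)}{\gamma}A^{1+1/\gamma}(\eta-A)^{-1}+O(1)$ as $\eta\to A^+$ while the remainder contributes only $O\bigl(|\ln(\eta-A)|\bigr)$, so multiplying by $-\eta^{-2-1/\gamma}\to-A^{-2-1/\gamma}$ gives $\hat w_2(\eta)=\tfrac{n-1}{\gamma A}(\eta-A)^{-1}+o\bigl((\eta-A)^{-1}\bigr)$. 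Adding, $\hat h=\hat w_1+\theta\,\hat w_2=\tfrac{\theta}{\gamma A}\tfrac{n-1}{\eta-A}+o\bigl((\eta-A)^{-1}\bigr)$, which is the first line of the statement.

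Finally, for the two derivative estimates I would use the ODE: taking the combination of \eqref{eq-linearizationouter} corresponding to \eqref{eqn-dh} gives $\gamma\eta\,\hat h'+(1+2\gamma)\hat h=f_1+\theta f_2$, hence
$$\hat h'(\eta)=\frac{1}{\gamma\eta}\Bigl(\theta f_2(\eta)+f_1(\eta)-(1+2\gamma)\hat h(\eta)\Bigr).$$
Since $f_1(\eta)$ and $\hat h(\eta)$ are both $O\bigl((\eta-A)^{-1}\bigr)=o\bigl((\eta-A)^{-2}\bigr)$, while $\theta f_2(\eta)=-\theta(n-1)(\eta-A)^{-2}\bigl(1+o(1)\bigr)$ and $\gamma\eta=\gamma A\bigl(1+o(1)\bigr)$, this gives $\hat h'(\eta)=-\tfrac{\theta}{\gamma A}\tfrac{n-1}{(\eta-A)^2}+o\bigl((\eta-A)^{-2}\bigr)$. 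Differentiating the relation once more, $\gamma\eta\,\hat h''=f_1'+\theta f_2'-(1+3\gamma)\hat h'$; now $f_1'$ and $\hat h'$ are $O\bigl((\eta-A)^{-2}\bigr)=o\bigl((\eta-A)^{-3}\bigr)$ and $\theta f_2'=2\theta(n-1)(\eta-A)^{-3}\bigl(1+o(1)\bigr)$, so $\hat h''(\eta)=\tfrac{2\theta}{\gamma A}\tfrac{n-1}{(\eta-A)^3}+o\bigl((\eta-A)^{-3}\bigr)$, which is the last line. The only genuinely delicate point is the error control in the integral defining $\hat w_2$: one must check that replacing $f_2(x)x^{1+1/\gamma}$ by its principal singular part really leaves an error of size $O\bigl((x-A)^{-1}\bigr)$ (so that after integration it is $o\bigl((\eta-A)^{-1}\bigr)$) and that the tail $\int_{A+\delta}^{+\infty}$ is a harmless constant; everything else is routine bookkeeping with the explicit power functions in \eqref{eqn-f12-2}.
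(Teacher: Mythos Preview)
Your proof is correct and reaches the same conclusions as the paper, but the route differs in two places worth noting.

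For the leading asymptotic of $\hat h$, the paper treats $\hat h=\hat w_1+\theta\hat w_2$ as a single object: it records that $(f_1+\theta f_2)(\eta)=-\theta(n-1)(\eta-A)^{-2}+O((\eta-A)^{-1})$ and then applies L'H\^opital's rule to the product $(\eta-A)\int_{\eta_0}^{\eta}\gamma^{-1}(f_1+\theta f_2)(x)\,x^{1+1/\gamma}\,dx$, rather than splitting $\hat w_1$ and $\hat w_2$ and integrating the principal part of the $\hat w_2$ integrand by hand as you do. Your splitting is slightly more explicit but requires you to track the tail $\int_{A+\delta}^{\infty}$ separately; L'H\^opital sidesteps that bookkeeping. (Incidentally, there is a sign slip in your intermediate step: the principal part of $\int_\eta^\infty\gamma^{-1}f_2(x)x^{1+1/\gamma}\,dx$ is $-\tfrac{n-1}{\gamma}A^{1+1/\gamma}(\eta-A)^{-1}+O(1)$, not $+$; the final outer sign then gives the correct positive leading coefficient for $\hat w_2$.)

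For the derivatives, the paper differentiates the integral representation \eqref{eq-tildew_1} and combines this with the asymptotics of $(f_1+\theta f_2)'$. Your approach of reading $\hat h'$ and $\hat h''$ directly off the ODE $\gamma\eta\,\hat h'+(1+2\gamma)\hat h=f_1+\theta f_2$ and its derivative is cleaner: it avoids differentiating under the integral sign entirely and makes transparent that each derivative is one order more singular because $f_2$ (and then $f_2'$) dominates. Either method works, but yours is the more economical of the two.
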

\begin{proof} 
By \eqref{eq-tildew_i} we have 
\begin{equation} \label{eq-tildew_1}
\hat h (\eta)= \fr{\eta_0^{2+\fr{1}{\gamma}}\big (\tw_1+\theta \tw_2\big ) (\eta_0)}{\eta^{2+\fr{1}{\gamma}}}+ \fr{1}{\eta^{2+\fr{1}{\gamma}}}\int_{\eta_0}^{\eta} \fr{\big (f_1+\theta f_2\big )(x)}{\gamma}x^{1+\fr{1}{\gamma}}dx.
\end{equation}
Now, by Taylor's theorem, \eqref{eqn-f12-2} and derivatives of these equations imply the following behavior as $\eta \to A^+$ 
 $$(f_1+\theta f_2)(\eta)= -\fr{\theta(n-1)}{(\gamma -A)^2}+O((\eta-A)^{-1})$$ and $$(f_1+\theta f_2)'(\eta)= 2\fr{\theta(n-1)}{(\gamma -A)^3}+O((\eta-A)^{-2}).$$ In particular we see from the above that $\hat w_2$ dominates as $\eta \to A^+$ and by  L'H\^{o}pital's rule on $(\eta-A)\int_{\eta_0}^{\eta} \eta^{-1}{\big (f_1+\theta f_2\big )(x)}x^{1+\fr{1}{\gamma}}dx$ we obtain 
 $$\lim_{\eta\to A+} (\eta-A) \hat h(\eta)=\fr{(n-1)\theta}{\gamma A}.$$
Similarly,  taking derivatives in   \eqref{eq-tildew_1} and using the asymptotics for $(f_1+\theta f_2)$ and $(f_1+\theta f_2)'$,
we obtain 
 $$\lim_{\eta\to A+}(\eta-A)^2 \hat h'(\eta)=- \fr{(n-1)\theta}{\gamma A},\quad  \lim_{\eta\to A+} (\eta-A)^3
 \hat h''(\eta)=\fr{2(n-1)\theta}{\gamma A}.$$

\end{proof}

\begin{remark} Lemma \ref{prop-tildew1at1} shows that  $\tw(\eta,\tau)$ defined by \eqref{eqn-hw2}, 
which is the ansatz for super and sub solutions,  for $\theta>\fr{n-6}{4}$ and $\theta<\fr{n-6}{4}$ respectively, predicts the correct lower order asymptotic behavior in the outer region which is then matched with the inner region where a steady soliton comes in. Indeed, in our previous 
work \cite{CD} we showed  that a  steady soliton of \eqref{eq-flatyamabe} satisfies the asymptotics
\be\label{eq-solitonasymptotics}\bar w (\xi) = \fr{(n-1)(n-2)}{\gamma A} \xi + \kappa+ \fr{(n-1)(n-6)}{4\gamma A} \fr{1}{\xi}+o\Bigl(\fr{1}{\xi}\Bigr).\ee
On the other hand, Lemma \ref{prop-tildew1at1} yields 
\bee\tw(\eta,\tau) = \fr{(n-1)(n-2)}{\gamma A} \big (\eta-A \big ) + e^{-2\gamma\tau}\, \fr{(n-1)\theta}{\gamma A}\fr{1}{\eta-A}+
O \big ((\eta-A)^2\big). \eee
Hence, if we match  the inner-outer variables by setting   $\eta= A + \xi e^{-\gamma\tau}$,  we obtain  
\bee
e^{\gamma\tau}\tw(\eta,\tau) = \fr{(n-1)(n-2)}{\gamma A} \, \xi + \fr{(n-1)\theta}{\gamma A}\, \fr{1}{\xi}+O\Bigl((\xi^2+1)e^{-\gamma\tau}\Bigr).\eee
Also, this suggests that in the  inner region  $\bar w$ should be $\bar w_0$, the translating soliton which satisfies asymptotics \eqref{eq-solitonasymptotics} with  $\kappa=0$. 
\end{remark}

We  will next see that behavior of  $\big ( \tw_1+\theta \tw_2  \big )(\eta)$ as $\eta \to \infty$  is governed by $\tw_1$.

\begin{lemma}\label{prop-tildew1infty} For any linear combination $\hat h:= \tw_1+ \theta \,  \tw_2$ of the solutions $\tw_1, \tw_2$
chosen above we have, we have  
\bee
\begin{split} &\hat h (\eta)\,   =\fr{(n-1)(1+\gamma)}{\gamma^3} A^{\fr{1}{\gamma}}\eta^{-\fr{1}{\gamma}-2}\ln \eta+o(\eta^{-\fr{1}{\gamma}-2}\ln \eta)\\
&\hat h' (\eta) =-\fr{(n-1)(1+\gamma)(1+2\gamma)}{\gamma^4}A^{\fr{1}{\gamma}}\eta^{-\fr{1}{\gamma}-3} \ln \eta+o(\eta^{-\fr{1}{\gamma}-3}\ln \eta) \\
&\hat h'' (\eta) =\fr{(n-1)(1+\gamma)(1+2\gamma)(1+3\gamma)}{\gamma^5}A^{\fr{1}{\gamma}}\eta^{-\fr{1}{\gamma}-4} \ln \eta +o(\eta^{-\fr{1}{\gamma}-4}\ln \eta)
\end{split}
\eee
as $\eta \to \infty$.
\end{lemma}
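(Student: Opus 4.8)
The plan is to read off the $\eta\to\infty$ behavior of $\hat h=\hat w_1+\theta\hat w_2$ directly from the integral representation \eqref{eq-tildew_i}, using the explicit formulas \eqref{eqn-f12-2} for $f_1$ and $f_2$, and then to obtain the first and second derivative asymptotics \emph{not} by differentiating the integral but by repeatedly differentiating the linear ODE \eqref{eq-linearizationouter}.

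First I would record the large-$x$ behavior of the integrands appearing in \eqref{eq-tildew_i}. From \eqref{eqn-f12-2},
\[
f_1(x)\,x^{1+\fr1\gamma}=\fr{(n-1)(1+\gamma)}{\gamma^2}\,\fr{x^{-1}}{A^{-1/\gamma}-x^{-1/\gamma}}=\fr{(n-1)(1+\gamma)}{\gamma^2}A^{\fr1\gamma}\,x^{-1}\bigl(1+O(x^{-1/\gamma})\bigr),
\]
while $f_2(x)\,x^{1+\fr1\gamma}=O(x^{-1/\gamma-1})$. The $O(x^{-1/\gamma})$ relative correction in the first expression is integrable at infinity, so $\int_{\eta_0}^{\eta}\gamma^{-1}f_1(x)x^{1+1/\gamma}\,dx=\fr{(n-1)(1+\gamma)}{\gamma^3}A^{\fr1\gamma}\ln\eta+O(1)$. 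Dividing by $\eta^{2+1/\gamma}$, and observing that the boundary term $\eta_0^{2+1/\gamma}\hat w_1(\eta_0)\,\eta^{-2-1/\gamma}$ in \eqref{eq-tildew_i} carries no logarithm, gives $\hat w_1(\eta)=\fr{(n-1)(1+\gamma)}{\gamma^3}A^{\fr1\gamma}\eta^{-1/\gamma-2}\ln\eta+O(\eta^{-1/\gamma-2})$. On the other hand, since $f_2(x)x^{1+1/\gamma}=O(x^{-1/\gamma-1})$ its tail integral is $O(\eta^{-1/\gamma})$, so $\hat w_2(\eta)=-\eta^{-2-1/\gamma}\int_{\eta}^{\infty}\gamma^{-1}f_2(x)x^{1+1/\gamma}\,dx=O(\eta^{-2/\gamma-2})=o(\eta^{-1/\gamma-2}\ln\eta)$. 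Hence $\theta\hat w_2$ does not affect the leading order, and the first claimed estimate follows.

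For the derivatives I would use the equation itself rather than differentiate the integral. Rewriting \eqref{eq-linearizationouter} for the combination $\hat h$,
\[
\hat h'(\eta)=-\fr{1+2\gamma}{\gamma}\,\eta^{-1}\hat h(\eta)+\fr{1}{\gamma\eta}\bigl(f_1+\theta f_2\bigr)(\eta).
\]
Here $\eta^{-1}(f_1+\theta f_2)=O(\eta^{-1/\gamma-3})$ carries no logarithm, so it is negligible against $\eta^{-1}\hat h\sim\eta^{-1/\gamma-3}\ln\eta$; substituting the first estimate and using $\fr{1+2\gamma}{\gamma}=2+\fr1\gamma$ gives the second line. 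Differentiating the displayed relation once more,
\[
\hat h''(\eta)=-\fr{1+2\gamma}{\gamma}\,\eta^{-1}\hat h'(\eta)+\fr{1+2\gamma}{\gamma}\,\eta^{-2}\hat h(\eta)+\fr{d}{d\eta}\Bigl(\fr{f_1+\theta f_2}{\gamma\eta}\Bigr),
\]
where the last term is $O(\eta^{-1/\gamma-4})$ and again negligible. Inserting the asymptotics of $\hat h$ and $\hat h'$ established above, and using $\fr{1+2\gamma}{\gamma}\bigl(1+\fr{1+2\gamma}{\gamma}\bigr)=\fr{(1+2\gamma)(1+3\gamma)}{\gamma^2}$, produces the third line.

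The only genuine work is the error bookkeeping: one must check that the relative $O(x^{-1/\gamma})$ corrections in $f_1x^{1+1/\gamma}$ integrate to a bounded (not logarithmically growing) contribution, that the $\hat w_2$-term and the boundary terms in \eqref{eq-tildew_i} genuinely sit below the threshold $\eta^{-1/\gamma-2}\ln\eta$, and --- the crux of the matter --- that at each differentiation the inhomogeneity $f_1+\theta f_2$ is smaller than the term produced by $\hat h$ by exactly a factor $\ln\eta$, so that it can be absorbed into the $o(\cdot)$. Once that is in place the constants fall out of elementary algebra, and I do not expect any structural obstacle.
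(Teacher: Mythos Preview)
Your proposal is correct and follows essentially the same route as the paper. The paper's proof also extracts the leading behavior of $\hat h$ from the integral representation \eqref{eq-tildew_i} together with the expansions of $f_1,f_2$ at infinity, and obtains $\hat h',\hat h''$ by differentiating \eqref{eq-tildew_1}; since differentiating that integral formula reproduces precisely the ODE $\gamma\eta\hat h'+(1+2\gamma)\hat h=f_1+\theta f_2$, your explicit use of the ODE is just a cleaner rephrasing of the same step, and both arguments require the same control on $f_1',f_2'$ that you flag in your final paragraph.
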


\begin{proof}
The proof is similar as in Lemma  \ref{prop-tildew1at1} if we check the asymptotics 
\bee
\begin{split}\label{eq-fexpansioninfty}
f_1(\eta)&=\fr{(n-1)(1+\gamma)}{\gamma^2}\, \big (A^{\fr{1}{\gamma}}\eta^{-\fr{1}{\gamma}-2}+A^{\fr{2}{\gamma}}x^{-\fr{2}{\gamma}-2}\big )+ o(x^{-\fr{2}{\gamma}-2})\\
f_2(\eta)&=-\fr{(n-1)}{\gamma^2}\big (A^{\fr{2}{\gamma}} \eta^{-\fr{2}{\gamma}-2}+2A^{\fr{3}{\gamma}}x^{-\fr{3}{\gamma}-2}\big )+ o(\eta^{-\fr{3}{\gamma}-2})
\end{split}
\eee and corresponding asymptotics for $f_1' $ and $f_2'$
as $ \eta \to\infty$.  

\end{proof}

In the next lemma we give   more  precise asymptotics   which will be used later when we have to add higher order terms 
to barrier in the case $0< \gamma \le \fr{1}{2}$.  Notice that $\eta^{- \frac 1\gamma -2}$ is a solution of the homogenous equation of \eqref{eq-linearizationouter} and we have  constants $C$  in the lemma below since we haven't chosen a specific $\tw_1$.

\begin{lemma}\label{remark-preciseasymp}
For any linear combination $\hat h:= \tw_1+\theta  \tw_2$ of the solutions $\tw_1, \tw_2$
chosen above, we have  
\bee
 \begin{split} \label{eq-11} &\hat h(\eta) =+\fr{(n-1)(1+\gamma)}{\gamma^3} A^{\fr{1}{\gamma}}\eta^{-\fr{1}{\gamma}-2}\ln \eta +C\eta^{-\fr{1}{\gamma}-2} \\
&\qquad  \quad -(n-1)\fr{(n-1)(1+\theta+\gamma)}{\gamma^2}A^{\fr{2}{\gamma}} \gamma ^{-\fr{2}{\gamma}-2}+o(\eta^{-\fr{2}{\gamma}-2})\\
&\hat h' (\eta) =-\fr{(n-1)(1+\gamma)(1+2\gamma)}{\gamma^4}A^{\fr{1}{\gamma}}\eta^{-\fr{1}{\gamma}-3} \ln \eta\\ &
\qquad \quad +C' \eta^{-\fr{1}{\gamma}-3}+o(\eta^{-\fr{2}{\gamma}-2})\\
&\hat h'' (\eta) =+\fr{(n-1)(1+\gamma)(1+2\gamma)(1+3\gamma)}{\gamma^5}A^{\fr{1}{\gamma}}\eta^{-\fr{1}{\gamma}-4} \ln \eta\\ & \qquad \quad +C'' \eta^{-\fr{1}{\gamma}-4}+o(\eta^{-\fr{2}{\gamma}-2})
\end{split}
\eee
as $\eta \to +\infty$. Here, $C$, $C'$, and $C''$ are constants depending on the choice of $\tw_1$.
 \end{lemma}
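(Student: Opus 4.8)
The plan is to establish the asymptotic expansions in Lemma~\ref{remark-preciseasymp} by pushing the integral representation \eqref{eq-tildew_i} to one more order of accuracy than was used in Lemma~\ref{prop-tildew1infty}. First I would record the exact expansions of $f_1$ and $f_2$ near $\eta = +\infty$ obtained by expanding the geometric-type factor $(A^{-1/\gamma} - \eta^{-1/\gamma})^{-1} = A^{1/\gamma}(1 + A^{1/\gamma}\eta^{-1/\gamma} + O(\eta^{-2/\gamma}))$, which from \eqref{eqn-f12-2} yields
\[
(f_1 + \theta f_2)(\eta) = \frac{(n-1)(1+\gamma)}{\gamma^2}\,A^{1/\gamma}\,\eta^{-\frac1\gamma-2} + \frac{(n-1)(1+\theta+\gamma)}{\gamma^2}\,A^{2/\gamma}\,\eta^{-\frac2\gamma-2} + o\big(\eta^{-\frac2\gamma-2}\big),
\]
together with the matching expansions for the derivatives (differentiate termwise, which is legitimate since these are analytic in $\eta^{-1/\gamma}$). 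Then I would substitute into \eqref{eq-tildew_1} and evaluate the antiderivatives $\int^\eta x^{1+\frac1\gamma}\cdot x^{-\frac1\gamma-2}\,dx = \ln\eta + \text{const}$ and $\int^\eta x^{1+\frac1\gamma}\cdot x^{-\frac2\gamma-2}\,dx = -\gamma\, A^{?}\eta^{-1/\gamma}/1 + \text{const}$ — more precisely $\int x^{-\frac1\gamma -1}dx = -\gamma\,\eta^{-1/\gamma}$ up to a constant; combined with the prefactor $\eta^{-2-1/\gamma}$ this produces the three displayed terms: the $\ln\eta$ term, a bare $C\eta^{-\frac1\gamma-2}$ coming from the constant of integration plus the boundary term $\eta_0^{2+1/\gamma}\hat h(\eta_0)/\eta^{2+1/\gamma}$, and the $\eta^{-\frac2\gamma-2}$ term from the second piece of $f_1+\theta f_2$.

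For the derivatives of $\hat h$ I would differentiate \eqref{eq-tildew_1} directly rather than re-deriving an integral formula: $\hat h'(\eta)$ picks up $-(2+\tfrac1\gamma)\eta^{-1}$ times the leading behavior of $\hat h$ plus $\eta^{-1}\cdot\gamma^{-1}(f_1+\theta f_2)(\eta)$, and since $(f_1+\theta f_2)$ decays like $\eta^{-\frac1\gamma-2}$ which is of lower order than $\eta^{-1}\hat h \sim \eta^{-\frac1\gamma-3}\ln\eta$, the derivative is governed by $-(2+\tfrac1\gamma)\eta^{-1}\hat h$; carrying the logarithmic and the $C\eta^{-\frac1\gamma-2}$ terms through gives the stated coefficient $-(n-1)(1+\gamma)(1+2\gamma)\gamma^{-4}A^{1/\gamma}$ for the $\ln\eta$ part and a new constant $C'$ (which one could in principle express in terms of $C$, but the statement only asserts existence of such a constant). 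Iterating once more for $\hat h''$ produces the coefficient $(n-1)(1+\gamma)(1+2\gamma)(1+3\gamma)\gamma^{-5}A^{1/\gamma}$ and a constant $C''$. The bookkeeping of error terms requires checking that the $o(\eta^{-\frac2\gamma-2})$ remainders in $f_i$ integrate to $o(\eta^{-\frac2\gamma-2})$ after multiplication by $\eta^{-2-1/\gamma}$ — this is immediate because $\int^\eta x^{1+\frac1\gamma} o(x^{-\frac2\gamma-2})\,dx = o(\eta^{-\frac1\gamma})$ when $\tfrac1\gamma > 0$, so that the product is $o(\eta^{-2-\frac2\gamma})$.

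The main obstacle, and the only genuinely delicate point, is the interaction between the two scales $\eta^{-1/\gamma}$ and $\eta^{-1}$: which of $\eta^{-\frac2\gamma-2}$ versus the next correction to the $\ln\eta$ term (namely $\eta^{-\frac1\gamma-2}$ times lower-order logarithmic corrections, or the $C'''\eta^{-\frac1\gamma-2}$ itself) is actually the subleading term depends on whether $\gamma \gtrless 1$ — for $\gamma > 1$ one has $\eta^{-\frac2\gamma-2} \gg \eta^{-\frac1\gamma-2}\cdot(\text{worse than }\ln)$ is false; in fact $\eta^{-\frac1\gamma-2} \gg \eta^{-\frac2\gamma-2}$ always, so the term $C\eta^{-\frac1\gamma-2}$ genuinely dominates the $A^{2/\gamma}\eta^{-\frac2\gamma-2}$ term and the expansion as written lists terms in decreasing order only up to the ambiguity in the undetermined constant $C$. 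I would therefore be careful to state (as the lemma does) that $C,C',C''$ are \emph{unspecified} constants absorbing both the homogeneous-solution ambiguity and the subleading resonant contributions, and verify that with this convention every remainder is honestly $o(\eta^{-\frac2\gamma-2})$; the case $\gamma=\tfrac12$, where $\eta^{-\frac2\gamma-2}=\eta^{-6}$ might coincide with other terms, should be checked but causes no real trouble since $\eta^{-\frac1\gamma-2}=\eta^{-4}\ne\eta^{-6}$. Once the expansions of $f_1+\theta f_2$ and their first two derivatives are in hand, the rest is the same L'Hôpital / direct-integration argument already used in Lemmas~\ref{prop-tildew1at1} and \ref{prop-tildew1infty}, applied one order deeper.
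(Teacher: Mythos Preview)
Your approach is correct and is precisely what the paper intends: its proof of Lemma~\ref{remark-preciseasymp} consists of the single line ``Can be shown in the same manner as in Lemma~\ref{prop-tildew1infty},'' and your proposal carries out exactly that program---expanding $f_1+\theta f_2$ to one further order via the geometric series for $(A^{-1/\gamma}-\eta^{-1/\gamma})^{-1}$, integrating against the kernel in \eqref{eq-tildew_i}, and differentiating. Your observation that the constants $C,C',C''$ absorb the homogeneous solution $\eta^{-\frac1\gamma-2}$ (coming from both the boundary term and the constants of integration) is the right way to read the statement.
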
 

\begin{proof} Can be shown in the same manner as in Lemma \ref{prop-tildew1infty}. 

\end{proof}
We will now show that $\hat w(\eta,\tau)$ given by \eqref{eqn-hw2}  is a sub or  super - solution  of equation 
\eqref{eq-tildew} in the appropriate regions.   We will first deal  with the case of parameters $\gamma > 1/2$. The case
$\gamma \leq 1/2$ is more delicate and will be considered later.

\begin{prop}\label{prop-outer} For any $\gamma>1/2$ and given $\theta \neq \fr{n-6}{4}$, there exist $\tau_0\in \R$ and $\xi_0>0$ depending on $n$, $A$, $\gamma$ and $\theta$ such that  the function 
$${ \tw}(\eta,\tau) := \hat w_0(\eta) + e^{-2\gamma\tau}(\hat w_1(\eta)+\theta \tw_2(\eta))$$ is a subsolution 
of equation  \eqref{eq-tildew}  if $\theta < \fr{n-6}{4}$ or a supersolution if $\theta > \fr{n-6}{4}$, respectively, 
in the region  $$\{ (\eta,\tau) \, | \, \eta \ge A + \xi_0 \, e^{-\gamma \tau}, \,\, \tau\ge\tau_0 \}.$$
\end{prop}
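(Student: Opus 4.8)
The plan is to verify directly that the sign of $B[\hat w]$ is determined by the sign of $\theta - \frac{n-6}{4}$ throughout the stated region, using the identity \eqref{eq-Bw}. Starting from
\[
\frac{e^{2\gamma\tau}}{n-1}B[\hat w] = \Bigl(\frac{(\hat w_0)_{\eta\eta}}{\hat w_0} + \theta\,\frac{(\hat w_0)_\eta^2}{\hat w_0^2}\Bigr) - \Bigl(\frac{\hat w_{\eta\eta}}{\hat w} + \frac{n-6}{4}\frac{\hat w_\eta^2}{\hat w^2}\Bigr),
\]
I would substitute $\hat w = \hat w_0 + e^{-2\gamma\tau}\hat h$ with $\hat h = \hat w_1 + \theta\hat w_2$ and expand each quotient to first order in $\varepsilon := e^{-2\gamma\tau}$. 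Because $\hat w_1,\hat w_2$ were chosen to solve \eqref{eq-linearizationouter}, the $O(\varepsilon)$ terms coming from the linearization of $\hat w_{\eta\eta}/\hat w$ (with coefficient $1$) and of the $\hat w_\eta^2/\hat w^2$ term (with coefficient $\frac{n-6}{4}$) are designed to cancel against $f_1 + \frac{n-6}{4}f_2$... but they don't cancel exactly: the ansatz uses $\theta$ in the second slot of the first parenthesis while the evolution uses $\frac{n-6}{4}$. The key computation is to isolate the leading surviving term, which should be
\[
\frac{e^{2\gamma\tau}}{n-1}B[\hat w] = \Bigl(\theta - \frac{n-6}{4}\Bigr)\frac{(\hat w_0)_\eta^2}{\hat w_0^2} + e^{-2\gamma\tau}\cdot(\text{remainder}),
\]
and to show the remainder is genuinely lower order in the region under consideration.

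The main work, and the main obstacle, is controlling that remainder uniformly over the full region $\eta \ge A + \xi_0 e^{-\gamma\tau}$, $\tau \ge \tau_0$, which stretches from $\eta$ near $A$ (where $\hat w_0 \to 0$ and $f_2$ blows up like $(\eta-A)^{-2}$) out to $\eta \to \infty$ (where $\hat w_0 \to (n-1)(n-2)$ and everything decays in $\eta$). I would split the analysis into two subregions. Near $\eta = A$: using Lemma \ref{prop-tildew1at1}, $\hat h \sim \frac{\theta(n-1)}{\gamma A}(\eta-A)^{-1}$ and $\hat w_0 \sim \frac{(n-1)(n-2)}{\gamma A}(\eta-A)$, so the perturbation term $\varepsilon\hat h/\hat w_0$ is of size $\varepsilon(\eta - A)^{-2}$; this stays small precisely when $(\eta - A)^2 \gg e^{-2\gamma\tau}$, i.e. when $\eta - A \ge \xi_0 e^{-\gamma\tau}$ with $\xi_0$ large — this is exactly why the region is cut off at $A + \xi_0 e^{-\gamma\tau}$. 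Here I would expand all three quotients keeping track of the dominant $(\eta-A)^{-2}$ behaviour and check that the sign is controlled by $(\theta - \frac{n-6}{4})\frac{(\hat w_0)_\eta^2}{\hat w_0^2} > 0$ or $< 0$, the perturbation being a lower-order multiple of the same singular quantity once $\xi_0$ is large enough.

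Away from $\eta = A$ (say $\eta \ge 2A$ or $\eta - A \ge \delta$ for fixed $\delta$), $\hat w_0$ is bounded away from $0$, $f_1, f_2$ and $\hat h$ all decay polynomially in $\eta$ (Lemmas \ref{prop-tildew1infty}, \ref{remark-preciseasymp}), and $\varepsilon = e^{-2\gamma\tau} \le e^{-2\gamma\tau_0}$ is uniformly small, so the remainder is $O(\varepsilon)$ times bounded quantities while the main term $(\theta - \frac{n-6}{4})(\hat w_0)_\eta^2/\hat w_0^2$ is bounded below in absolute value by a positive multiple of $\eta^{-2/\gamma - 2}$ — comparable to or larger than the error $\varepsilon \hat h^2/\hat w_0^2 \sim \varepsilon\,\eta^{-2/\gamma-4}(\ln\eta)^2$ — provided $\tau_0$ is large. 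Collecting the two cases, choosing $\xi_0$ large first (to handle the inner edge) and then $\tau_0$ large (to handle everything else), gives $B[\hat w] \le 0$ when $\theta < \frac{n-6}{4}$ and $B[\hat w] \ge 0$ when $\theta > \frac{n-6}{4}$ on the whole region, which is the claim. The delicate bookkeeping is matching the powers of $(\eta - A)$ versus $e^{-\gamma\tau}$ near the inner edge; the restriction $\gamma > 1/2$ enters because it ensures $e^{-2\gamma\tau}$ decays fast enough that no additional correction term (needed in Proposition \ref{prop-outer2} for $\gamma \le 1/2$) is required to absorb the sub-leading contributions of $\hat h$.
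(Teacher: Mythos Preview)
Your proposal is correct and follows essentially the same approach as the paper: both split the region into a neighborhood $A+\xi_0 e^{-\gamma\tau}\le\eta<A+\delta$ (handled via Lemma~\ref{prop-tildew1at1}, with $\xi_0$ large so that $e^{-2\gamma\tau}\hat h/\hat w_0$ stays small) and the far region $\eta\ge A+\delta$ (handled via Lemma~\ref{prop-tildew1infty}, with $\tau_0$ large), and in each piece show the remainder is dominated by $(\theta-\tfrac{n-6}{4})(\hat w_0)_\eta^2/\hat w_0^2$. One small correction: the reason $\gamma>1/2$ is needed is not the $\tau$-decay rate of $e^{-2\gamma\tau}$ per se, but rather that the ratio of the dominant error term to the main term behaves like $e^{-2\gamma\tau}\eta^{1/\gamma-2}\ln\eta$ as $\eta\to\infty$, and only when $1/\gamma<2$ is the $\eta$-factor bounded on $\eta>A+\delta$, allowing a single choice of $\tau_0$ to work uniformly.
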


\begin{proof}[Proof of Proposition \ref{prop-outer}]
As before,  let us denote by  $$\hat h:=\tw_1+\theta \tw_2.$$
We need to show that 
\be\label{eqn-bw10}
B[\tw]<0, \,\,\, \text{ if }\theta<\fr{n-6}{4}\quad\text{ or }\quad B[\tw]>0,\,\,\, \text{ if }\theta>\fr{n-6}{4}
\ee
holds in the region 

Proposition  \ref{prop-outer} follows from the the two claims  below 

\begin{claim}\label{lem-outer1}
For any $\gamma>0$ and given ${ \theta\neq \fr{n-6}{4}}$, there exist  $\xi_0 >0$ and $\delta>0$ such that $ { \bw}(\eta,\tau)$ is a subsolution of equation 
\eqref{eq-tildew}  if ${ \theta<\fr{n-6}{4}}$   or  a supersolution    if  ${ \theta>\fr{n-6}{4}}$, respectively,  in  
the  region $$\{(\eta,\tau) \,  |\,   A+\xi_0\,  e^{-\gamma\tau} \leq \eta< A+\delta, \, \tau \in \R \, \}.$$
\end{claim} 

\begin{proof}[Proof of Claim \ref{lem-outer1}] 
By  Lemma  \ref{prop-tildew1at1}, we may find  $\kappa=\kappa(n,A, \gamma)>0$ such that 
\begin{equation}|\hat h|  (\eta-A),\ |\hat h'|  (\eta-A)^2,\ |\hat h''|  (\eta-A)^3 < \kappa \, |\theta|\end{equation}
holds on the region $A < \eta < A+1$. Moreover,  by  Taylor's theorem we may choose the constant $\kappa$ so that 
\bee
\Bigl |\hat w_0 (\eta)-\fr{(n-1)(n-2)}{\gamma A}(\eta-A)\Bigr|<\kappa\, (\eta-A)^2 
\eee and
\bee \Bigl|\tw_0'(\eta)-\fr{(n-1)(n-2)}{\gamma A}\Bigr|<\kappa\, (\eta-A) \quad \mbox{and} \quad 
\Bigl|\tw_0''(\eta)\Bigr|<\kappa \eee
hold. Using these, we get
\bee
\begin{split}\label{eq-lem1}
\Bigl| \tw-\fr{(n-1)(n-2)}{\gamma A}(\eta-A)\Bigr| &=\Bigl |\tw_0-\fr{(n-1)(n-2)}{\gamma A}(\eta-A)+e^{-2\gamma\tau}\hat h\Bigr |\\
&\le \kappa \, (\eta-A)^2+\kappa \, |\theta|\, e^{-2\gamma\tau}\fr{1}{(\eta-A)}\\&=\Bigl (\kappa(\eta-A)+\fr{\kappa\, |\theta|}{((\eta-A)e^{\gamma\tau})^2}\Bigr )\, (\eta-A)
\\&\le \kappa\, \Bigl (\delta+\fr{|\theta|}{\xi_0^2}\Bigr ) \, (\eta-A). 
\end{split}
\eee
Hence by restricting to $0<\delta<1$ small and $\xi_0>0$ large, we may assume that $$\tw> \fr{(n-1)(n-2)}{2\gamma A}\, (\eta-A)>0.$$
Using  the above we find that in the considered region we have 
\bee
\begin{split}\label{eq-lem2}
\Bigl|\fr{\tw_{\eta\eta}}{\tw}\Bigr|\le\fr{|\tw_0''|+|e^{-2\gamma\tau} \hat h''|}{\tw}&\le \fr{1}{(\eta-A)}\fr{2\gamma A}{(n-1)(n-2)}\big (\kappa+\fr{|\theta|\kappa}{(\eta-A)^3e^{2\gamma\tau}}\big )\\&\le\fr{\kappa}{(\eta-A)^2}\fr{2\gamma A}{(n-1)(n-2)}
\Bigl(\delta + \fr{|\theta|}{\xi_0^2}\Bigr)\end{split}\eee
and
\bee
\begin{split}\label{eq-lem3}
\Bigl|\fr{\tw_{\eta}^2}{\tw^2}\Bigr|&=\fr{|\tw_0'|^2|1+e^{-2\gamma\tau} \hat h'/\tw_0'|^2}{\tw^2}\\&\le\fr{1}{(\eta-A)^2}\Bigl|  \fr{\tw_0'^2}  {\tw^2\,  (\eta-A)^{-2}}\Bigl(1+\fr{|\theta|\kappa}{(\eta-A)^2e^{2\gamma\tau}\tw_0'}\Bigr)^2 \Bigr|\\
&\le\fr{1}{(\eta-A)^2}\Bigl( \fr{\fr{(n-1)(n-2)}{\gamma A} +\kappa \delta}{\fr{(n-1)(n-2)}{\gamma A}-\kappa\big (\delta+\fr{|\theta|}{\xi_0^2}\big )}\Bigr)^2\Bigl(1+\fr{|\theta|\kappa}{\xi_0^2\big (\fr{(n-1)(n-2)}{\gamma A}-\kappa\delta\big )}\Bigr )^2.\end{split}
\eee
Similarly,  we estimate from below \bee
\label{eq-lem4}
\Bigl |\fr{\tw_{\eta}^2}{\tw^2}\Bigr|\ge\fr{1}{(\eta-A)^2}\Bigl( \fr{\fr{(n-1)(n-2)}{\gamma A}-\kappa\delta}{\fr{(n-1)(n-2)}{\gamma A}+\kappa\big (\delta+\fr{|\theta|}{\xi_0^2}\big )}\Bigr)\Bigl(1-\fr{|\theta|\kappa}{\xi_0^2\, \big (\fr{(n-1)(n-2)}{\gamma 
A}-\kappa\delta\big)}\Bigr  )^2.
\eee
Thus,  for a fixed $\e>0$ to be determined later,  we may find small $\delta >0$ and large $\xi_0>0$ so that all 
\bee\begin{aligned}
\Bigl| \fr{\tw_{\eta}^2}{\tw^2}-\fr{1}{(\eta-A)^2}\Bigr|, \,\, \Bigl| \fr{\tw_0'^2}{\tw_0^2}-\fr{1}{(\eta-A)^2}\Bigr|, \,\, \Bigl| \fr{\tw_{\eta\eta}}{\tw}\Bigr|,\,\, \Bigl| \fr{\tw_0''}{\tw_0}\Bigr|\le \fr{\e}{(\eta-A)^2}.
\end{aligned} 
\eee
Recalling  the formula  for $B[\tw]$ in  \eqref{eq-Bw} and using  the  triangle inequality  successively  we  obtain 
\bee
\Bigl|\fr{e^{2\gamma\tau}B[\tw]}{(n-1)}-\Bigl(\theta-\fr{n-6}{4}\Bigr)\fr{1}{(\eta-A)^2}\Bigr|\le \e
\fr{|\theta|+\Bigl|\fr{n-6}{4}\Bigr|+2}{(\eta-A)^2}.\eee
Finally, by   choosing  ${\ds \e := \fr{1}{2}\big  |\theta-\fr{n-6}{4}\big | \big  ( |\theta|+\big |\fr{n-6}{4}\big |+2\big  )^{-1}}$ we 
 conclude that \eqref{lem-outer1}  holds,  finishing the proof of the claim. 

\end{proof}

We will next proceed in our next claim which holds for  $\gamma> 1/2$. 

\begin{claim}\label{lem-outer2}
For $\gamma>1/2$ and given $\theta\neq\fr{n-6}{4}$ and $\delta>0$, there is $\tau_0=\tau_0(\theta,\delta,\gamma)$ such that such that $ \hat w(\eta,\tau)$ is a subsolution of equation 
\eqref{eq-tildew}  if $\theta<\fr{n-6}{4}$   or  a supersolution    if  $\theta>\fr{n-6}{4}$ on  the
set $$\{(\eta,\tau) \,  | \, \eta > A+\delta,  \,  \, \tau \geq \tau_0\}.$$
\end{claim}

\begin{proof}[Proof of Claim \ref{lem-outer2}]  For a given $\delta_0>0$,
Lemma  \ref{prop-tildew1infty} and the asymptotic behavior of $\hat w_0$, $\hat w_0'$  and $\hat w_0''$ can be used to find a constant $\kappa>0$ such that 
\begin{equation*}\label{eq-lem2-1}
\Bigl|\fr{\hat h}{w_0}\Bigr|,\, \Bigl|\fr{\hat h'}{w_0'}\Bigr|, \, \Bigl|\fr{\hat h''}{w_0''}\Bigr| < \kappa \, \fr{\ln( 1+\eta)}{\eta ^2}< \fr{\kappa}{A+\delta_0}\qquad\text{on } \,\, \eta>A+\delta_0.
\end{equation*}
Thus, we may start with some large $\tau_0$ such that $$\text{$\tw$, $\tw_\eta$, $\tw_{\eta\eta}$ have same the sign as  $\hat w_0$, $\tw_0'$, $\tw_0''$ respectively}$$ on $\eta > A+ \delta$ and $\tau>\tau_0$. In particular, they are nonzero. Using 
the formula for $B[\tw]$ in \eqref{eq-Bw}, we write  
 \bee\fr{e^{2\gamma\tau}}{(n-1)}\, B[\tw]=\Bigl( \fr{\tw_0''}{\tw_0}-\fr{\tw_{\eta\eta}}{\tw}\Bigr)+ \fr{n-6}{4}\Bigl(\fr{\tw_0'^2}{\tw^2}-\fr{\tw_{\eta}^2}{\tw^2}\Bigr)+\big (\theta-\fr{n-6}{4}\big ) \fr{\tw_0'^2}{\tw_0^2}. \eee
We will  show that the first two terms become arbitrarily small in comparison  with the last term,  for  $\tau_0 \gg 1$ 
(and hence $\tau \gg 1$). Indeed, we have 
\bee
\Bigl| \fr{\tw_{\eta\eta}}{\tw}- \fr{\tw_0''}{\tw_0}\Bigr|=e^{-2\gamma\tau}\Bigl|\fr{\tw_0''}{\tw_0}\Bigr| \, \Bigl | \fr{(\hat h''/\tw_0'')-(\hat h/\tw_0)}{1+e^{-2\gamma\tau}(\hat h/\tw_0)}\Bigr| 
\eee
and
\bee
\Bigl| \fr{\tw^2_{\eta}}{\tw^2}- \fr{\tw_0'^2}{\tw_0^2}\Bigr|=e^{-2\gamma\tau} \, \fr{\tw_0'^2}{\tw_0^2}\,  \fr{ \big | 2((\hat h'/\tw_0')-(\hat h/\tw_0))+e^{-2\gamma\tau}((\hat h'/\tw_0')^2-(\hat h/\tw_0)^2) \big |}{(1+e^{-2\gamma\tau}(\hat h/\tw_0))^2}.
\eee
Thus,   the asymptotics  in Lemma \ref{remark-preciseasymp} imply that   by choosing $\tau_0 \gg 1$ we have 
\begin{equation}
\begin{split}
&\Bigl | \fr{\tw_{\eta\eta}}{\tw}- \fr{\tw_0''}{\tw_0}\Bigr| \le 10 \, e^{-2\gamma\tau} \, \kappa\, \Bigl | \fr{\tw_0''}{\tw_0}\Bigr| \, \fr{\ln (1+\eta)}{\eta^2}\\
&\Bigl| \fr{\tw^2_{\eta}}{\tw^2}- \fr{\tw_0'^2}{\tw_0^2}\Bigr| \le 10 \,   e^{-2\gamma\tau}\, \kappa \, \Bigl ( \fr{\tw_0'^2}{\tw_0^2} 
\Bigr )\,  \fr{\ln (1+\eta)}{\eta^2}.
\end{split}
\end{equation}
for all $\tau > \tau_0$. In the case  $\gamma >\fr{1}{2}$, by $\fr{\ln\eta}{\eta^2}$, $e^{-2\gamma\tau}$ terms in the previous estimate and asymptotics \eqref{eq-fexpansioninfty}, we can make $\tau_0$ large and conclude
\begin{equation}
\Bigl| \Bigl(\fr{\tw_0''}{\tw_0}-\fr{\tw_{\eta\eta}}{\tw}\Bigr)+ \fr{n-6}{4}\Bigl(\fr{\tw_0'^2}{\tw^2}-\fr{\tw_{\eta}^2}{\tw^2}\Bigr)\Bigr|
\le \fr{1}{2} \, \big | \theta -\fr{n-6}{4} \big |\fr{\tw_0'^2}{\tw_0^2}
\end{equation}
on the considered region.  This proves that for $\tau \geq \tau_0 \gg1$, \eqref{eqn-bw10} holds, 
finishing the proof of the claim. 
\end{proof}

To finish the proof of the proposition, for any given $\gamma > 1/2$ and   $\theta \neq \frac{n-6}4$, 
Claim \ref{lem-outer1} implies that there exists  $\delta >0$ such that 
such that  \eqref{eqn-bw10} holds in the region $A+\xi_0\,  e^{-\gamma\tau} \leq \eta< A+\delta$. In addition,  by  Claim \ref{lem-outer2},
there exists   $\tau_0=\tau_0(\theta,\delta,\gamma)$ such that \eqref{eqn-bw10} holds in the region $\eta > A+\delta,  \,   \tau \geq \tau_0$.
We conclude that \eqref{eqn-bw10} holds in the whole outer region $\eta > A+\xi_0\,  e^{-\gamma\tau}$ for $\tau \geq \tau_0$ finishing the proof
of the proposition.

\end{proof}

In the case  $0< \gamma \le 1/2$, we need to add a  higher order correction term  in our barrier.  
For integers  $k\ge 2$ and $0\le l \le k$, we  define the functions 
\begin{equation}\label{eqn-vkl} 
 v_{k,l} (\eta) :=  \eta ^{-2k-\fr{1}{\gamma}} \, (\ln \eta)^l, \qquad \eta >1.\end{equation}
They satisfy  the following relation 
\be\label{eqn-relation}
(1+2k\gamma) \, v_{k,l} + \gamma \eta \, v'_{k,l} =\begin{cases}\begin{aligned} &\gamma l  \,  v_{k,l-1}\quad &&\text{ if }l>0\\&0\quad &&\text{ if }l=0\end{aligned}\end{cases}
\ee
and to simplify the notation we also set  $v_{k,-1}(\eta)=0$ and  $v_{k,-2}(\eta)=0$. We will show the following. 

\begin{prop}\label{prop-outer2} For any  $0<\gamma \le 1/2$ and  given $\theta \neq \frac{n-6}4$,  there exist $\tau_0\in \R$, $\xi_0>0$, integer 
$N \geq 2$ and coefficients $\{ c_{k,l} \}_{ 2 \leq k \leq N, \,  0 \le l \le k}$  such that  the function 
$$\tw (\eta,\tau):= \hat w_0(\eta) +e^{-2\gamma\tau}(\hat w_1(\eta)+\theta\tw_2 (\eta))+ \Sigma _{k=2}^{N}   e^{-2k\gamma\tau}  \Sigma_{l=0}^k c_{k,l}  \,  v_{k,l} (\eta) $$ is a subsolution 
of equation  \eqref{eq-tildew}  if $\theta < \fr{n-6}{4}$ or a supersolution if $\theta > \fr{n-6}{4}$, in the region  $$\{ (\eta,\tau) \, | \, \eta \ge A + \xi_0 \, e^{-\gamma \tau}, \,\, \tau\ge\tau_0 \}.$$\end{prop}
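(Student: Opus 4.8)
The plan is to mimic the two-claim structure of Proposition \ref{prop-outer}, but with the higher-order correction terms $\Sigma_{k=2}^N e^{-2k\gamma\tau}\Sigma_{l=0}^k c_{k,l} v_{k,l}(\eta)$ chosen precisely so that the obstruction that forces $\gamma>1/2$ in Claim \ref{lem-outer2} is cancelled order by order in $e^{-2\gamma\tau}$. First I would compute $B[\tw]$ for the proposed ansatz, expanding the nonlinear term $\hat w\hat w_{\eta\eta}+\frac{n-6}{4}\hat w_\eta^2$ in powers of $e^{-2\gamma\tau}$; since $\hat w_0=(n-1)(n-2)(1-(\eta/A)^{-1/\gamma})$, each power $e^{-2k\gamma\tau}$ comes with a coefficient that is (up to the operator $(1+2k\gamma)\,\cdot\,+\gamma\eta\,\partial_\eta$ acting on the unknown $c_{k,\cdot}v_{k,\cdot}$) a known polynomial combination of $\hat w_0$, $\hat h=\hat w_1+\theta\hat w_2$, and the lower-order $v_{j,l}$ with $j<k$. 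The key algebraic point, already visible in \eqref{eqn-relation}, is that the functions $v_{k,l}(\eta)=\eta^{-2k-1/\gamma}(\ln\eta)^l$ are, for fixed $k$, a triangular (in $l$) system under exactly this operator, with $v_{k,l}$ mapping to a multiple of $v_{k,l-1}$; and, crucially, the inhomogeneous terms generated at order $k$ from the products of lower-order pieces are themselves finite linear combinations of the $v_{k,l}$, $0\le l\le k$, by Lemma \ref{prop-tildew1infty}/\ref{remark-preciseasymp} (the leading terms of $\hat w_0$ and $\hat h$ carry $\ln\eta$ factors, which is why one needs $l$ up to $k$). Hence one can solve recursively: given $c_{j,l}$ for all $j<k$, choose $c_{k,l}$ (descending in $l$) to kill the entire $e^{-2k\gamma\tau}$ coefficient of $B[\tw]$ down to order $e^{-2(N+1)\gamma\tau}$; this determines $N$ as the first integer with $2N\gamma>1$ (equivalently $N=\lceil 1/(2\gamma)\rceil$), which is finite for every $\gamma>0$.

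The next step is the analogue of Claim \ref{lem-outer2}: on the region $\{\eta>A+\delta,\ \tau\ge\tau_0\}$, after the cancellation the dominant surviving term in $\frac{e^{2\gamma\tau}}{n-1}B[\tw]$ is $(\theta-\frac{n-6}{4})\,\hat w_0'^2/\hat w_0^2>0$ (respectively $<0$), and everything else is now $O(e^{-2(N+1)\gamma\tau}\,\eta^{-2/\gamma}(\ln\eta)^{N})$ uniformly for $\eta\ge A+\delta$, hence negligible for $\tau_0\gg1$ by the same triangle-inequality bookkeeping as before, using the asymptotics of Lemmas \ref{prop-tildew1infty} and \ref{remark-preciseasymp} for $\hat h$ and its derivatives and the explicit decay of $v_{k,l}$. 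One must also check, as in the opening of that proof, that for $\tau_0$ large $\tw$, $\tw_\eta$, $\tw_{\eta\eta}$ keep the sign of $\hat w_0$, $\hat w_0'$, $\hat w_0''$ on this region, which is immediate from $v_{k,l}/\hat w_0=O(\eta^{-2k}(\ln\eta)^l)$ and similarly for derivatives. Then the analogue of Claim \ref{lem-outer1} — the region $A+\xi_0 e^{-\gamma\tau}\le\eta<A+\delta$ near $\eta=A$ — needs essentially no change: the correction terms $v_{k,l}(\eta)$ and their first two derivatives stay bounded as $\eta\to A^+$ (since $A>1$ can be arranged, or one simply notes $v_{k,l}$ is smooth at $\eta=A$), so they are lower order compared with the $e^{-2\gamma\tau}\hat h$ term, which already dominates there and produces the sign $(\theta-\frac{n-6}{4})(\eta-A)^{-2}$ exactly as in \eqref{eq-lem1}--\eqref{lem-outer1}; thus by shrinking $\delta$ and enlarging $\xi_0$ the claim near $\eta=A$ goes through verbatim. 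Gluing the two regions at $\eta=A+\delta$ as in the end of Proposition \ref{prop-outer} finishes the proof.

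The main obstacle I expect is the bookkeeping in the recursive solvability step: one has to verify that the inhomogeneous term generated at level $k$ really lies in the span of $\{v_{k,l}:0\le l\le k\}$ and contains no terms of the form $\eta^{-2k-1/\gamma}(\ln\eta)^{l}$ with $l>k$, no "resonant" terms proportional to $\eta^{-1/\gamma-2}$ (a homogeneous solution, handled by the free constants $C$ in Lemma \ref{remark-preciseasymp}), and no genuinely new exponents that would force additional basis functions; this is where the precise structure of the asymptotics in Lemmas \ref{prop-tildew1infty} and \ref{remark-preciseasymp} — including the $o(\eta^{-2/\gamma-2})$ remainders — is used to argue that all cross terms either fit the $v_{k,l}$ template or are absorbable into the final $O(e^{-2(N+1)\gamma\tau})$ error. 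Everything else is a routine, if lengthy, repetition of the estimates already carried out for $\gamma>1/2$.
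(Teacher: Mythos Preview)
Your proposal is correct and follows essentially the same two-claim strategy as the paper: choose the $c_{k,l}$ recursively to handle the far region $\eta>A+\delta$, then observe that the near-$A$ region goes through verbatim since the added $v_{k,l}$ are smooth at $\eta=A$ and hence lower order than the $e^{-2\gamma\tau}\hat h$ term there.

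One point to sharpen: you describe the residual after cancellation as $O\bigl(e^{-2(N+1)\gamma\tau}\eta^{-2/\gamma}(\ln\eta)^N\bigr)$, but the inhomogeneous data at each level $k$ match the $v_{k,l}$ template only \emph{modulo} $o(\eta^{-2-2/\gamma})$ remainders (this is exactly the content of Lemma~\ref{remark-preciseasymp}), so what actually survives is $o(\eta^{-2-2/\gamma})\,e^{-4\gamma\tau}$ rather than a pure higher power of $e^{-\gamma\tau}$. This is still negligible against the main term $(\theta-\tfrac{n-6}{4})(\hat w_0)_\eta^2/\hat w_0^2\,e^{-2\gamma\tau}\sim c\,\eta^{-2-2/\gamma}e^{-2\gamma\tau}$, so your conclusion stands, but the error has to be tracked in $\eta$ as well as in $\tau$. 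The paper also streamlines the recursion compared with the full product expansion you sketch: it splits $B=I_1+(n-1)(n-2)-I_2$ into linear and nonlinear parts and, in $I_2$, replaces the denominator $\hat w$ first by $\hat w_0$ and then by its constant limit $(n-1)(n-2)$, absorbing the differences into the $o(\eta^{-2-2/\gamma})$ error; after this only the second-derivative pieces $\hat h_{\eta\eta}$ and $(v_{k,l})_{\eta\eta}$ feed into the recursion, and one uses that $(v_{k-1,l-1})_{\eta\eta}$ is a linear combination of $v_{k,l-1},v_{k,l-2},v_{k,l-3}$ to read off the $c_{k,l}$ (with $c_{k,0}$ free) directly. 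Same outcome, cleaner bookkeeping.
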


\begin{proof}[Proof of Proposition \ref{prop-outer2}]
For the given $0< \gamma \le 1/2$, let $N$ denote the smallest integer making $\gamma > 1/(2N)$, namely $N:=[1/(2\gamma)]+1$. 
The next claim  corresponds to Claim  \ref{lem-outer2} for this case.

\begin{claim}\label{claim-outer3}
For any $0< \gamma \le 1/2$ and   any given choice of $\{c_{k,0} \}_{2 \le k \le N} $, there are coefficients $\{c_{k,l} \}_{2  \le k \le N, 0 \le l \le k}$ 
so that for any given $\delta >0$ and $\theta \neq \fr{n-6}{4}$,  the function 
 \bee \label{eq-higherform}\tw (\eta,\tau)= \hat w_0(\eta) +e^{-2\gamma\tau}(\hat w_1(\eta)+\theta\, \tw_2 (\eta))+\Sigma _{k=2}^{N}   e^{-2k\gamma \tau}  \Sigma_{l=0}^k c_{k,l}  \,  v_{k,l} (\eta) \eee  is  a subsolution 
of equation 
\eqref{eq-tildew}  if $\theta<\fr{n-6}{4}$   or  a supersolution    if  $\theta>\fr{n-6}{4}$ on  the
set  $$\{(\eta,\tau) \ |\ \eta > A+\delta, \,\, \tau \ge \tau_0\}$$
where  $\tau_0=\tau_0(\gamma, \theta, \delta) \gg 1$.
\end{claim}

\begin{proof}[Proof of Claim \ref{claim-outer3}]
Let us assume $\theta < \fr{n-6}{4}$ because the other case follows similarly.  Suppose $\tw$ is of the  form of \eqref{eq-higherform}. We will choose the coefficients  $c_{k,l} $ later. 

We split the operator $B[\cdot]$ given by \eqref{eqn-derror} into linear and nonlinear parts, that is  we write  
\begin{equation} B[\hat w]= I_1[\tw] +(n-1)(n-2) - I_2[\tw].
\end{equation}
 where
 $$I_1[\tw] := \partial_\tau \hat w -\gamma\eta \, \hat w_\eta -\hat w, \quad I_2[\tw]:= (n-1)\Bigl(\fr{\hat w_{\eta\eta}}{\hat w}+\fr{n-6}{4}\fr{\hat w_\eta ^2}{\hat w ^2} \Bigr)e^{-2\gamma \tau}.$$
Then, using \eqref{eqn-f12}, \eqref{eq-linearizationouter} and \eqref{eqn-relation} we find 
 \bee \begin{aligned} I_1[\tw] = &I_1[\tw _0] + I_1[ \hat h \, e^{-2\gamma\tau} ]+ \Sigma_{k=2}^{N}\Sigma_{l=0}^k c_{k,l}\, I_1[ v_{k,l} \, e^{-2k\gamma\tau}] \\ 
 =& -(n-1)(n-2)+  (n-1)\Bigl(\fr{(\hat w_0)_{\eta\eta}}{\hat w_0}+\theta\fr{(\hat w_0)_\eta ^2}{\hat w_0 ^2} \Bigr)e^{-2\gamma\tau} \\
 &- \Sigma_{k=2}^{N}  e^{-2k\gamma\tau}\Sigma_{l=1}^k \gamma l \, c_{k,l}   \, v_{k,l-1} .\end{aligned}\eee
Meanwhile,  using the asymptotics of $\tw_0$, $\tw_1+\theta\tw_2$, $v_{k,l} $ and their derivatives
\bee\begin{split} 
I_2[\tw]  &= (n-1)\Bigl (  \fr{\hat w_{\eta\eta}}{\hat w} + \fr{n-6}{4} \fr{(\hat w_0)_\eta ^2}{\tw_0^2}+ o(\eta^{-2-\fr{2}{\gamma}})e^{-2\gamma \tau} \Bigr ) e^{-2\gamma \tau}\\
 =& (n-1)\Bigl (  \fr{\hat w_{\eta\eta}}{\hat w_0} + \fr{n-6}{4} \fr{(\hat w_0)_\eta ^2}{\tw_0^2}+o(\eta^{-2-\fr{2}{\gamma}})e^{-2\gamma \tau} \Bigr ) e^{-2\gamma \tau}\\
 =& (n-1)\Bigl (   \fr{ (\tw_0)_{\eta\eta}}{\hat w_0} + \fr{n-6}{4} \fr{(\hat w_0)_\eta ^2}{\tw_0^2} \Bigr )  e^{-2\gamma \tau} \\&+ (n-1) \,  \fr{ \hat h_{\eta\eta} e^{-4\gamma\tau} + \Sigma_{k=2}^N  e^{-2(k+1) \gamma \tau} \, \Sigma_{l=0}^k c_{k,l} (v_{k,l} )_{\eta\eta}}{\tw_0} + o(\eta^{-2-\fr{2}{\gamma}})e^{-4\gamma \tau} \\
 =& (n-1)\Bigl (   \fr{ (\tw_0)_{\eta\eta}}{\hat w_0} + \fr{n-6}{4} \fr{(\hat w_0)_\eta ^2}{\tw_0^2} \Bigr )  e^{-2\gamma \tau} \\&+ \fr{ \hat h_{\eta\eta} e^{-4\gamma\tau} + \Sigma_{k=2}^{N-1}  e^{-2(k+1) \gamma \tau} \, \Sigma_{l=0}^k  c_{k,l} (v_{k,l} )_{\eta\eta}}{(n-2)}  + o(\eta^{-2-\fr{2}{\gamma}})\, e^{-4\gamma \tau}
\end{split}.\eee
In the last line we used that $2N > 1/\gamma$.  Also, 
 $g(\eta,\tau)=o(\eta^{-2-\fr{2}{\gamma}})$ means that $\sup_{\eta>\eta', \tau>\tau'} 
 \eta^{2+\fr{2}{\gamma}}  g(\eta,\tau) \to 0 $,  as $\eta'\to\infty$ for any fixed $\tau'$. 

Combining the above computations yields 
\bee \begin{split}
B[\tw] =& (n-1)\Bigl(\theta - \fr{n-6}{4}\Bigr)\fr{(\hat w_0)_\eta ^2}{\tw_0^2}e^{-2\gamma\tau} - \Sigma_{k=2}^{N} \, e^{-2k\gamma\tau}\, 
\Sigma_{l=1}^k \gamma l \, c_{k,l}   \, v_{k,l-1}  \\
& -\fr{ \hat h_{\eta\eta} e^{-4\gamma\tau} + \Sigma_{k=2}^{N-1}  \, e^{-2(k+1) \gamma \tau}\, \Sigma_{l=0}^k  c_{k,l} \, (v_{k,l} )_{\eta\eta}}{(n-2)}  + o(\eta^{-2-\fr{2}{\gamma}}) e^{-4\gamma \tau}\\
=& (n-1)\Bigl(\theta - \fr{n-6}{4}\Bigr)\fr{(\hat w_0)_\eta ^2}{\tw_0^2}e^{-2\gamma\tau} -\Bigl (\fr{h_{\eta\eta}}{n-2} + \Sigma_{l=1}^2 \gamma l \, c_{2,l} v_{2,l-1} \Bigr )  e^{-4\gamma\tau} \\
&-\Sigma_{k=3}^N\Bigl ( \fr{1}{n-2}\Sigma_{l=1}^{k} c_{k-1,l-1}( v_{k-1,l-1})_{\eta\eta} +\Sigma_{l=1}^k \gamma l \, c_{k,l}   v_{k,l-1} \Bigr ) \, e^{-2k\gamma\tau} \\&+o(\eta^{-2-\fr{2}{\gamma}}) e^{-4\gamma \tau}.
\end{split}
\eee
Let us remark   that $(v_{k-1,l-1})_{\eta\eta} $ can  be written as a linear combination of $\{  v_{k,l-1},v_{k,l-2},v_{k,l-3}\}$.
Hence for any given  $\{c_{k,0}\} _{2\le k\le N}$,  there is a  unique choice $\{c_{k,j}\}_{2\le k \le N, \ 1\le l \le k}$ such that \bee B[\tw] = (n-1)\Bigl(\theta - \fr{n-6}{4}\Bigr)\fr{(\hat w_0)_\eta ^2}{\tw_0^2}e^{-2\gamma\tau} +o(\eta^{-2-\fr{2}{\gamma}})e^{-4\gamma \tau}. \eee Here we also used the asymptotic expansion of $h_{\eta\eta}$ as $\eta \to \infty$, namely 
\bee
h_{\eta\eta} = (n-1)\, A^{\fr{1}{\gamma}}   \fr{(1+\gamma) (1+2\gamma)(1+3\gamma)}{\gamma^5}  v_{2,1}+C'' v_{2,0}+ o(\eta^{-2-\fr{2}{\gamma}})
\eee which has been shown in Lemma  \ref{remark-preciseasymp}.
Finally, we may find a large $\tau_0 =\tau_0(\delta, \gamma, \theta)$ such that $B[\tw] <0$ on the region $\eta >  A+\delta$
for $\tau \geq \tau_0.$  This finishes the proof of our claim. 

\end{proof}

As we fixed $\tw_1$ and $\tw_2$ in the proof of Proposition \ref{prop-outer}, from now on let us fix $c_{k,l} $
so that Claim \ref{claim-outer3} holds,   by choosing $c_{k,0}=0$.
Next, we give  the  analogue of Claim  \ref{lem-outer1} in this case.
\begin{claim}\label{claim-outer4} 
For given $0<\gamma\le 1/2$, $\theta\neq\fr{n-6}{4}$, there exist $\xi_0>0$ and $\delta>0$ such that $$\tw (\eta,\tau)= \hat w_0(\eta) +e^{-2\gamma\tau}(\hat w_1(\eta)+\theta\tw_2 (\eta))+\Sigma _{k=2}^{N}  e^{-2k\gamma \tau}\, \Sigma_{l=0}^k c_{k,l} \,  v_{k,l} (\eta) $$ is 
is  a subsolution  of equation  \eqref{eq-tildew}  if $\theta<\fr{n-6}{4}$   and  a supersolution    if  $\theta>\fr{n-6}{4}$ on  the
region  $$\{(\eta,\tau)  \  |\  A+\xi_0 e^{-\gamma\tau}<\eta<A+\delta, \ \tau > 0\}.$$
\end{claim} 
\begin{proof}[Proof of Claim \ref{claim-outer4}]
By rewriting $\tw(\eta,\tau) = \tw_0(\eta) + e^{-2\gamma\tau} \, \hat h(\eta,\tau)$, we have the same estimate of Proposition \ref{prop-tildew1at1} and the proof is actually the same as of Claim  \ref{lem-outer1}.
\end{proof}

The proof of the Proposition \ref{prop-outer2} now  readily follows by combining claims \ref{claim-outer3} and \ref{claim-outer4}.
Let us fix  $0<\gamma\le 1/2$ and $\theta\neq \fr{n-6}{4}$. Let   $c_{k,l}$ be coefficients  
with $c_{k,0}=0$ be so that Claim \ref{claim-outer3} holds. For that choice of $c_{k,l}$, Claim \ref{claim-outer4} gives the existence
of $\xi_0 >0$ and $\delta >0$ so that  $\hat w$ is a subsolution (supersolution)  in the region  
$A+\xi_0 e^{-\gamma\tau}<\eta<A+\delta, \ \tau > 0$. By Claim \ref{claim-outer3} there exists $\tau_0=\tau_0(\gamma,\theta,\delta)$ such that $\hat w$ is a subsolution (supersolution)  in the region $\eta > A+\delta, \, \tau \geq \tau_0$. We conclude that 
$\hat w$ is a subsolution (supersolution)  in the region $\eta > A+\xi_0 e^{-\gamma\tau}, \, \tau \geq \tau_0$. Since $\delta=\delta(\gamma,\theta)$ we also have that $\tau_0=\tau_0(\gamma,\theta)$.

\end{proof} 

\section{Barrier construction in the inner region}\label{sec-inner}
We will now construct the appropriate barrier in the {\em inner region} which is the region where  
$$e^{\gamma \tau} \, \hat w(\eta,\tau) = O(1), \qquad \mbox{as} \,\,  \tau \to +\infty.$$
In this region we define  $\bar w(\xi,\tau)$ as in  \eqref{eqn-cv2},  that is we set $\bar w(\xi,\tau) = e^{\gamma \tau} \, \hat w(\eta,\tau)$,
$\xi=(\eta - A)\, e^{\gamma \tau}$. We have seen in section \ref{sec-formal} that   $\bar w (\xi,\tau)$ 
satisfies the equation $I[\bar w]=0$ with $I[\cdot]$ given by \eqref{eq-barw}. Let us assume that  in this region  
the first term in \eqref{eq-barw} having $e^{-\gamma\tau}$ becomes  negligible  as $\tau\to\infty$. Then, 
we expect that the solution $\bar w_0(\xi)$ of equation 
\bee \fr{(\bar w_0)_{\xi\xi}}{\bar w}+\fr{n-6}{4}\fr{(\bar w_0)_\xi ^2}{\bar w ^2} -(n-1)(n-2) + \gamma A \,  (\bar w_0)_\xi  =0
\eee
is the leading order term for $\bar w(\xi,\tau)$ in this region. 
We are going to find super and sub solutions $ \bw^+$ and $\bw^-$, respectively in the following form
\begin{align*}
&\bw^+ (\xi,\tau)= \fr{1}{1+\e} \bw_0 (\xi+C_1(\tau))\\
&\bw^-(\xi,\tau)=\fr{1}{1-\e} \bw_0(\xi+ C_2(\tau)).
\end{align*}
Here, $\e>0$ is a small constant and $C_1(\tau)$ and $C_2(\tau)$ are smooth functions of $\tau$. Both  will be chosen later and will depend on  $\xi_0$ which appears  in the construction of our  barriers in the outer region. 
As we will see below, the construction   is rather straightforward. 

If we plug these into $I[\cdot]$, we get 
\begin{align*}
&I[\bw^+] =+ \e \bw^+_\xi +e^{-\gamma\tau}(C_1'(\tau)\, \bw^+_\xi - (1+\gamma) \, \bw^+) \\ 
&I[\bw^-] = -\e \bw^-_\xi +e^{-\gamma\tau}(C_2'(\tau)\, \bw^+_\xi - (1+\gamma) \, \bw^-).
\end{align*}

We will next show the  following. 

\begin{prop}\label{prop-inner} Let  $0<\e<1$ and $\tau_0\in \R$. If $|C_1'(\tau)|,|C_2'(\tau)| \leq M$ on $\tau \geq\tau_0$, then 
$\xi_1$  there exist $\tau_1=\tau_1(\e,M,\xi_1)\ge\tau_0$ such that $\bw^+$ or $\bw^-$ are super or  sub solutions of equation 
\eqref{eqn-bw}  respectively, in  the region $(\xi,\tau)\in (-\infty,\xi_1)\times (\tau_1,\infty)$.

\begin{proof} For any two functions $f(s), g(s)$ we use the notation
$$f(s)\sim g(s), \,\,   \mbox{as} \,\, s\to\infty \qquad \mbox{iff} \qquad     c<\Bigl|\fr{f(s)}{g(s)}\Bigr|<C, \,   \mbox{for} \,\, s \gg 1$$
for some fixed constants $c >0$, $C < +\infty$.  

In this proof we will use the asymptotics for  $\bar w_0(s)$ and $\bar w_0 '(s)$,   as $s\to \infty$, 
which were   shown in Proposition 2.1 in \cite{CD} or \cite{DS, H}.  
Since $\bar w_0(s)=e^{2s}\, \bar U^{1-m}(e^s)$ and $\bar U^{1-m}(|x|) \, \delta_{ij} $ is a smooth radial metric on $\R^n$, we have
$$\bw_0\sim (\bw_0)_s \sim e^{2s}, \qquad  \mbox{as} \,\, s\to\infty.$$ Moreover since 
$\bw_0 \sim s, \, (\bw_0)_s\sim 1$, as $s\to\infty$, it is clear  that there is some $\tau_2$ and $c$ so that 
\bee e^{-\gamma\tau} \, (1+\gamma)\, \bw_0(s)<\fr{\e}{2}(\bw_0)_s(s), \quad 
(s,\tau)\in(-\infty,c\, e^{\gamma\tau})\times(\tau_2,\infty).\eee
Now given $\xi_1$ and   $C_1(\tau)$ and $C_2(\tau)$ satisfying the  conditions in our proposition,    we can find some $\tau_1>\max(\tau_0,\tau_2)$ such that \begin{equation}\label{eq-lempf2}\xi_1+C_i(\tau)<c\, e^{\gamma\tau} \ \,  \text{ and } \ \,
\quad |e^{-\gamma\tau}C'_i(\tau)|<\fr{\e}{2}, \quad \text{ for } \tau>\tau_1.\end{equation} 
The last two formulas  and the  fact that $w_0>0$, $(w_0)_s>0$,  imply that $I[\bw^+]>0$ and $I[\bw^-]<0$ on $(\xi,\tau)\in$ $(-\infty,\xi_1)\times (\tau_1,\infty)$, as claimed.\end{proof}\end{prop}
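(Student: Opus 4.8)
The plan is to verify directly that the two candidate functions $\bar w^\pm$ make the operator $I[\cdot]$ have the right sign once $\tau$ is large enough and $\xi$ is bounded above by $\xi_1$. Since we have already recorded the identities
$$I[\bar w^+] = \e\, \bar w^+_\xi + e^{-\gamma\tau}\bigl(C_1'(\tau)\, \bar w^+_\xi - (1+\gamma)\,\bar w^+\bigr),\qquad I[\bar w^-] = -\e\, \bar w^-_\xi + e^{-\gamma\tau}\bigl(C_2'(\tau)\, \bar w^-_\xi - (1+\gamma)\,\bar w^-\bigr),$$
the whole matter reduces to showing that the leading term $\pm\e\,\bar w^\pm_\xi$ dominates the $e^{-\gamma\tau}$-correction throughout the region. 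Both $\bar w^\pm$ are positive multiples of a translate of $\bar w_0$, so $\bar w^\pm>0$ and $\bar w^\pm_\xi>0$ wherever $(\bar w_0)_s>0$, which reduces everything to quantitative control of $\bar w_0$ and $(\bar w_0)_s$.

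First I would invoke the known asymptotics of $\bar w_0$. From Proposition~2.1 of \cite{CD} (see also \cite{DS,H}) we have $\bar w_0 \sim s$ and $(\bar w_0)_s \sim 1$ as $s\to+\infty$, while from the fact that $\bar w_0(s)=e^{2s}\,\bar U^{1-m}(e^s)$ for a smooth radial metric on $\R^n$ we get $\bar w_0\sim(\bar w_0)_s\sim e^{2s}$ as $s\to-\infty$; in particular $\bar w_0>0$ and $(\bar w_0)_s>0$ everywhere. The sublinear growth of $\bar w_0$ against the positivity and boundedness-away-from-zero of $(\bar w_0)_s$ is exactly what lets the reaction-type term $(1+\gamma)\bar w_0(s)$ be absorbed: there exist $c>0$ and $\tau_2$ so that $e^{-\gamma\tau}(1+\gamma)\bar w_0(s) < \tfrac{\e}{2}(\bar w_0)_s(s)$ on the wedge $(s,\tau)\in(-\infty,c\,e^{\gamma\tau})\times(\tau_2,\infty)$ — one checks this separately for $s\le 0$, where $\bar w_0\le (\bar w_0)_s$ up to a constant, and for $0<s<c\,e^{\gamma\tau}$, where $\bar w_0\lesssim s \le c\,e^{\gamma\tau}$ so the factor $e^{-\gamma\tau}$ kills the growth once $c$ is fixed small.

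Next I would translate this into the required statement. Given $\xi_1$ and functions $C_1,C_2$ with $|C_i'(\tau)|\le M$ on $\tau\ge\tau_0$, the point is simply that as $\tau\to\infty$ the shifted argument $\xi_1 + C_i(\tau)$ — where $C_i$ grows at most linearly in $\tau$ — still lies in $(-\infty,c\,e^{\gamma\tau})$, and that $|e^{-\gamma\tau}C_i'(\tau)|\le M e^{-\gamma\tau}<\tfrac{\e}{2}$; both hold for all $\tau$ beyond some $\tau_1=\tau_1(\e,M,\xi_1)\ge\max(\tau_0,\tau_2)$. Evaluating at any $(\xi,\tau)$ with $\xi<\xi_1$, $\tau>\tau_1$, and using monotonicity of $\bar w_0$ so that the estimates pass to the shifted arguments, the correction term is bounded in absolute value by $\tfrac{\e}{2}\bar w^\pm_\xi + \tfrac{\e}{2}\bar w^\pm_\xi$, hence strictly smaller than $\e\,\bar w^\pm_\xi$; therefore $I[\bar w^+]>0$ and $I[\bar w^-]<0$ on $(-\infty,\xi_1)\times(\tau_1,\infty)$, which is the claim.

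The only genuinely delicate point is the interplay between the (at most linear) growth of $C_i(\tau)$ coming from the bound $|C_i'|\le M$ and the exponential width $c\,e^{\gamma\tau}$ of the admissible wedge: one must make sure that for \emph{all} $\tau>\tau_1$, not just eventually in a crude sense, the translated spatial cutoff stays inside the region where the sublinear-versus-constant comparison for $\bar w_0$ was established. This is harmless because exponential beats linear, but it is the place where the constants $\tau_1$, $c$, and $\tau_2$ have to be chosen in the right order — first $c$ and $\tau_2$ from the asymptotics of $\bar w_0$ alone, then $\tau_1$ depending on $\e$, $M$, $\xi_1$ and the previously fixed $c,\tau_2,\tau_0$. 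Everything else is a direct substitution using the two displayed formulas for $I[\bar w^\pm]$ and the sign and size information on $\bar w_0,(\bar w_0)_s$.
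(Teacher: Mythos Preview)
Your proof is correct and follows essentially the same route as the paper's own argument: reduce to the two displayed identities for $I[\bar w^\pm]$, use the asymptotics of $\bar w_0$ at both ends to get the wedge inequality $e^{-\gamma\tau}(1+\gamma)\bar w_0(s)<\tfrac{\e}{2}(\bar w_0)_s(s)$ on $(-\infty,c\,e^{\gamma\tau})\times(\tau_2,\infty)$, then choose $\tau_1$ so that the shifted argument $\xi_1+C_i(\tau)$ stays in this wedge and $|e^{-\gamma\tau}C_i'(\tau)|<\tfrac{\e}{2}$. Your write-up is in fact slightly more careful than the paper's in two places: you correctly identify the $e^{2s}$ behaviour as $s\to-\infty$ (the paper's text reads ``$s\to\infty$'' there, which is a typo), and you spell out the case split $s\le 0$ versus $0<s<c\,e^{\gamma\tau}$ underlying the wedge estimate.
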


\section{Construction of super  and sub-solutions}\label{sec-barriers} 

In this section we will combine the results from Sections \ref{sec-outer} and \ref{sec-inner} to construct a family of super-solutions $w^+_\e$ and  sub-solutions $w^-_\e$ of  equation \eqref{eq-w} which is
equivalent to the conformally flat Yamabe flow \eqref{eq-flatyamabe} under rotational symmetry and after the cylindrical change of variables \eqref{eq-uw}. This will give a family of rotationally symmetric super and sub solutions of equation \eqref{eq-flatyamabe}  which we will then be used  in the next section to analyze the type II blow up behavior of any  solution $u(x,t)$ of \eqref{eq-flatyamabe} which satisfies the assumptions of Theorem \ref{thm-main1}.

We begin by fixing  $$\gamma >0, \qquad A > 0,  \qquad  \theta^+>\fr{n-6}{4}, \qquad \theta^-<\fr{n-6}{4}.$$
For these choices of parameters and following the results in Section \ref{sec-outer}, we define the super and sub-solutions $\tw^+$ and $\tw^-$ {\em corresponding to $\theta^+$ and $\theta^-$ respectively}  in the {\em outer region} $\eta > A+\xi_0\, e^{-\gamma\tau}$ 
 separately for different ranges of $\gamma$: for 
 $\gamma>1/2$ we set 
\bee\tw^{\pm}(\eta,\tau):=\hat w_0(\eta) + e^{-2\gamma\tau} \, \big ( \hat w_1(\eta)+\theta^{\pm} \, \hat w_2(\eta) \big )
\eee
while for  $0<\gamma\le 1/2$, we add the extra correction term setting 
\bee\tw^{\pm}(\eta,\tau):=\hat w_0(\eta) + e^{-2\gamma\tau}\, \big ( \hat w_1(\eta)+\theta^{\pm}\tw_2(\eta) \big) +\Sigma_{k=2}^{N}\Sigma_{l=0}^k c_{k,l}  e^{-2k\gamma \tau} v_{k,l} (\eta).
\eee
Propositions \ref{prop-outer} and \ref{prop-outer2}, show that there exist $\tau_0$ and $\xi_0>0$,  such that $\tw^+$ and $\tw^-$ are super and sub solutions, respectively  on the region $(\eta,\tau)\in (A+\xi_0e^{-\gamma\tau},\infty) \times [\tau_0,\infty)$. 
Also,  following the results in the previous section \ref{sec-inner}, we define   the prospective super and sub-solutions $\bw^+_\e$ and $\bw^-_\e$ in the {\em inner region} by setting \begin{equation}\begin{aligned}\label{eq-w+w-}
&\bw^+_\e (\xi,\tau):= \fr{1}{1+\e} \, \bw_0 (\xi+C_1(\tau))\\
&\bw^-_\e(\xi,\tau):=\fr{1}{1-\e} \, \bw_0(\xi+ C_2(\tau)). 
 \end{aligned}\end{equation}
The   small  constant $\e\in[0,1)$ will be chosen later.  Also,   for some fixed $\xi_1$ to be determined later,  let  $C_1(\tau), \, C_2(\tau)$ be smooth functions  defined on $\tau \geq\tau_0$ such that 
 \be\label{eq-gluerelation}
e^{\gamma\tau}\, \tw^{\pm} (A+\xi_1\, e^{-\gamma\tau},\tau) = \bw^{\pm}_\e (\xi_1,\tau). 
\ee
Note that the  functions  $C_i(\tau)$ uniquely  exist  and are smooth because $\bar w_0(\cdot)$ is  strictly increasing smooth function onto $(0,\infty)$ 
and $\tw^{\pm}(A+\xi_1e^{-\gamma\tau},\tau)$  are positive smooth functions  on $\tau\geq \tau_0$. 
Moreover, since   $\tw_2 >0$  and  $\theta^+ > \theta^-$, we have  
$ e^{\gamma \tau}  \hat w^+(A+\xi_1 e^{-\gamma \tau},\tau) > e^{\gamma \tau} \hat 
w^-(A+\xi_1 e^{-\gamma \tau},\tau)$. Therefore    \eqref{eq-gluerelation} and the definition of $w^{\pm}_\e$ imply   that  $\bar w_0(\xi_1 + C_1(\tau)) > \bar w_0(\xi_1 + C_2(\tau))$.  Using again that $\bar w_0(\cdot)$ is a strictly increasing we conclude that
\be\label{eqn-CCC} C_1(\tau) > C_2(\tau), \qquad \tau \geq \tau_0
\ee
which  will be used later. 

It follows from the above discussion that for  $\tau \geq \tau_0$, we can glue  the functions $ e^{\gamma \tau}\, \tw^{\pm}(A+\xi e^{-\gamma \tau},\tau)$ and $\bw^{\pm}_\e(\xi,\tau) $   at 
$\xi=\xi_1$ to form a continuous and piecewise smooth function, namely we define 
\begin{equation}\begin{aligned}\label{eq-tildew1+}
&w^+_\e(\xi,\tau):=\begin{cases}\begin{aligned}
&\bar w^+_\e(\xi,\tau)&&\text{ if } \xi\le \xi_1\\
&e^{\gamma\tau}\tw^+(A+\xi e^{-\gamma\tau},\tau)&&\text{ if }\xi>\xi_1\end{aligned}\end{cases}
\\
& w^-_\e(\xi,\tau):=\begin{cases}\begin{aligned}
&\bar w^-_\e(\xi,\tau)&&\text{ if } \xi\le \xi_1\\
&e^{\gamma\tau}\tw^-(A+\xi e^{-\gamma\tau},\tau)&&\text{ if }\xi>\xi_1\end{aligned}\end{cases}
\end{aligned}\end{equation}  
(see Figure \ref{fig-2} below). 

We will show next that the functions $w^+_\e(\xi,\tau)$ and $w^-_\e(\xi,\tau)$ have the following properties:

\begin{prop}\label{prop-main1}
 There exist $\xi_1>0$ and $\e_1>0$ such that for any $0 < \e < \e_1$ there is a $\tau_1=\tau_1(\e)$ for which 
the functions 
 $w^+_\e$ and $w^-_\e$ given by   \eqref{eq-tildew1+} with  $0<\e< \e_1$,  have following properties:
\begin{enumerate}[{\em(i)}]
\item $w^+_\e(\xi,\tau)>w^-_\e(\xi,\tau)>0$ on  $(-\infty,\infty)\times [\tau_1,\infty)$;
\item $w^+_\e(\xi,\tau)$ and $w^-_\e(\xi,\tau)$ are continuous on $(-\infty,\infty)\times [\tau_1,\infty)$ and smooth  for $\xi \neq \xi_1$;
\item for all $(\xi,\tau)$ with $\xi \neq \xi_1$ and $\tau \geq \tau_1$, they satisfy  $I[w^+_\e]>0$ and $I[w^-_\e]<0$,  
i.e. they are super and sub-solutions, respectively; 
\item  at the non-smooth points  $(\xi_1,\tau)$, $\tau \geq \tau_1$, they satisfy  
$$\lim_{\xi\to\xi_1-}\fr{\partial}{\partial \xi}w^+_\e(\xi,\tau)>\lim_{\xi\to\xi_1+}\fr{\partial}{\partial \xi}w^+_\e(\xi,\tau)$$
$$\lim_{\xi\to\xi_1-}\fr{\partial}{\partial \xi}w^-_\e(\xi,\tau)<\lim_{\xi\to\xi_1+}\fr{\partial}{\partial \xi}w^-_\e(\xi,\tau).$$
\end{enumerate}
\end{prop}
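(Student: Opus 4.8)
The strategy is to assemble the four properties from the pieces already in hand, choosing the free parameters $\xi_1$, $\e_1$, $\tau_1$ in the right order so that every earlier proposition applies simultaneously. First I would fix $\theta^\pm$ (as in the setup) and invoke Propositions~\ref{prop-outer} and \ref{prop-outer2} to obtain $\xi_0>0$ and $\tau_0$ for which $\tw^+$ is a supersolution and $\tw^-$ a subsolution of \eqref{eq-tildew} on the outer region $\{\eta\ge A+\xi_0 e^{-\gamma\tau},\,\tau\ge\tau_0\}$. Then I would pick $\xi_1>\xi_0$ once and for all (its precise size is dictated only by the gluing inequality in (iv), see below), so that the outer barriers are valid on $\{\xi\ge\xi_1\}$ after passing to the $\bar w$ variable via $\bar w(\xi,\tau)=e^{\gamma\tau}\tw(A+\xi e^{-\gamma\tau},\tau)$, which satisfies $I[\cdot]=0$ exactly when $\tw$ satisfies $B[\tw]=0$; so the sign of $B$ transfers directly to the sign of $I$ for the outer piece of $w^\pm_\e$, giving the $\xi>\xi_1$ half of (iii).

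\textbf{Controlling $C_1,C_2$ and applying the inner proposition.} The functions $C_1(\tau),C_2(\tau)$ are defined implicitly by the gluing relation \eqref{eq-gluerelation}. I would first note that $e^{\gamma\tau}\tw^\pm(A+\xi_1 e^{-\gamma\tau},\tau)$ converges, as $\tau\to\infty$, to the value $\bar w_0$'s asymptotic leading term evaluated near $\xi_1$ — more precisely, from Lemma~\ref{prop-tildew1at1} one has $e^{\gamma\tau}\tw^\pm(A+\xi_1 e^{-\gamma\tau},\tau)\to \frac{(n-1)(n-2)}{\gamma A}\xi_1 + O(e^{-\gamma\tau})$-type behavior, and the correction terms $v_{k,l}$ and the $e^{-2\gamma\tau}\hat h$ term are $O(e^{-\gamma\tau})$ after multiplying by $e^{\gamma\tau}$. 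Since $\bar w_0$ is a smooth strictly increasing diffeomorphism onto $(0,\infty)$, inverting shows $C_1(\tau)$ and $C_2(\tau)$ converge to finite limits, hence in particular $C_i'(\tau)\to 0$; so for any $M>0$ there is $\tau_0'\ge\tau_0$ with $|C_i'(\tau)|\le M$ for $\tau\ge\tau_0'$. This is exactly the hypothesis of Proposition~\ref{prop-inner}, which then yields $\tau_1=\tau_1(\e,M,\xi_1)$ so that $\bar w^+_\e$ is a supersolution and $\bar w^-_\e$ a subsolution on $(-\infty,\xi_1)\times(\tau_1,\infty)$; this gives the $\xi<\xi_1$ half of (iii). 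Property (ii) is immediate from the construction since each piece is smooth and they agree at $\xi=\xi_1$ by \eqref{eq-gluerelation}.

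\textbf{The ordering (i) and the corner conditions (iv).} For (i), the inequality $C_1(\tau)>C_2(\tau)$ was already established in \eqref{eqn-CCC} from $\theta^+>\theta^-$ and monotonicity of $\bar w_0$; combined with $\bar w_0$ increasing this gives $w^+_\e>w^-_\e$ on $\xi\le\xi_1$, while on $\xi>\xi_1$ it follows from $\tw_2>0$ and $\theta^+>\theta^-$ that $\tw^+>\tw^-$, hence $w^+_\e>w^-_\e$ there too; positivity of $w^-_\e$ holds because $\bar w_0>0$ and the outer barrier $\tw^-$ is a small perturbation of the positive function $\hat w_0$ on $\eta>A+\xi_0 e^{-\gamma\tau}$, which one checks using the bounds in the proof of Claim~\ref{lem-outer1}. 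The one genuinely delicate point — and the step I expect to be the main obstacle — is (iv): I must show the left $\xi$-derivative of $w^+_\e$ at $\xi_1$ strictly exceeds the right one (and oppositely for $w^-_\e$). Here I would compute both one-sided derivatives explicitly. The right derivative is $e^{\gamma\tau}\cdot e^{-\gamma\tau}\tw^\pm_\eta(A+\xi_1 e^{-\gamma\tau},\tau)=\tw^\pm_\eta(\cdot)$, which by Lemma~\ref{prop-tildew1at1} behaves like $\frac{(n-1)(n-2)}{\gamma A}+O(e^{-\gamma\tau})$; the left derivative is $\frac{1}{1\pm\e}\bar w_0'(\xi_1+C_i(\tau))$, and by the asymptotics \eqref{eq-solitonasymptotics}/\eqref{eqn-soliton2} of $\bar w_0$ this is $\frac{1}{1\pm\e}\big(\frac{(n-1)(n-2)}{\gamma A}+O(1/\xi_1)\big)$. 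For $w^+_\e$ the factor $\frac{1}{1+\e}<1$ makes the left derivative strictly smaller than $\frac{(n-1)(n-2)}{\gamma A}$ by a definite amount $\sim\e$, while the right derivative is within $O(e^{-\gamma\tau})$ of $\frac{(n-1)(n-2)}{\gamma A}$ — wait, that has the inequality backwards, so in fact one needs to be careful: the correct reading is that the gluing \emph{value} matches but the \emph{slopes} differ, and the sign is controlled by choosing $\xi_1$ large enough that the $O(1/\xi_1)$ and $O(e^{-\gamma\tau})$ terms are dominated by the definite $\e$-sized gap coming from $\frac{1}{1\pm\e}$ together with the next-order coefficient $\frac{(n-1)(n-6)}{4\gamma A}$ in \eqref{eqn-soliton2} versus the $\theta^\pm$-dependent next-order coefficient $\frac{(n-1)\theta^\pm}{\gamma A}$ from the outer expansion. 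Concretely, since the outer barrier with $\theta^+>\frac{n-6}4$ sits \emph{above} the genuine soliton slope to second order while $\frac{1}{1+\e}\bar w_0$ scales the soliton slope \emph{down}, the outer piece is steeper at the corner; I would make this rigorous by expanding both sides to the order at which they first disagree ($1/\xi_1$), checking the sign of the leading discrepancy, and then fixing $\xi_1$ large and $\e_1$ small (and $\tau_1$ large, absorbing the $e^{-\gamma\tau}$ errors) so the strict inequalities in (iv) persist for all $\tau\ge\tau_1$. The symmetric computation with $\theta^-<\frac{n-6}4$ and factor $\frac{1}{1-\e}>1$ gives the reversed inequality for $w^-_\e$. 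Choosing $\xi_1$ and $\e_1$ at the end so that all of the above constraints hold simultaneously completes the proof.
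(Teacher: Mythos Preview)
Your overall architecture matches the paper's, but there are two genuine gaps.

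\textbf{Part (i) on the inner region is wrong as stated.} You claim that $C_1(\tau)>C_2(\tau)$ together with monotonicity of $\bar w_0$ yields $w^+_\e>w^-_\e$ on $\xi\le\xi_1$. But recall $w^+_\e=\frac{1}{1+\e}\bar w_0(\xi+C_1)$ and $w^-_\e=\frac{1}{1-\e}\bar w_0(\xi+C_2)$: the prefactors go the \emph{wrong} way, $\frac{1}{1+\e}<\frac{1}{1-\e}$. As $\xi\to-\infty$ one has $\bar w_0(\xi)\sim e^{2\xi}$, so the ratio $w^+_\e/w^-_\e\to\frac{1-\e}{1+\e}e^{2(C_1-C_2)}$, which is \emph{not} automatically larger than $1$ once $\e$ is comparable to $C_1-C_2$. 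The paper handles this with a separate lemma (Lemma~\ref{lem-5}): it first reduces to the $\e=0$ functions, shows $C_{0,1}(\tau),C_{0,2}(\tau)$ have distinct finite limits, then passes to Euclidean coordinates where $\xi\le\xi_1$ becomes a \emph{compact} ball, and uses a compactness/continuity argument to produce $\e_0(\xi_1,\tau_0)$. You need this extra step.

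\textbf{Part (iv) has the mechanism but the sign backwards.} Your concrete conclusion ``the outer piece is steeper at the corner'' is the opposite of what (iv) requires for $w^+_\e$. The correct picture is this: by Lemma~\ref{lem-4} the outer slope at $\xi_1$ tends to $\frac{(n-1)(n-2)}{\gamma A}-\frac{(n-1)\theta^+}{\gamma A}\frac{1}{\xi_1^2}$, while from the gluing relation one finds $C_1(\tau)\approx\e\xi_1$ (since $(1+\e)$ times the outer value $\approx\frac{(n-1)(n-2)}{\gamma A}\xi_1$ must equal $\bar w_0(\xi_1+C_1)\approx\frac{(n-1)(n-2)}{\gamma A}(\xi_1+C_1)$), so the inner slope $\frac{1}{1+\e}\bar w_0'(\xi_1+C_1)$ is approximately $\frac{(n-1)(n-2)}{(1+\e)\gamma A}-\frac{(n-1)}{(1+\e)\gamma A}\frac{(n-6)/4}{((1+\e)\xi_1)^2}$. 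At $\e=0$ the leading terms \emph{agree} and the $1/\xi_1^2$ terms differ with the correct sign precisely because $\theta^+>\frac{n-6}{4}$: the inner slope exceeds the outer slope. The role of $\e_1$ is then to be \emph{small} enough (after $\xi_1$ is fixed large) that this $1/\xi_1^2$-order gap is not destroyed---not, as you wrote, to create an $\e$-sized gap. The paper carries this out by first fixing $\xi_1$ large (to control the soliton expansion error), then choosing $\e_1$ small so the displayed inequality between the two second-order expansions holds, and finally $\tau_1$ large to absorb the $o(1)$ terms.
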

\begin{figure}
\def\svgwidth{\linewidth}{\tiny
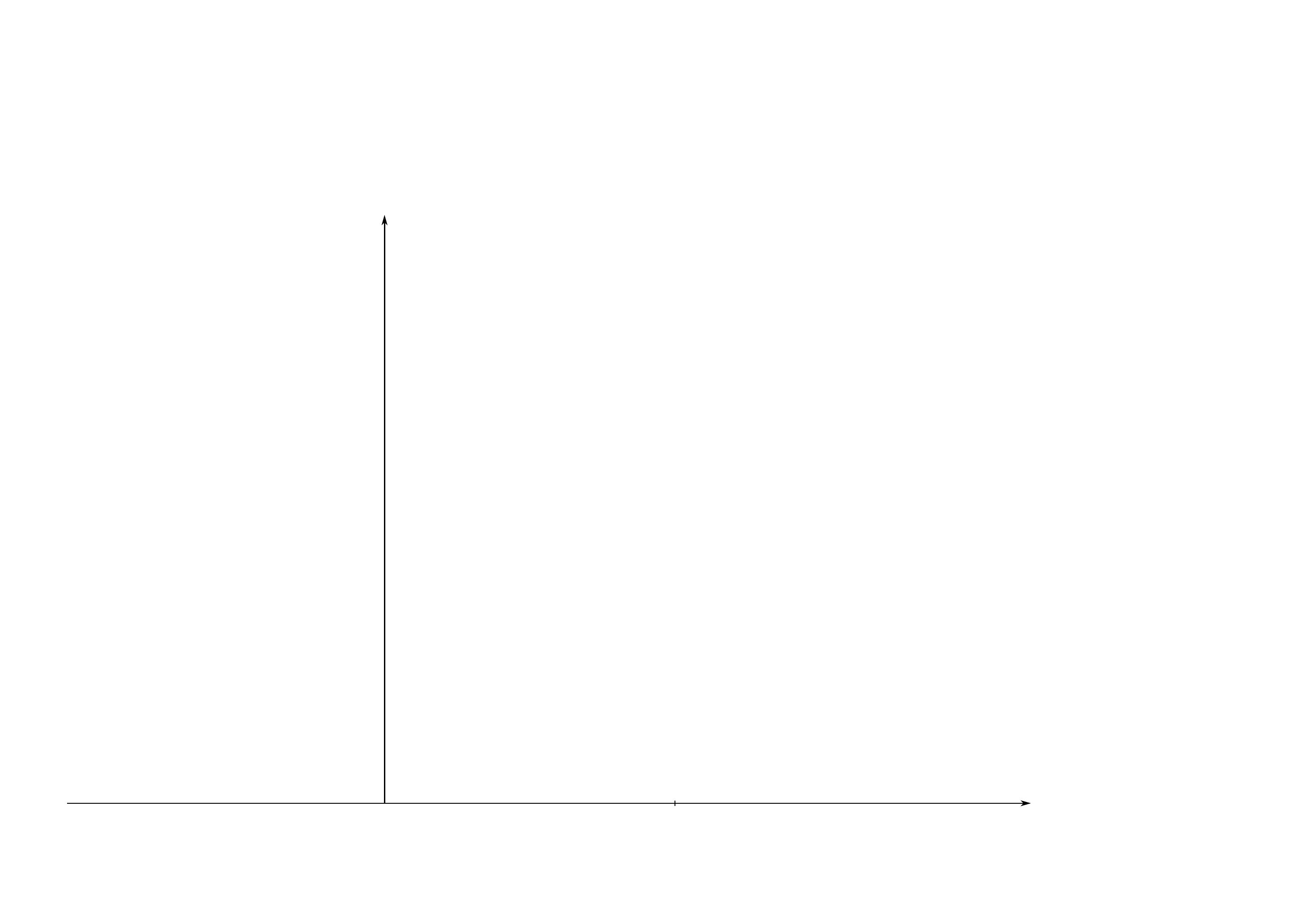
\caption{}\label{fig-2}}
\end{figure}

For the  proof of the proposition we will need the next two  lemmas. 

\begin{lemma}\label{lem-4}
For any fixed $\xi_1>\xi_0$, we have \begin{align}e^{\gamma\tau}\, \tw^\pm(A+\xi_1e^{-\gamma\tau},\tau)\to\fr{(n-1)(n-2)}{A\gamma} \xi_1 + \fr{(n-1)\theta^\pm}{A\gamma} \fr{1}{\xi_1}\end{align}and
\begin{align}\label{eqn-good1}\partial_\xi\Bigl[ e^{\gamma\tau}\tw^\pm(A+\xi e^{-\gamma\tau},\tau)\Bigr]_{\xi=\xi_1}\to\fr{(n-1)(n-2)}{A\gamma} - \fr{(n-1)\theta^\pm}{A\gamma} \fr{1}{\xi_1^2}\end{align}
as $\tau\to\infty$.
\begin{proof}
We have that $  { \lim_{\tau \to +\infty} e^{\gamma\tau} \tw_0(A+\xi_1e^{-\gamma\tau})= \fr{(n-1)(n-2)}{A\gamma} \xi_1}$  from Taylor's theorem on $\tw_0$ at $\xi=A$ and, for $\gamma >\fr{1}{2}$, $ {\lim_{\tau \to +\infty}e^{-\gamma\tau}\hat h(A+\xi_1e^{-\gamma\tau})}=\fr{(n-1)\theta}{A\gamma}\fr{1}{\xi_1}$ from Lemma  \ref{prop-tildew1at1}. This proves the first statement of the lemma for $\gamma >\fr{1}{2}$. Similarly, Taylor's Theorem on $\tw_0' $ and Lemma  \ref{prop-tildew1at1} imply the second statement for $\gamma>\fr{1}{2}$. 
In the case where $0 < \gamma \le1/2$ a statement similar to Lemma \ref{prop-tildew1at1} clearly holds for the modified $\hat h(\eta,\tau)$ and the result follows in the same way.

\end{proof} \end{lemma}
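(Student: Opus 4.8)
The plan is to expand $\tw^\pm$ explicitly near $\eta=A$ and keep track of how each term scales under the substitution $\eta=A+\xi_1 e^{-\gamma\tau}$ followed by multiplication by $e^{\gamma\tau}$. In either range of $\gamma$ one may write
\[\tw^\pm(\eta,\tau)=\hat w_0(\eta)+e^{-2\gamma\tau}\,\hat h^\pm(\eta)+R(\eta,\tau),\qquad \hat h^\pm:=\hat w_1+\theta^\pm\hat w_2,\]
where $R(\eta,\tau):=\sum_{k=2}^{N}e^{-2k\gamma\tau}\sum_{l=0}^{k}c_{k,l}v_{k,l}(\eta)$ when $0<\gamma\le1/2$ and $R\equiv0$ when $\gamma>1/2$. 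The hypothesis $\xi_1>\xi_0>0$ places $\eta_\tau:=A+\xi_1 e^{-\gamma\tau}$ in the outer region where $\tw^\pm$ is defined, but for the computation all that matters is that $\eta_\tau>A$ and $\eta_\tau\to A^+$ as $\tau\to\infty$; consequently the relevant inputs are Taylor's theorem for $\hat w_0$ at $\eta=A$, the $\eta\to A^+$ expansions of $\hat h^\pm$ and $(\hat h^\pm)'$ furnished by Lemma \ref{prop-tildew1at1}, and the continuity of the $v_{k,l}$ at $\eta=A$.

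For the first limit I would multiply by $e^{\gamma\tau}$ and estimate the three pieces separately. Since $\hat w_0(A)=0$ and, differentiating \eqref{eq-tildew0}, $\hat w_0'(A)=\tfrac{(n-1)(n-2)}{\gamma A}$, Taylor's theorem gives $e^{\gamma\tau}\hat w_0(\eta_\tau)=\hat w_0'(A)\,\xi_1+O(e^{-\gamma\tau})\to\tfrac{(n-1)(n-2)}{\gamma A}\xi_1$. By Lemma \ref{prop-tildew1at1}, $\hat h^\pm(\eta)=\tfrac{(n-1)\theta^\pm}{\gamma A}\tfrac{1}{\eta-A}+o\bigl(\tfrac{1}{\eta-A}\bigr)$ as $\eta\to A^+$, so $e^{\gamma\tau}\cdot e^{-2\gamma\tau}\hat h^\pm(\eta_\tau)=e^{-\gamma\tau}\hat h^\pm(\eta_\tau)\to\tfrac{(n-1)\theta^\pm}{\gamma A}\tfrac{1}{\xi_1}$. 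Finally each $v_{k,l}(\eta_\tau)$ stays bounded while the prefactor $e^{\gamma\tau}e^{-2k\gamma\tau}=e^{(1-2k)\gamma\tau}\to0$ because $k\ge2$, so $e^{\gamma\tau}R(\eta_\tau,\tau)\to0$. Adding the three contributions yields the first claimed limit.

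For the second limit I would first observe the chain-rule identity $\partial_\xi\bigl[e^{\gamma\tau}\tw^\pm(A+\xi e^{-\gamma\tau},\tau)\bigr]=(\tw^\pm)_\eta(A+\xi e^{-\gamma\tau},\tau)$, the factors $e^{\pm\gamma\tau}$ cancelling, so the quantity to evaluate is simply $(\tw^\pm)_\eta(\eta_\tau,\tau)=\hat w_0'(\eta_\tau)+e^{-2\gamma\tau}(\hat h^\pm)'(\eta_\tau)+R_\eta(\eta_\tau,\tau)$. Here $\hat w_0'(\eta_\tau)\to\hat w_0'(A)=\tfrac{(n-1)(n-2)}{\gamma A}$ by continuity; by Lemma \ref{prop-tildew1at1}, $(\hat h^\pm)'(\eta)=-\tfrac{(n-1)\theta^\pm}{\gamma A}\tfrac{1}{(\eta-A)^2}+o\bigl(\tfrac{1}{(\eta-A)^2}\bigr)$, whence $e^{-2\gamma\tau}(\hat h^\pm)'(\eta_\tau)\to-\tfrac{(n-1)\theta^\pm}{\gamma A}\tfrac{1}{\xi_1^2}$; and $R_\eta(\eta_\tau,\tau)=\sum e^{-2k\gamma\tau}c_{k,l}v_{k,l}'(\eta_\tau)\to0$ since $v_{k,l}'$ is bounded near $\eta=A$ and $k\ge2$. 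Summing gives \eqref{eqn-good1}.

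There is no genuine obstacle here: the argument is bookkeeping that matches the prefactor $e^{\gamma\tau}$ against the singular orders of $\hat h^\pm$ and its derivative at $\eta=A^+$. The only points that deserve a word are (i) in the range $0<\gamma\le1/2$ one must note that Lemma \ref{prop-tildew1at1} applies verbatim to the combination $\hat h^\pm=\hat w_1+\theta^\pm\hat w_2$ entering the modified barrier, and that the $v_{k,l}$-terms — regular at $\eta=A$ and carrying a factor $e^{-2k\gamma\tau}$ with $k\ge2$ — contribute nothing to either limit; and (ii) the expansions that are valid only for large $\eta$, namely Lemmas \ref{prop-tildew1infty} and \ref{remark-preciseasymp}, play no role here, since the whole computation takes place in the regime $\eta\to A^+$.
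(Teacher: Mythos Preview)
Your proof is correct and follows essentially the same approach as the paper: split $\tw^\pm$ into $\hat w_0$, the correction $e^{-2\gamma\tau}\hat h^\pm$, and (for $\gamma\le1/2$) the higher-order $v_{k,l}$ terms, then use Taylor's theorem on $\hat w_0$ at $\eta=A$ together with the $\eta\to A^+$ asymptotics of Lemma~\ref{prop-tildew1at1} for $\hat h^\pm$ and $(\hat h^\pm)'$. Your treatment is in fact more explicit than the paper's---you spell out the chain-rule cancellation for the derivative and dispose of the $v_{k,l}$ terms directly via boundedness near $\eta=A$ and the prefactor $e^{(1-2k)\gamma\tau}$---whereas the paper simply asserts that the analogue of Lemma~\ref{prop-tildew1at1} holds for the modified $\hat h$.
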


Although we haven't chosen  $\xi_1$ yet, we will next check that $w^+$ stays above $w^-$, for all small $\e>0$.

\begin{lemma}\label{lem-5}For any fixed $\xi_1$ and $\tau_0\in \R$, there exists $\e_0=\e_0(\xi_1,\tau_0)>0$ such that for all $0<\e<\e_0$, 
\begin{equation*}w^+_\e(\xi,\tau)>w^-_\e(\xi,\tau), \qquad \mbox{{\em for} }\,  (\xi,\tau) \in(-\infty,\infty) \times [\tau_0,+\infty). \end{equation*}

\begin{proof} Since  $\tw_2$ is a positive function, we have  $\tw^+>\tw^-$ and hence by the definition \eqref{eq-w+w-}
we have $$w^+_\e>w^-_\e\quad\text{ on }\ \xi\ge\xi_1\text{ and }\tau\ge\tau_0. $$ 
On the other hand, it  is obvious from the definition of $\bw^{\pm}_\e$ that \begin{equation}\label{eqn-we}
\bw^+_\e>\fr{1}{1+\e}\, \bw^+_0 \quad \text{ and } \quad \fr{1}{1-\e}\, \bw^-_0 > \bw^-_\e
\end{equation}
in the remaining region $\xi < \xi_1$ and $\tau \geq \tau_0$.  Here, $\bw^+_0$, $\bw^-_0$ are $\bw^+_\e$, $\bw^-_\e$ with $\e=0$.
Thus, it suffices to find small $\e_0>0$, depending on $\xi_1$,  such that \begin{equation}\label{eq-lem3-1}\fr{\bw^+_0(\xi,\tau)}{1+\e_0}>
\fr{\bw^-_0(\xi,\tau)}{1-\e_0}\qquad\text{ on }\ \xi\le\xi_1\ \, \text{ and } \ \, \tau\ge\tau_0.\end{equation}
To this end,  we will first show that if $C_{1,0}, C_{2,0}$ are defined by \eqref{eq-gluerelation} when $\e=0$, then $C_{1,0}>C_{2,0}$ and, as $\tau\to +\infty$,
\be\label{eqn-infty5}
C_{0,1}(\tau)\to C_{0,1,\infty}, \quad C_{0,2}(\tau)\to C_{0,2,\infty}  \quad \mbox{with} \quad  C_{0,1,\infty}>C_{0,2,\infty}.
\ee
Indeed,  this readily follows from the definition \eqref{eq-gluerelation}, $\tw^+ > \tw^-$ and the fact that  as $\tau \to +\infty$, 
$$e^{\gamma\tau}\tw^\pm(A+\xi_1e^{-\gamma\tau},\tau)\to\fr{(n-1)(n-2)}{A\gamma} \xi_1 + \fr{(n-1)\theta^\pm}{A\gamma} \fr{1}{\xi_1},$$ with  $\theta^+>\theta^-$. This in particular implies $\bar w_0^+ > \bar w_0^-$.

To conclude \eqref{eqn-we}, we will now use  \eqref{eqn-infty5}  and the fact that under the coordinate change \eqref{eq-uw} 
where $\xi=\ln r$ the 
functions $\bar w^{\pm}_0(\xi,\tau)$  are mapped into the functions $\bar U^{+}(r,\tau)>\bar U^{-}(r,\tau)$  given by 
$$\begin{aligned}&(\bar U^+)^{1-m}(r,\tau):= r^{-2} \bw^+_0(\ln r, \tau) = e^{2C_{0,1}(\tau)}\bar U^{1-m}(re^{C_{0,1}(\tau)})\\&(\bar U^-)^{1-m}(r,\tau) := r^{-2} \bw^-_0(\ln r, \tau)=e^{2C_{0,2}(\tau)}\bar U^{1-m}(re^{C_{0,2}(\tau)})\end{aligned}$$
where  under this  transformation   the region $\xi \leq \xi_1$ corresponds to the  compact region $\{x \in \R^n \, | \, r=|x| \le e^{\xi_1}\}$ 
of $\R^n$.  Here, recall that $\bar U^{1-m}(r) = r^{-2}\,  \bw_0(\ln r)$.
We then conclude, using \eqref{eqn-infty5} that there exists small $\e_0(\xi_1,\tau_0)$ such that $$\fr{1}{1+\e_0}(\bar U^+)^{1-m}>\fr{1}{1-\e_0}(\bar U^-)^{1-m} \quad\text{ on }r\le e^{\xi_1}\text{ and }\tau\ge\tau_0$$ showing   \eqref{eq-lem3-1}.
\end{proof}
\end{lemma}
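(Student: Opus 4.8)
The statement to prove is Lemma \ref{lem-5}: for any fixed $\xi_1$ and $\tau_0$, there is $\e_0=\e_0(\xi_1,\tau_0)>0$ so that $w^+_\e>w^-_\e$ on $(-\infty,\infty)\times[\tau_0,\infty)$ for all $0<\e<\e_0$. The strategy is to split the domain at $\xi=\xi_1$, handle the outer piece $\xi\ge\xi_1$ by a direct monotonicity-in-$\theta$ argument, and handle the inner piece $\xi\le\xi_1$ by reducing to the $\e=0$ case and exploiting compactness in the Euclidean ($\R^n$) picture. The main obstacle is the inner region: there one must show the $\e=0$ profiles are strictly ordered \emph{uniformly} in $\tau\ge\tau_0$ on the (non-compact in $\xi$, but compact in $|x|$) set, and then absorb the small $\e$-perturbation.

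\textbf{Step 1: the outer region $\xi\ge\xi_1$.} For $\xi\ge\xi_1$ (equivalently $\eta\ge A+\xi_1e^{-\gamma\tau}$) the functions $w^\pm_\e$ are, by \eqref{eq-tildew1+}, just $e^{\gamma\tau}\tw^\pm(A+\xi e^{-\gamma\tau},\tau)$, and $\tw^+,\tw^-$ differ only through $\theta^+>\theta^-$ via the term $e^{-2\gamma\tau}\theta^\pm\tw_2(\eta)$ (the higher-order corrections $c_{k,l}v_{k,l}$ do not depend on $\theta$). Since $\tw_2>0$ on $\eta>A$ (chosen so in Section \ref{sec-outer}) and $e^{-2\gamma\tau}>0$, we get $\tw^+>\tw^-$ pointwise on the outer region for every $\tau$, hence $w^+_\e>w^-_\e$ on $\{\xi\ge\xi_1\}\times[\tau_0,\infty)$ with no restriction on $\e$. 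This step is routine.

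\textbf{Step 2: reduce the inner region to the $\e=0$ profiles.} On $\xi\le\xi_1$ we have $w^+_\e=\frac{1}{1+\e}\bw_0(\xi+C_1(\tau))$ and $w^-_\e=\frac{1}{1-\e}\bw_0(\xi+C_2(\tau))$. From the obvious bounds $\bw^+_\e\ge\frac{1}{1+\e}\bw^+_0$ and $\frac{1}{1-\e}\bw^-_0\ge\bw^-_\e$ (display \eqref{eqn-we}), it suffices to produce $\e_0>0$ with
\begin{equation*}
\frac{\bw^+_0(\xi,\tau)}{1+\e_0}>\frac{\bw^-_0(\xi,\tau)}{1-\e_0}\qquad\text{on }\xi\le\xi_1,\ \tau\ge\tau_0.
\end{equation*}
Here $\bw^\pm_0$ use the functions $C_{1,0}(\tau),C_{2,0}(\tau)$ defined by the gluing relation \eqref{eq-gluerelation} with $\e=0$. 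First I would record that $C_{1,0}>C_{2,0}$ on $\tau\ge\tau_0$ and that both converge, as $\tau\to\infty$, to finite limits $C_{0,1,\infty}>C_{0,2,\infty}$; this follows from \eqref{eq-gluerelation}, from $\tw^+>\tw^-$, and from Lemma \ref{lem-4} (which gives $e^{\gamma\tau}\tw^\pm(A+\xi_1e^{-\gamma\tau},\tau)\to\frac{(n-1)(n-2)}{A\gamma}\xi_1+\frac{(n-1)\theta^\pm}{A\gamma}\frac{1}{\xi_1}$ with $\theta^+>\theta^-$) together with strict monotonicity and continuity of $\bw_0$. In particular $\bw^+_0>\bw^-_0$ on $\xi\le\xi_1$, $\tau\ge\tau_0$.

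\textbf{Step 3: uniform strict ordering via the Euclidean compact picture, and absorbing $\e$.} Under $\xi=\ln r$ the maps $\bw^\pm_0(\cdot,\tau)$ become $(\bar U^\pm)^{1-m}(r,\tau)=e^{2C_{0,i}(\tau)}\bar U^{1-m}(re^{C_{0,i}(\tau)})$, and $\{\xi\le\xi_1\}$ becomes the \emph{compact} set $\{|x|\le e^{\xi_1}\}$. Since $C_{0,1}(\tau),C_{0,2}(\tau)$ range over a compact subinterval of $\R$ (continuous on $[\tau_0,\infty)$ with finite limits) and $\bar U^{1-m}$ is a fixed smooth positive function, the ratio $(\bar U^+)^{1-m}(r,\tau)/(\bar U^-)^{1-m}(r,\tau)$ is continuous in $(r,\tau)$ on the compact-in-$r$, ``compactified-in-$\tau$'' domain, is $>1$ everywhere by Step 2, and hence is bounded below by some $1+2c>1$ uniformly in $r\le e^{\xi_1}$, $\tau\ge\tau_0$. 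Choosing $\e_0=\e_0(\xi_1,\tau_0)>0$ small enough that $\frac{1-\e_0}{1+\e_0}>1/(1+2c)$ gives $\frac{1}{1+\e_0}(\bar U^+)^{1-m}>\frac{1}{1-\e_0}(\bar U^-)^{1-m}$ on the compact set, which is exactly \eqref{eq-lem3-1}. Combining with Step 1 yields $w^+_\e>w^-_\e$ on all of $(-\infty,\infty)\times[\tau_0,\infty)$ for $0<\e<\e_0$, as claimed. The one point requiring care is the uniformity in $\tau$ in Step 3; this is where the finiteness of the limits $C_{0,i,\infty}$ (hence compactness of the parameter range of the translations) is essential, and it is the only nontrivial ingredient.
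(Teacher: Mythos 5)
Your proposal is correct and follows essentially the same route as the paper's proof: outer region via $\tw_2>0$ and $\theta^+>\theta^-$, reduction of the inner region to the $\e=0$ profiles, the ordering and convergence of $C_{0,1},C_{0,2}$ from the gluing relation and Lemma \ref{lem-4}, and the passage to the Euclidean picture where $\{\xi\le\xi_1\}$ becomes compact. Your Step 3 merely spells out the uniformity/compactness argument that the paper states more tersely, so there is nothing to add.
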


We are now ready to proceed to the proof of Proposition \ref{prop-main1}.

\begin{proof}[Proof of Proposition \ref{prop-main1}] 
We have to find $\xi_1$, $\e_1$ and $\tau_1(\e)$ for each $0<\e<\e_1$. Notice that $\xi_0$ and $\tau_0$ come from Proposition \ref{prop-outer} and \ref{prop-outer2} and they are fixed throughout the proof. As long as $\tau>\tau_0$ and $0<\e<1$, $w^+_\e$ and $w^-_\e$ are well defined and part (ii) follows from their construction.  
For $\xi_1>\xi_0>0$ to be determined later, we have $\e_0(\xi_1)>0$ from Lemma \ref{lem-5} so that part (i) is true for $0<\e<\e_0$ and $\tau>\tau_0$. In summary, we may choose $\e_1\le \e_0(\xi_1)$ and any $\tau(\e)>\tau_0$ for undetermined $\xi_1>\xi_0>0$ so that part (i) and (ii) are always true. 
Before going to show (iv), let us recall asymptotic properties of $\bar w_0$ shown in \cite{CD}. As $\xi \to \infty $, we have \be\label{eq-barw_0}\begin{aligned} 
\bw_0(\xi)&= \fr{(n-1)(n-2)}{A\gamma}\xi+ \fr{(n-1)(n-6)}{4A\gamma}\fr{1}{\xi}+o(\xi^{-1})\\
\bw_0'(\xi)&= \fr{(n-1)(n-2)}{A\gamma} - \fr{(n-1)(n-6)}{4A\gamma}\fr{1}{\xi^2}+o(\xi^{-2}).
 \end{aligned}\ee

 Let us just check that $\lim_{\xi\to\xi_1-} ( w^+_\e)_{\xi} (\xi,\tau) 
>  \lim_{\xi\to\xi_1+} (w^+_\e)_{\xi} (\xi,\tau)$, as a similar argument holds for the other inequality.
 By the gluing condition and Lemma \ref{lem-4}, we have that for $\xi_1>\xi_0$, 
 $$w^+_\e (\xi_1, \tau) = (1+\e)^{-1} \bar w_0 (\xi_1+C_1(\tau))  \to\fr{(n-1)(n-2)}{A\gamma} \xi_1 + \fr{(n-1)\theta^\pm}{A\gamma} \fr{1}{\xi_1} $$
as $\tau \to \infty$. 
Let's assume $\e<1$. Invoking \eqref{eq-barw_0}, we may choose a large $\xi_1>\xi_0$  so that the following holds  independently from $\e$

\begin{itemize}
\item $\limsup_{\tau\to\infty}|C_1(\tau)- \e \xi_1| \le 1 $
\item $\liminf_{\tau\to\infty}(1+\e)^{-1}\bar w_0'(\xi_1+ C_1(\tau)) > \fr{(n-1)(n-2)}{(1+\e)A\gamma} - \fr{(n-1)}{(1+\e)A\gamma}\fr{2^{-1}(\theta^+ + \fr{n-6}{4})}{((1+\e)\xi_1)^2}.$ 
\end{itemize}
Continuing with this  choice of $\xi_1$, we may find small $\e_1>0$ so that $\e_1< \min(\e_0(\xi_1),1)$ and, for all $\e<\e_1$, $$ \fr{(n-1)(n-2)}{(1+\e)A\gamma} - \fr{(n-1)}{(1+\e)A\gamma}\fr{2^{-1}(\theta^+ + \fr{n-6}{4})}{((1+\e)\xi_1)^2}> \fr{(n-1)(n-2)}{A\gamma} - \fr{(n-1)}{A\gamma}\fr{\theta^+}{\xi_1^2}.$$
Since $$\lim_{\xi\to\xi_1-}\fr{\partial}{\partial \xi}w^+_\e(\xi,\tau) = (1+\e)^{-1}\bar w_0'(\xi_1+ C_1(\tau))$$ and $$\lim_{\xi\to\xi_1+}\fr{\partial}{\partial \xi}w^+_\e(\xi,\tau) = \partial_\xi\Bigl[ e^{\gamma\tau}\tw^+(A+\xi e^{-\gamma\tau},\tau)\Bigr]_{\xi=\xi_1},$$
the second part of Lemma \ref{lem-4} and above observation proves (iv) for a large $\tau_1$.

In showing (iii), we only need to check this in the inner region as we assume $\xi_1>\xi_0$ and $\tau_1>\tau_0$. By Proposition \ref{prop-inner}, 
it suffices to show for each fixed $\xi_1$ and $0<\e<\e_1$ there exists $\tau_1 \gg 1$ such that   
\be\label{eqn-C12-2}
|C'_1(\tau)| \leq M  \quad \mbox{and} \quad  |C'_2(\tau)| \leq M, \quad \mbox{for} \,\, \tau \geq \tau_0 
\ee
for some constant $M$. 
 We will actually show that $\lim_{\tau \to +\infty} C'_i(\tau) =0$, $i=1,2$ which 
yields \eqref{eqn-C12-2}. Lets prove  this for $C_1$, as the proof for $C_2$ is identical.
Recall   that 
\bee 0<e^{\gamma\tau}\, \tw^+(A+\xi_1e^{-\gamma\tau},\tau) = \bw^+_\e (\xi_1,\tau)=\fr{1}{1+\e}\bw_0(\xi_1+C_1(\tau))
\eee 
For  $\gamma>1/2$, 
differentiating  in  $\tau$  the LHS using that 
$e^{\gamma\tau}\, \tw^+=  e^{\gamma\tau} \, \tw_0 
+ e^{-\gamma\tau}\, \hat h$, we obtain
$$\mbox {LHS}  = \gamma \big ( e^{\gamma\tau}\, \tw_0-\xi_1\, \tw_0' \big )(A+\xi_1e^{-\gamma\tau})  -\gamma\, 
\big ( e^{-\gamma\tau}\hat h +\xi_1 e^{-2\gamma\tau}\hat h' \big ) (A+\xi_1e^{-\gamma\tau}).$$ 
Both  terms converge  to zero,  as $\tau\to\infty$,  by Taylor's theorem for $\tw_0$, $\tw_0'$  and the asymptotics in Lemma  \ref{prop-tildew1at1}.
The same convergence could be proven similarly for $0<\gamma \le 1/2$ as additional terms multiplied by $e^{\gamma\tau}$ are very small and their $\tau$-derivatives converges to zero at the point $(A+\xi_1e^{-\gamma\tau},\tau)$.   At the same time, for any $\gamma>0$, if we take derivative of RHS we obtain  
$$\mbox {RHS} = \fr{1}{1+\e}\bw_0'(\xi_1+C_1(\tau))\, C_1'(\tau).$$
Since the smooth function $C_1(\tau)$ converges as $\tau\to\infty$ and hence 
$$\bw_0'(\xi_1+C_1(\tau))\to \bw_0'(\xi_1+\lim_{\tau\to\infty}C_1(\tau))>0$$ 
this concludes that $C_1'(\tau)\to0$ as $\tau\to\infty$ and hence bounded for $\tau \gg 1$.  The same argument also applies to  $C_2(\tau)$.
Thus  \eqref{eqn-C12-2} holds.

Finally,  by the arguments above and  Propositions \ref{prop-outer}, \ref{prop-outer2} and  \ref{prop-inner}, we can find $\tau_1 \geq \tau_0$ which makes all the statements in our  proposition true.

\end{proof}

We will finish this section with the following result which is an immediate consequence of the comparison principle  and Proposition \ref{prop-main1}. 

\begin{thm}\label{thm-main1}
Let $\xi_1$, $\e_1$ and $\tau_1=\tau_1(\e)$ are such  Proposition \ref{prop-main1} holds. Assume that a  given conformally flat initial metric $g_0 = u_0^{1-m}(x) \, \delta_{ij} $ is bounded above and below by $w^+_\e(\xi,-\ln T)$ and $w^-_\e(\xi,-\ln T)$,  for some $0<\e<\e_1$ and $0<T<e^{-\tau_1}$,  via the coordinate change 
\be\label{eqn-changev}
w(\xi,\tau) = |x|^2 \, u(x,t), \quad \xi = \ln |x| - A\, e^{\gamma \tau}, \,\,\, \tau = - \ln (T-t)
\ee
at $t=0$. That is  
\be\label{eqn-ab0}
w^-_\e(\xi,-\ln T)\le \frac{|x|^2 u_0^{1-m}(x) }{T^{1+\gamma}} \le w^+_\e(\xi,-\ln T)
\ee
holds, with $\xi=\ln |x|-AT^{-\gamma}.$
Then, the solution of the Yamabe flow \eqref{eq-flatyamabe} exists on the time interval $(0,T)$ and it  is bounded between $w^+_\e(\xi,\tau)$ and $w^-_\e(\xi,\tau)$, that is 
\be\label{eqn-abt} w^-_\e(\xi,-\ln (T-t))\le \fr{ |x|^2\, u^{1-m}(x,t)}{(T-t)^{1+\gamma}} \le {w^+_\e(\xi,-\ln (T-t))}
\ee
with $\xi=\ln |x|-A(T-t)^{-\gamma}$. 

\begin{proof} Immediate by Proposition \ref{prop-main1} and  the comparison principle.\end{proof}\end{thm}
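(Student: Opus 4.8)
The plan is to deduce the statement from the parabolic comparison principle applied to the solution $u(x,t)$ of \eqref{eq-flatyamabe} and to the barriers obtained by pulling $w^{\pm}_\e$ back through the change of variables \eqref{eqn-changev}. Since the substitutions \eqref{eqn-cv1} and \eqref{eqn-cv2} are smooth and invertible for every $t<T$, the properties $I[w^{+}_\e]>0$, $I[w^{-}_\e]<0$ established in Proposition \ref{prop-main1}(iii) transfer verbatim first to the operator of \eqref{eq-w} in the $(s,t)$ variables and then, via \eqref{eq-uw}, to \eqref{eq-flatyamabe}: defining $\mathcal U^{\pm}(x,t)$ by $|x|^2(\mathcal U^{\pm})^{1-m}(x,t)=(T-t)^{1+\gamma}w^{\pm}_\e(\xi,-\ln(T-t))$ with $\xi=\ln|x|-A(T-t)^{-\gamma}$, the function $\mathcal U^{+}$ is a super-solution and $\mathcal U^{-}$ a sub-solution of \eqref{eq-flatyamabe} on $\R^n\times(0,T)$, and by Proposition \ref{prop-main1}(i) they satisfy $0<\mathcal U^{-}<\mathcal U^{+}$ there. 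The hypothesis \eqref{eqn-ab0} is precisely $\mathcal U^{-}(\cdot,0)\le u_0\le\mathcal U^{+}(\cdot,0)$, and the asserted conclusion \eqref{eqn-abt} is $\mathcal U^{-}(\cdot,t)\le u(\cdot,t)\le\mathcal U^{+}(\cdot,t)$ for $0<t<T$.

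The only point requiring care is that $w^{\pm}_\e$ is merely Lipschitz along the moving hypersurface $\{\xi=\xi_1\}$, i.e. $\{|x|=\exp(\xi_1+A(T-t)^{-\gamma})\}$, on each side of which it is a classical super/sub-solution. Here one uses the sign of the gradient jump in Proposition \ref{prop-main1}(iv): for $w^{+}_\e$ the left $\xi$-derivative exceeds the right one, so the corner is concave (peak-like), hence no smooth function can touch $w^{+}_\e$ from below at that corner and the viscosity super-solution inequality is vacuously satisfied there; symmetrically the corner of $w^{-}_\e$ is convex and admits no smooth test function from above. Equivalently, writing \eqref{eq-w} as $\partial_t\Phi(w)=\partial_{ss}\Psi(w)-(\tfrac{n-2}{2})^2\Psi(w)$ with $\Phi,\Psi$ smooth increasing, $\partial_{ss}\Psi(w^{+}_\e)$ carries a negative Dirac mass at $\xi_1$, consistent with the distributional inequality $\partial_t\Phi(w^{+}_\e)\ge\partial_{ss}\Psi(w^{+}_\e)-(\tfrac{n-2}{2})^2\Psi(w^{+}_\e)$. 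Thus $\mathcal U^{+}$ and $\mathcal U^{-}$ are admissible generalized super- and sub-solutions across the corner.

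It remains to invoke the comparison principle for \eqref{eq-flatyamabe} in the class of complete non-compact conformally flat solutions with cylindrical behaviour at spatial infinity, as set up in \cite{DS1,DKS}. This applies because $w^{+}_\e$ and $w^{-}_\e$ share the leading profile $\hat w_0$ as $\eta\to\infty$, the correction terms decaying by Lemmas \ref{prop-tildew1infty} and \ref{remark-preciseasymp} and the functions $v_{k,l}$ vanishing at infinity; hence the two barriers pinch together at $|x|=\infty$, the datum $u_0$ inherits that cylindrical asymptotics, and no growth hypothesis at infinity is needed to run the maximum principle. Short-time existence of $u$ from $g_0$ is standard local theory (the trapping endows $g_0$ with bounded geometry and the required tail), and on the maximal existence interval $(0,T_{\max})$ the bound $\mathcal U^{-}\le u\le\mathcal U^{+}$ holds by comparison; since $\mathcal U^{\pm}$ stay smooth, positive and finite on $\R^n\times[0,T)$, interior parabolic estimates prevent any singularity of $u$ before $T$, so $T_{\max}\ge T$ and \eqref{eqn-abt} holds on all of $(0,T)$. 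The main obstacle here is not computational but conceptual: correctly setting up the comparison principle on the non-compact domain together with the corner; once Proposition \ref{prop-main1} is available the conclusion is immediate.
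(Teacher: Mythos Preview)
Your proposal is correct and follows exactly the approach the paper intends: the paper's own proof is the single line ``Immediate by Proposition \ref{prop-main1} and the comparison principle,'' and you have simply unpacked what that entails---pulling the barriers back through the coordinate changes, handling the Lipschitz corner at $\xi=\xi_1$ via the derivative-jump conditions of Proposition \ref{prop-main1}(iv), and invoking comparison on the non-compact domain together with existence. There is no difference in strategy, only in the level of detail.
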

\section{Asymptotic shape of the singularity  in the inner region and  geometric properties}\label{sec-asymptoticshape}

Throughout this  section we will fix $\xi_1>0$ and $\e>0$  so that $w_\e^+(\xi,\tau)$   
and $w^-_\e(\xi,\tau)$ given by \eqref{eq-tildew1+} are barriers in view of Proposition \ref{prop-main1} and Theorem \ref{thm-main1}.  To simplify the notation we will  denote  them by $w^+(\xi,\tau)$ and $w^-(\xi,\tau)$ respectively. 
They are super and sub-solutions  of equation \eqref{eq-tildew} on $\R \times [\tau_1,\infty)$, respectively.

\smallskip 
We will first prove that  if  our initial conformally flat metric of the Yamabe flow \eqref{eq-flatyamabe} $u_0(\cdot)$ 
 is bounded from above and below by $w^+(\cdot,-\ln T)$ and $w^-(\cdot,-\ln T)$,  for some $-\ln T\ge \tau_1$ (c.f.
 \eqref{eqn-ab0}),   then the rescaled solution converges to a steady gradient soliton $\bar w_0(\xi)$, which is 
the unique entire solution of the equation \eqref{eqn-soliton1}
with asymptotic behavior \eqref{eqn-soliton2} as $\xi \to \infty$.

Since we are not  assuming  that our solution $u(x,t)$ of  \eqref{eq-flatyamabe}  is radially  symmetric,  it is more convenient to work in 
euclidean coordinates on $\R^n$, rather than cylindrical coordinates. 
We have seen that in order to see the steady state $\bar w_0$ in the {\em inner region} one needs to perform the coordinate change \eqref{eqn-cv2} on radially symmetric solutions in cylindrical coordinates. Under the transformation \eqref{eq-uw} which brings us back to the plane, this change of
variables corresponds the  coordinate change \eqref{eqn-u10}-\eqref{eqn-u11}  
which transforms a solution $u(x,t)$ of \eqref{eq-flatyamabe} to a solution $\bar u(y,l)$ of equation 
\be\label{eqn-ry1}
\partial_l \bar u -  \fr{1+\gamma}{(1-m)\gamma} \fr{\bar u}{l} =   \fr{n-1}{m}\Delta_y \bar u ^m + \gamma A (y \cdot \nabla_y \bar u) + \fr{2\gamma A}{1-m} \bar u.
\ee
We denote $\bar U(y)$ the steady soliton $\bar w_0$ in euclidean coordinates, namely 
$$\bar U(y)^{1-m}= |y|^{-2} \bw_0(\ln |y|).$$  This is the unique radial solution of 
\begin{equation}\label{eq-bu0} \fr{n-1}{m} \, \Delta u^m + \gamma A (y\cdot \nabla u )+ \fr{2\gamma A}{1-m}u =0
\end{equation} with asymptotic behavior 
\be\label{eqn-asy5}
u^{1-m}(y)=\fr{1}{|y|^2} \Bigl(\fr{(n-1)(n-2)}{\gamma A} \ln |y| + o(1)\Bigr).
\ee

We will next  prove the following result.

\begin{thm}\label{thm-main2} 
Under the assumptions  of Theorem \ref{thm-main1}, the rescaled solution $\bar u(y,l)$ converges, as $l \to +\infty$,  smoothly on compact sets of  $\R^n$   to the radial steady soliton $\bar U^{1-m}(y)$.

\begin{proof}
Let $l_0:= \gamma^{-1} T^{-\gamma}$ be the initial rescaled time, corresponding to $t=0$.   By Theorem \ref{thm-main1}, for  $l>l_0>0$ we have 
\begin{equation}
|y|^{-2}w^-(\ln |y|, \tau) \le \bar u^{1-m}(y,l) \le |y|^{-2} w^+(\ln |y|, \tau),\ l=\fr{e^{\gamma\tau}}{\gamma}.
\end{equation}
These two bounds give  upper and lower bounds away from zero for $\bar u (\cdot,l) $ on every compact set  in $\R^n$
which are uniform in time $l \geq  l_0 \gg 1$.  
Hence, by standard higher order regularity estimates for uniformly parabolic equations and a compactness argument, 
we conclude that for any sequence $l_i\to \infty$, the  solutions $\bar u_i(y,l):= \bar u (y, l_i+l)$ converge, passing to a subsequence,
 to  a limit $\bar u_\infty(y,l)$. The convergence is smooth on compact subsets of $\R^n\times \R$. Therefore, in view of \eqref{eqn-ry1} and the  uniform local upper bound of our sequence, the limit $\bar u_\infty$ is a smooth eternal solution of \begin{equation}\label{eq-Einfty}
\partial_l \bar u = \fr{n-1}{m}\, \Delta_y \bar u ^m + \gamma A (y \cdot \nabla_y \bar u) + \fr{2\gamma A}{1-m} \bar u.\end{equation}

To finish the proof we need to  show that  $$\bar u_\infty (y,l)=\bar U(y)$$which would also imply 
that our  limit is unique, thus  concluding that  $\bar u(\cdot,l)\to \bar U$,  as $l\to\infty$. 
To this  end,  we first observe that by  our barrier construction  \eqref{eq-tildew1+}, we have
\bee w^\pm(\ln |y|, \tau) \to \fr{(n-1)(n-2)}{\gamma A} \ln |y| + \fr{(n-1)}{A\gamma}\theta^\pm \fr{1}{\ln |y|}
\eee as $\tau \to +\infty$, uniformly on $e^{\xi_1}\le |y|\le K$,  for any fixed $K>e^ {\xi_1}$.
In particular, this implies that our limit $\bar u^{1-m}_\infty $ has these bounds and thus
\be\label{eq-uinftyasymp}
\bar u^{1-m}_\infty(\cdot, l)=\fr{1}{|y|^2} \Bigl(\fr{(n-1)(n-2)}{\gamma A} \ln |y| + o(1) \Bigr)\ee
as $|y|\to\infty$   uniformly in   $l \in \R$. 
For $\lambda>0$, if we denote ${\ds \bar U_\lambda(y) := \lambda^{\fr{2}{1-m}}\bar U(\lambda y)}$, this is again a radial solution of \eqref{eq-bu0} with \be\label{eq-bulambda}\bar U_\lambda^{1-m}(y)=\fr{1}{|y|^2} \Bigl(\fr{(n-1)(n-2)}{\gamma A} \ln |y| +\ln \lambda+ o(1)\Bigr).\ee
This is just a time translation of the radial steady soliton $\bar U$ and they are isometric. Since on the soliton the scalar curvature  $R>0$ everywhere, the solution pointwise decreases  as time increases  and hence $\bar U_{\lambda_1}>\bar U_{\lambda_2}$ for $\lambda_1>\lambda_2$. Thus we may define
\begin{align*}&\lambda_+:=\inf \, 
\{\lambda>0 \, \, | \, \bar U_\lambda(\cdot) \ge \bar u_\infty(\cdot, l)\text{ for all }l\}\\&\lambda_-:=\sup \{\lambda>0 \, \, | \, \bar U_\lambda(\cdot) \le \bar u_\infty(\cdot, l)\text{ for all }l\}.\end{align*} 
Our proof will finish   if we show that $\lambda_+=\lambda_-=1$. Let us prove that  $\lambda_+=1$. Since $\inf_{B(e^{\xi_1},0)}\bar U_\lambda\to \infty $ as $\lambda \to \infty$ (see  the observation in Corollary 3.3 in \cite{CD}), the   construction of $w^+$ in the inner region and \eqref{eq-bulambda} imply that we can find large $\lambda>1$ such that $\bar U_\lambda(\cdot) > \bar u_\infty(\cdot,l)$ for all $l$. By \eqref{eq-uinftyasymp} and \eqref{eq-bulambda}, $\bar U_\lambda(\cdot) \ngeq \bar u_\infty(\cdot,l)$ for $\lambda<1$. Therefore, $\lambda_+$ is a well defined number with $\lambda_+\ge1$. Assume that $\lambda_+>1$. For each $\bar U_{\lambda_+-2^{-n}}$, there is a  point $(x_n,l_n)$ with $\bar  U_{\lambda_+-2^{-n}}(x_n) < \bar U_\infty(x_n,l_n)$. Moreover, the sequence  of points $\{x_n\}$ 
 such that $\lambda_+-2^{-n}>1$  is bounded due to \eqref{eq-uinftyasymp} and \eqref{eq-bulambda}. By standard regularity estimates on the equation  \eqref{eq-Einfty}, we can find a subsequence of $(x_n,l_n)$ such that $$\bar u^*_{n_j}(x,l) := \bar u_\infty(x,l_{n_j}+l )\to \bar u^*_\infty(x,l)$$ smoothly on compact sets and  $x_{n_j}\to x^*$.

Note that  $\bar U_{\lambda_+}(x)\ge \bar u^*_\infty(x,l)$ for all $l$. 
On the other hand we have $\bar U_{\lambda_+}(x^*)= \bar u_\infty(x^*,0)$. Hence, by the strong maximum principle, 
we must have  $\bar U_{\lambda_+}(\cdot)=\bar u^*_\infty(\cdot,l)$,   for all $l$. But this   can't happen since \eqref{eq-uinftyasymp}, \eqref{eq-bulambda} holds and we have assumed that $\lambda_+>1$. 
By contradiction, this proves that $\lambda_+=1$ and $\lambda_-=1$ can be shown similarly. This concludes the proof of our theorem. 
\end{proof}
\end{thm}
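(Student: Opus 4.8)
The plan is to combine the a priori bounds coming from the barriers of Theorem~\ref{thm-main1} with a parabolic compactness argument, and then to identify the limit by a sliding argument against the one–parameter family of rescaled steady solitons. First I would work in the rescaled variables of Section~\ref{sec-asymptoticshape}, so that $\bar u(y,l)$ solves \eqref{eqn-ry1}, and use Theorem~\ref{thm-main1} to trap $\bar u^{1-m}(\cdot,l)$ between $|y|^{-2}w^-(\ln|y|,\tau)$ and $|y|^{-2}w^+(\ln|y|,\tau)$ for all $l\ge l_0$. On every compact subset of $\R^n$ this gives bounds on $\bar u$ that are uniform in $l$ and bounded away from $0$ and $\infty$, so the fast–diffusion operator $\tfrac{n-1}{m}\Delta \bar u^m$ is uniformly parabolic there. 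Standard interior estimates then yield uniform $C^\infty_{\mathrm{loc}}$ bounds, hence for any $l_i\to\infty$ the time translates $\bar u(\,\cdot\,,l_i+\cdot\,)$ subconverge in $C^\infty_{\mathrm{loc}}(\R^n\times\R)$; since the $\bar u/l$ term in \eqref{eqn-ry1} decays, the limit $\bar u_\infty$ is an eternal solution of \eqref{eq-Einfty}. The whole problem is thereby reduced to showing $\bar u_\infty(y,l)\equiv\bar U(y)$.

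Second, I would extract from the barriers the \emph{spatial asymptotics} of the limit. By construction both $w^{\pm}$ converge, uniformly on $e^{\xi_1}\le|y|\le K$ as $\tau\to\infty$, to $\tfrac{(n-1)(n-2)}{\gamma A}\ln|y|+\tfrac{(n-1)\theta^{\pm}}{A\gamma}\tfrac1{\ln|y|}$, which share the \emph{same} leading logarithmic term; squeezing $\bar u_\infty$ between them forces
\[
\bar u_\infty^{1-m}(y,l)=\frac{1}{|y|^2}\Bigl(\frac{(n-1)(n-2)}{\gamma A}\ln|y|+o(1)\Bigr),\qquad |y|\to\infty,
\]
uniformly in $l\in\R$; in particular the ``$\ln\lambda$''–type correction appearing in \eqref{eq-bulambda} vanishes for $\bar u_\infty$. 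I would then bring in the family $\bar U_\lambda(y):=\lambda^{2/(1-m)}\bar U(\lambda y)$ of rescaled steady solitons — all mutually isometric, all solving \eqref{eq-bu0}, with $\bar U_\lambda^{1-m}$ carrying the extra constant $\ln\lambda$ in its asymptotics — and record that they are strictly ordered, $\bar U_{\lambda_1}>\bar U_{\lambda_2}$ for $\lambda_1>\lambda_2$, because on the soliton the scalar curvature is positive, so the flow is pointwise strictly decreasing in time (the positivity of the scalar curvature being inherited from the positive Ricci hypothesis via the classification in \cite{CSZ,DS}).

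Third, I would run the contact argument. Set $\lambda_+=\inf\{\lambda>0:\bar U_\lambda\ge\bar u_\infty(\cdot,l)\ \forall l\}$ and $\lambda_-=\sup\{\lambda>0:\bar U_\lambda\le\bar u_\infty(\cdot,l)\ \forall l\}$. Using that $\inf_{B(e^{\xi_1},0)}\bar U_\lambda\to\infty$ as $\lambda\to\infty$ together with the asymptotics above shows $\bar U_\lambda$ lies strictly above $\bar u_\infty$ for $\lambda$ large, while the matching leading terms show $\bar U_\lambda\not\ge\bar u_\infty$ far out when $\lambda<1$; hence $\lambda_+$ is well defined and $\lambda_+\ge1\ge\lambda_-$. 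If $\lambda_+>1$, choose $(x_n,l_n)$ with $\bar U_{\lambda_+-2^{-n}}(x_n)<\bar u_\infty(x_n,l_n)$; the asymptotics (with $\lambda_+-2^{-n}>1$) keep $\{x_n\}$ bounded, so after translating in time and passing to a subsequence $\bar u_\infty(\cdot,l_{n_j}+\cdot)\to\bar u_\infty^{*}$ in $C^\infty_{\mathrm{loc}}$ with $x_{n_j}\to x^{*}$, and $\bar U_{\lambda_+}\ge\bar u_\infty^{*}$ with equality at $(x^{*},0)$. The strong maximum principle for the (now uniformly parabolic, thanks to the lower bound) linearized equation forces $\bar U_{\lambda_+}\equiv\bar u_\infty^{*}$, contradicting the normalized asymptotics since $\ln\lambda_+\neq0$. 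Thus $\lambda_+=1$, and symmetrically $\lambda_-=1$, so $\bar u_\infty\equiv\bar U$; as every subsequential limit equals $\bar U$, the full family $\bar u(\cdot,l)$ converges to $\bar U$ in $C^\infty_{\mathrm{loc}}(\R^n)$.

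I expect the rigidity step — upgrading ``$\lambda_+>1$'' to a genuine interior touching point and then invoking the strong maximum principle — to be the crux. The two points that must be handled with care are (i) that the sequence of prospective contact points stays in a compact region, which is precisely what the common leading-order asymptotics of $w^+$ and $w^-$ provide, and (ii) that the fast–diffusion nonlinearity is non-degenerate along the limit, which is guaranteed by the uniform positive lower bound on $\bar u$ inherited from the inner barrier. Everything else — the compactness, the decay of the $\bar u/l$ term, and the monotonicity $\bar U_{\lambda_1}>\bar U_{\lambda_2}$ — is comparatively routine.
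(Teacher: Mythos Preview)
Your proposal is correct and follows essentially the same route as the paper's own proof: barrier bounds $\Rightarrow$ uniform local parabolicity and compactness $\Rightarrow$ eternal limit of \eqref{eq-Einfty} $\Rightarrow$ matching leading-order asymptotics from $w^\pm$ $\Rightarrow$ sliding against the ordered family $\bar U_\lambda$ with a strong maximum principle contradiction to pin $\lambda_\pm=1$. Even the technical devices (the sequence $\lambda_+-2^{-n}$, the boundedness of the contact points via the asymptotics, and the second time-translation to produce a touching point) coincide with the paper's argument.
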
Let us remark the following. 
\begin{remark}[Scalar curvature blow up rate near the origin]  $u(x,t)$ of \eqref{eq-flatyamabe} represents conformally flat solution of the Yamabe flow  $g(t) =u^{1-m}(x,t)\, \delta_{ij} $  on $\R^n$. The metric of rescaled solution $\bar u(y,l)$ can be
written as  $$\bar u^{1-m} \delta_{ij}  = (T-t)^{-(1+\gamma)}\, \phi_t^* g(t)$$ where $\phi_t$ is a one  parameter family of diffeomorphisms 
$\phi_t(x)= e^{A(T-t)^{-\gamma}}\, x $. Therefore, Theorem \ref{thm-main2} 
can be  rephrased as convergence of the pointed manifold 
$$(\R^n, \fr{g(t)}{(T-t)^{1+\gamma}}, 0) \to (\R^n, \bar U^{1-m}\delta_{ij} , 0)$$ in Cheeger-Gromov sense.
This, in particular, implies that 
$$ |{\operatorname{Rm}}_{g(t)/(T-t)^{(1+\gamma)} } |(e^{A(T-t)^{-\gamma}}y) \to |{\text{Rm}}_{\bar U^{1-m}\delta_{ij}  } | (y)$$
 concluding that 
$$(T-t)^{1+\gamma}\, |\text{Rm}_{g(t)}| (e^{A(T-t)^{-\gamma}}y) \to |\text{Rm}_{\bar U^{1-m}\delta_{ij}  }| (y)$$
and also 
$$(T-t)^{1+\gamma} \, R_{g(t)} (e^{A(T-t)^{-\gamma}}y)\to R_{\bar U^{1-m}\delta_{ij}  } (y).$$
In particular the above implies the following blow up rate of the Riemmannian   and Scalar curvature at the origin
 $$\lim_{t\to T-} (T-t)^{1+\gamma} |\text{Rm}_{g(t)}|(0)= |{\text Rm}_{\bar U^{1-m}\delta_{ij} }| (0)=\fr{2\gamma A}{\sqrt{n(n-1)}}$$ and $$\lim_{t\to T-} (T-t)^{1+\gamma} \, R_{g(t)}(0)= R_{\bar U^{1-m}\delta_{ij} } (0)=2\gamma A.$$ 
\end{remark}

%

We will next show that the global supremum of the curvature occurs asymptotically {\em  at the  origin } 
 as $t\to T-$. 

\begin{prop}\label{prop-curvature} Under the assumptions of Theorem  \ref{thm-main1} 
we have 
\begin{equation}
\lim_{t \to T-} \Bigl[(T-t)^{1+\gamma} \sup_{x\in \R^n} |\mbox{\em  Rm}_{g(t)}|\Bigr]= \fr{2\gamma A}{\sqrt{n(n-1)}}
\end{equation}
and if  $\{(x_i,t_i)\}$ are points such that $t_i \to T$ and \begin{equation}
\lim_{i \to \infty} \Bigl[(T-t_i)^{1+\gamma} |\mbox{\em  Rm}_{g(t_i)}|(x_i)\Bigr]= \fr{2\gamma A}{\sqrt{n(n-1)}}
\end{equation} then $$\text{$y_i:=e^{-A(T-t_i)^{-\gamma}}x_i\to 0, \quad \mbox{\em as} \,\, i \to \infty$.}$$ 
In other words, $(T-t)^{-\fr{1+\gamma}{2}}\operatorname{dist}_{g(t_i)}(x_i,0)\to 0$ due to the convergence of the metric.
\end{prop}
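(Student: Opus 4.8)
The plan is to prove the two assertions separately, using the smooth convergence of Theorem~\ref{thm-main2}, the barriers of Theorem~\ref{thm-main1}, and the fact (c.f.\ \cite{CD, DS, CSZ} and the discussion in Section~\ref{sec-formal}) that on the steady soliton $\bar U^{1-m}\delta_{ij}$ the curvature $|\operatorname{Rm}|$ attains its maximum $\frac{2\gamma A}{\sqrt{n(n-1)}}$ \emph{only} at the tip $y=0$. The inequality $\liminf_{t\to T-}(T-t)^{1+\gamma}\sup_{\R^n}|\operatorname{Rm}_{g(t)}|\ge\frac{2\gamma A}{\sqrt{n(n-1)}}$ is immediate from the Remark following Theorem~\ref{thm-main2}, since that Remark gives $(T-t)^{1+\gamma}|\operatorname{Rm}_{g(t)}|(0)\to\frac{2\gamma A}{\sqrt{n(n-1)}}$.

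For the reverse inequality, fix a large constant $\Xi>\xi_1$ and, at each time $t$, split $\R^n$ into $\{|y|\le e^\Xi\}$ and $\{|y|>e^\Xi\}$ with $y=e^{-A(T-t)^{-\gamma}}x$. On the compact set $\{|y|\le e^\Xi\}$, Theorem~\ref{thm-main2} gives that $(T-t)^{1+\gamma}|\operatorname{Rm}_{g(t)}|$, viewed as a function of $y$, converges uniformly to $|\operatorname{Rm}_{\bar U^{1-m}\delta_{ij}}|(y)$, and hence
\[
\limsup_{t\to T-}\ \sup_{|y|\le e^\Xi}(T-t)^{1+\gamma}|\operatorname{Rm}_{g(t)}|=\max_{|y|\le e^\Xi}|\operatorname{Rm}_{\bar U^{1-m}\delta_{ij}}|=\frac{2\gamma A}{\sqrt{n(n-1)}},
\]
the maximum being attained at $y=0$. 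On $\{|y|>e^\Xi\}$, i.e.\ $\eta> A+\Xi e^{-\gamma\tau}$, Theorem~\ref{thm-main1} traps the conformal factor $\hat w(\eta,\tau):=e^{\tau}|x|^2u^{1-m}(x,t)$ between the radial outer barriers $\hat w^\pm=\hat w_0+O(e^{-2\gamma\tau})$, so that $\hat w=\hat w_0(\eta)\,(1+O(\Xi^{-2}))$ there. Using $R_{g(t)}=-(\ln u^{1-m})_t$ together with \eqref{eq-tildew} one computes, in the rotationally symmetric reduction,
\[
R_{g(t)}=\frac{e^{\tau}}{\hat w}\Bigl((n-1)(n-2)-(n-1)e^{-2\gamma\tau}\bigl(\tfrac{\hat w_{\eta\eta}}{\hat w}+\tfrac{n-6}{4}\tfrac{\hat w_\eta^2}{\hat w^2}\bigr)\Bigr),
\]
and the same upper bound (up to a dimensional constant) holds for $|\operatorname{Rm}_{g(t)}|$ in general, since the conformally flat curvature tensor of $g=e^{2\phi}\delta_{ij}$ is pointwise $\lesssim e^{-2\phi}(|\nabla^2\phi|+|\nabla\phi|^2)$ with $\phi=\tfrac12\ln u^{1-m}$. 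The quantities $e^{-2\gamma\tau}\hat w_{\eta\eta}$ and $e^{-2\gamma\tau}\hat w_\eta^2$ — which the barriers control only in $C^0$ — are bounded by interior parabolic estimates applied to \eqref{eq-w} in the rescalings that render it uniformly parabolic: $\hat w=e^{\tau}w$ in the variables $(\ln|x|,\tau)$ on $\{\eta\ge A+\delta\}$, and $\bar w=e^{(1+\gamma)\tau}w$ in the variables $(\xi,l)$, after a further dyadic rescaling in $\xi$, on $\{A+\Xi e^{-\gamma\tau}\le\eta\le A+\delta\}$; in each range $\hat w$, resp.\ $\bar w$, is pinched away from $0$ and $\infty$ by the barriers. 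Combining this with $\hat w_0(\eta)\sim\frac{(n-1)(n-2)}{\gamma A}(\eta-A)$ as $\eta\to A^+$ and $\eta-A=\xi e^{-\gamma\tau}$ gives, for each fixed $\Xi$,
\[
\limsup_{t\to T-}\ \sup_{|y|>e^\Xi}(T-t)^{1+\gamma}|\operatorname{Rm}_{g(t)}|\ \le\ \frac{C(n,\gamma,A)}{\Xi}.
\]
Choosing $\Xi$ large enough that $C(n,\gamma,A)/\Xi<\frac{2\gamma A}{\sqrt{n(n-1)}}$ and combining the two regions yields $\limsup_{t\to T-}(T-t)^{1+\gamma}\sup_{\R^n}|\operatorname{Rm}_{g(t)}|\le\frac{2\gamma A}{\sqrt{n(n-1)}}$, which together with the liminf proves the first identity.

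For the localization statement, suppose $t_i\to T$ and $(T-t_i)^{1+\gamma}|\operatorname{Rm}_{g(t_i)}|(x_i)\to\frac{2\gamma A}{\sqrt{n(n-1)}}$, and set $\xi_i:=\ln|x_i|-A(T-t_i)^{-\gamma}=\ln|y_i|$. If $\xi_i\not\to-\infty$, pass to a subsequence along which $\xi_i\to\xi_*\in(-\infty,+\infty]$. If $\xi_*$ is finite, then, passing to a further subsequence, $y_i\to y_*$ with $|y_*|=e^{\xi_*}>0$, and Theorem~\ref{thm-main2} forces $(T-t_i)^{1+\gamma}|\operatorname{Rm}_{g(t_i)}|(x_i)\to|\operatorname{Rm}_{\bar U^{1-m}\delta_{ij}}|(y_*)$; this would have to equal the global maximum $\frac{2\gamma A}{\sqrt{n(n-1)}}$, contradicting that the maximum is attained only at $y=0$. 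If $\xi_*=+\infty$, then for every fixed $\Xi$ we have $|y_i|>e^\Xi$ for all large $i$, so the outer estimate gives $\limsup_i(T-t_i)^{1+\gamma}|\operatorname{Rm}_{g(t_i)}|(x_i)\le C(n,\gamma,A)/\Xi$ for every $\Xi$, hence this limit is $0\ne\frac{2\gamma A}{\sqrt{n(n-1)}}$, again a contradiction. Therefore $\xi_i\to-\infty$, i.e.\ $y_i\to0$. Finally, the Cheeger--Gromov convergence of $(\R^n,(T-t)^{-(1+\gamma)}g(t),0)$ to $(\R^n,\bar U^{1-m}\delta_{ij},0)$ recorded in the Remark identifies $(T-t_i)^{-\frac{1+\gamma}{2}}\operatorname{dist}_{g(t_i)}(x_i,0)$ with the distance from $y_i$ to $0$ in the metric $\bar u^{1-m}(\cdot,l_i)\delta_{ij}$, which tends to $0$ because $y_i\to0$ and $\bar u(\cdot,l_i)\to\bar U$ in $C^\infty_{\mathrm{loc}}$.

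The step I expect to be the main obstacle is the outer-region bound: since \eqref{eq-tildew} degenerates as $\tau\to\infty$ in the $(\eta,\tau)$ variables, one must carry out the parabolic regularity in the two separate rescalings above in order to upgrade the merely $C^0$ pinching coming from the barriers to the $C^2$ control of $u$ (equivalently of $\hat w$ and $\bar w$) that is needed to estimate $|\operatorname{Rm}_{g(t)}|$, and then one must verify that this curvature scales uniformly over the \emph{whole} exterior $\{|y|>e^\Xi\}$ rather than only on fixed compact sets.
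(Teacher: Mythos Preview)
Your overall strategy---lower bound from the Remark after Theorem~\ref{thm-main2}, then split $\R^n$ into $\{|y|\le e^\Xi\}$ (handled by the smooth convergence) and $\{|y|>e^\Xi\}$ (handled by barrier pinching plus parabolic regularity), and finally the localization by contradiction---is exactly the paper's approach, and your argument for the second assertion is correct.

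The gap is the one you yourself flag: the outer-region curvature bound. You invoke interior regularity for the cylindrical equation \eqref{eq-w} to control $e^{-2\gamma\tau}\hat w_{\eta\eta}$ and $e^{-2\gamma\tau}\hat w_\eta^2$, but $u$ is \emph{not assumed radial}, so there is no function $\hat w(\eta,\tau)$ solving \eqref{eq-w} to which this applies. Your parenthetical $|\operatorname{Rm}_g|\lesssim e^{-2\phi}(|\nabla^2\phi|+|\nabla\phi|^2)$ is the right replacement, but then the regularity must be run on the full PDE \eqref{eq-flatyamabe} for $u(x,t)$, not on \eqref{eq-w}; and your two-zone split of the outer region, with one rescaling per zone, would still have to be carried out for the non-radial $u$ and checked for uniformity at the seam $\eta=A+\delta$.

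The paper bypasses both issues with a single pointwise rescaling of $u$ (Lemma~\ref{lem-curvature}): for each fixed $(y_1,t_1)$ with $|y_1|\ge r_0$ one sets
\[
v^{1-m}(z,\sigma):=\bigl(e^{A(T-t_1)^{-\gamma}}|y_1|\bigr)^2\,\frac{u^{1-m}\bigl(e^{A(T-t_1)^{-\gamma}}|y_1|\,z,\ t\bigr)}{(T-t)\,\hat w_0\bigl(A+(T-t_1)^\gamma\ln|y_1|\bigr)},
\]
with a new time $\sigma$ chosen so that $v$ solves a uniformly parabolic equation with a bounded zero-order term. The radial barriers (which still sandwich the non-radial $u$) give uniform two-sided $C^0$ bounds for $v$ on the annulus $\{\tfrac12\le|z|\le\tfrac32\}\times[-1,0]$, independently of $(y_1,t_1)$; interior estimates then bound $|\operatorname{Rm}_{v^{1-m}\delta}|$ at $(y_1/|y_1|,0)$, which unwinds to $(T-t_1)\,\hat w_0(A+(T-t_1)^\gamma\ln|y_1|)\,|\operatorname{Rm}_{g(t_1)}|\le C$. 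Dividing by $\hat w_0\sim\frac{(n-1)(n-2)}{\gamma A}(\eta-A)$ yields exactly your $C/\Xi$ estimate on $\{|y|>e^\Xi\}$, and this single rescaling replaces your two zones while handling the non-radial case directly.
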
 

The result of this proposition follows from  the following curvature estimate lemma in the outer region. This lemma is useful in the sense that it also gives the  curvature blow up rate in  other regions. For example, it shows  that  the curvature blows up in a {\em type I manner}   near the {\em infinite cylindrical region}.
\begin{lemma}\label{lem-curvature}
There exist  $C>0$,  $r_0 \gg 1$ and $0<t_0<T$ such that  the following holds 
\begin{equation}
(T-t) \, \hat w_0 \big (A+ (T-t)^{\gamma}\ln |y| \big )  \, |\mbox{\em  Rm}_{g(t)}(e^{A(T-t)^{-\gamma}} y)| \leq C\end{equation}
on   $|y| \geq r_0$ and $ t_0<t<T$.
\end{lemma}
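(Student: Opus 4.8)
The plan is to reduce the stated bound to a local interior parabolic estimate for the conformal factor. Writing $g(t)=u^{1-m}(x,t)\,\delta_{ij}$ and $\delta_{ij}=dr^2+r^2g_{S^{n-1}}$, in the cylindrical variable $s=\ln|x|$ we have $g(t)=w(s,\omega,t)\,(ds^2+g_{S^{n-1}})$ on $\R\times S^{n-1}$, where $w=|x|^2u^{1-m}$ need not be radial. Thus $g$ is conformal to the fixed cylinder metric $g_{\mathrm{cyl}}=ds^2+g_{S^{n-1}}$, namely $g=e^{2\psi}g_{\mathrm{cyl}}$ with $\psi=\tfrac12\ln w$, and since $g_{\mathrm{cyl}}$ has bounded geometry the conformal transformation law for the Riemann tensor yields
\[
|\operatorname{Rm}_{g}|_{g}\le\frac{C(n)}{w}\Bigl(1+\frac{|\nabla_{\mathrm{cyl}}w|}{w}+\frac{|\nabla^2_{\mathrm{cyl}}w|}{w}+\frac{|\nabla_{\mathrm{cyl}}w|^2}{w^2}\Bigr).
\]
So it suffices to establish, for $|y|\ge r_0$ and $t$ close to $T$, the two facts: (a) $c\,(T-t)\hat w_0(\eta)\le w\le C\,(T-t)\hat w_0(\eta)$, where $\eta=A+(T-t)^{\gamma}\ln|y|$; and (b) $|\nabla_{\mathrm{cyl}}w|+|\nabla^2_{\mathrm{cyl}}w|\le C\,w$ on the relevant set. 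Granting these, $(T-t)\hat w_0(\eta)\,|\operatorname{Rm}_{g}|\le c^{-1}w\,|\operatorname{Rm}_{g}|\le C'$, which is the lemma.

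For (a) I would take $r_0>e^{\xi_1}$, so that a point $x=e^{A(T-t)^{-\gamma}}y$ with $|y|\ge r_0$ corresponds, in the inner variable $\xi=\ln|y|$, to the outer region $\xi>\xi_1$ of Proposition~\ref{prop-main1}. There Theorem~\ref{thm-main1} gives $w=|x|^2u^{1-m}(x,t)\in\bigl[(T-t)\hat w^{-}(\eta,\tau),\,(T-t)\hat w^{+}(\eta,\tau)\bigr]$ with $\tau=-\ln(T-t)$, and from the form $\hat w^{\pm}=\hat w_0+e^{-2\gamma\tau}(\hat w_1+\theta^{\pm}\hat w_2)+\cdots$ of Section~\ref{sec-outer}, together with Lemma~\ref{prop-tildew1at1} and the inequality $\hat w^{\pm}>\tfrac{(n-1)(n-2)}{2\gamma A}(\eta-A)$ used in the proof of Claim~\ref{lem-outer1}, one reads off that $c\,\hat w_0(\eta)\le\hat w^{-}(\eta,\tau)\le\hat w^{+}(\eta,\tau)\le C\,\hat w_0(\eta)$ uniformly on the outer region once $\tau$ is large; multiplying by $(T-t)$ gives (a).

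For (b) I would run a parabolic rescaling at the local conformal scale. Fix $P=(s_P,\omega_P,t_P)$ lying over $\{|y|\ge r_0\}$ with $t_P$ near $T$, set $\eta_P=(T-t_P)^{\gamma}s_P$ and $\mu_P=(T-t_P)\hat w_0(\eta_P)$, and consider the cylinder $Q_P=\{|s-s_P|<1,\ d_{S^{n-1}}(\omega,\omega_P)<1\}\times(t_P-\e_0\mu_P,t_P]$. Using the explicit form of $\hat w_0$ and $\eta_P-A\ge(\ln r_0)(T-t_P)^{\gamma}$ one checks that, across $Q_P$, both $(T-t)$ and $\eta-A$ vary by a relative amount $O(\e_0+1/\ln r_0)$; hence by (a) the factor $w$ is comparable to the constant $\mu_P$ on all of $Q_P$, with $P$-independent constants. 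In the cylindrical coordinates the Yamabe flow \eqref{eq-flatyamabe} becomes a quasilinear equation for $w$ whose principal coefficient is a constant multiple of $w^{-1}$ (the non-radial counterpart of \eqref{eq-w}), hence uniformly parabolic on $Q_P$ with ellipticity controlled by the constants in (a), and invariant under $w\mapsto\mu_P^{-1}w$, $t\mapsto\mu_P^{-1}t$. Thus $\hat w_P(\zeta,\hat t):=\mu_P^{-1}w(s_P+\zeta,t_P+\mu_P\hat t)$ solves the same equation and is comparable to $1$ on a unit parabolic cylinder, so interior parabolic estimates (Krylov--Safonov, then Schauder) give $\|\hat w_P\|_{C^{2,\alpha}}\le C$ on a half-size cylinder, with $C$ uniform in $P$. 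Undoing the scaling at the centre produces $|\nabla_{\mathrm{cyl}}w|(P)+|\nabla^2_{\mathrm{cyl}}w|(P)\le C\mu_P\le C'w(P)$, which is (b). Taking $t_0$ with $-\ln(T-t_0)$ large enough (beyond $\tau_1$, and so that the above is valid) completes the argument.

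I expect the main obstacle to be the scale-invariance used in (b): the local scale $\mu_P=(T-t_P)\hat w_0(\eta_P)$ degenerates like $(T-t_P)^{1+\gamma}\ln|y|$ as $\eta_P\to A^{+}$, i.e.\ near the transition to the inner region, so the parabolic cylinder $Q_P$ becomes very thin in time and one must verify that the barrier bound of (a) persists over $Q_P$ with constants independent of $P$. This is what forces $r_0$ to be large and $\e_0$ small, and is the only place where a genuine computation is needed; the conformal curvature identity and the interior estimates for the rescaled, uniformly parabolic equation are standard.
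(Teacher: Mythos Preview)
Your approach is correct and essentially identical to the paper's. The paper rescales $u$ in Euclidean coordinates, setting
\[
v^{1-m}(z,\sigma)=\frac{(e^{A(T-t_1)^{-\gamma}}|y_1|)^{2}\,u^{1-m}(e^{A(T-t_1)^{-\gamma}}|y_1|\,z,\,t)}{(T-t)\,\hat w_0(A+(T-t_1)^{\gamma}\ln|y_1|)},
\]
and works on the annulus $\{1/2\le|z|\le 3/2\}\times[-1,0]$; you rescale $w$ in cylindrical coordinates on a unit $s$-ball. These are the same rescaling under $w=r^{2}u^{1-m}$, $s=\ln r$. In both versions the substance is exactly your step~(b): showing the conformal factor stays comparable to the fixed number $\mu_P=(T-t_P)\hat w_0(\eta_P)$ on a parabolic box whose time extent is proportional to $\mu_P$. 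The paper proves this as its Claim~\ref{claim-10}: first the bound $\tfrac12\hat w_0\le\hat w^{\pm}\le 2\hat w_0$ in the outer region (your part~(a)), then the estimate that the argument of $\hat w_0$ varies by a bounded factor across the box, carried out via a mean value argument on $e^{\gamma\hat w_0(A+\theta)}$ using the concavity of $\hat w_0(A+\cdot)$; this is precisely the ``only genuine computation'' you flag. Interior parabolic regularity then finishes both arguments in the same way.
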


Let us first show that Lemma \ref{lem-curvature} implies the proposition and then finish this section by proving  the lemma.
\begin{proof}[Proof of Proposition \ref{prop-curvature}]

Assuming that the Lemma \ref{lem-curvature}  holds,  then since $\hat w_0$ is increasing function, for each fixed $r_1 \geq r_0 $ we have 
 $$(T-t)^{1+\gamma} \sup_{|y|\ge r_1} \, |\text{Rm}_{g(t)}(e^{A(T-t)^{-\gamma}} y)| < \fr{C(T-t)^{\gamma}}{\hat w_0(A+ (T-t)^{\gamma}\ln |r_1|)}
 $$ and, by the  Taylor  expansion  of $\hat w_0(\cdot)$ at $A$,  taking  the limit $t\to T-$,  yields   
\be\label{eq-113}\limsup_{t\to T-}\Bigl[ (T-t)^{1+\gamma} \sup_{|y|\ge r_1} |\text{Rm}_{g(t)}(e^{A(T-t)^{-\gamma}} y)|\Bigr]\le \fr{C\gamma A}{(n-1)(n-2) \ln r_1}.\ee
Choose    $r_1  \geq  r_0$ sufficiently large  so that 
\[{\ds \fr{C\gamma A}{(n-1)(n-2) \ln r_1} <  \fr{2\gamma A}{\sqrt{n(n-1)}}}.\]  Now on the remaining region   $|x| \le r_1$, 
 due to the smooth convergence of $\bar u^{1-m}$ to $\bar U^{1-m}$ on compact sets and the fact that the steady soliton $\bar U ^{1-m} $ attains its maximum curvature at the origin we have  
 $$\lim_{t\to T-} \Bigl[ (T-t)^{1+\gamma} \sup_{|y|\le r_1} |Rm_{g(t)}(e^{A(T-t)^{-\gamma}} y)|\Bigr]= \fr{2\gamma A}{\sqrt {n(n-1)}}$$ 
 and also  the second statement of the proposition holds, concluding the proof.  
 
\end{proof} 

\smallskip 
\begin{proof}[Proof of Lemma \ref{lem-curvature}] For $r_0$ and $t_0$ to be chosen later, we pick and fix a point $(y_1, t_1)$ with $|y_1| \geq r_0$ and $t_0 < t_1 <T$ and consider the following scaling of the solution \bee
v^{1-m}(z,\sigma):= (e^{A(T-t_1)^{-\gamma}}|y_1|)^2 \fr{u^{1-m}(e^{A(T-t_1)^{-\gamma}} |y_1|\,  z,t)}{(T-t)\, \hat w_0 (A+(T-t_1)^\gamma \ln|y_1|)}
\eee
with \begin{equation}\label{eqn-sigma}  \sigma(t):= \ln \big ( \fr{T-t_1}{T-t}\big ) \,  \fr{1}{\hat w_0(A+ (T-t_1)^{\gamma} \ln |y_1|)}.
\end{equation}
Since  $u$ satisfies  \eqref{eq-flatyamabe}, it follows that $v(z,\sigma)$ evolves by 
\begin{equation}\label{eq-v}
\fr{\partial }{\partial \sigma} v = \fr{n-1}{m} \Delta_z v^m - \hat w_0(A+(T-t_1)^\gamma \ln |y_1|)\,  v.
\end{equation}

\smallskip 
\begin{claim}\label{claim-10}
There are  $r_0>1$ and $t_0>0$ so that if $|y_1|\ge r_0$ and $t_1\ge t_0$, then there exist positive constants $c$ and $C$ 
which are  independent of $(y_1,t_1)$ such that $$c  \le v^{1-m}(z,\sigma) \le C, 
 \qquad \mbox{for}\,\,\, (z,\sigma)\in \mathcal{A}\times[-1,0]$$ 
where  $\mathcal{A}$ is the annulus $\mathcal{A}=\{z\in\R^n \,\, | \,\, 1/2 \le|z|\le 3/2 \, \}$.
 \end{claim}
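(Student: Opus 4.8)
The plan is to feed the trapping estimate of Theorem \ref{thm-main1} into the definition of $v$ and then show that all the resulting ratios are comparable to $1$, uniformly in the chosen point $(y_1,t_1)$. Put $\tau_1:=-\ln(T-t_1)$, $x_1:=e^{A(T-t_1)^{-\gamma}}|y_1|$ and $\eta_1:=A+(T-t_1)^\gamma\ln|y_1|=(T-t_1)^\gamma\ln x_1$, so that $\tw_0(\eta_1)$ is exactly the normalising constant in the definition of $v$. For $x=x_1z$, \eqref{eqn-abt} in its outer form (legitimate once $\xi=\ln|x|-A(T-t)^{-\gamma}>\xi_1$, which we verify below) gives $(T-t)\,\tw^-(\eta,\tau)\le|x|^2u^{1-m}(x,t)\le(T-t)\,\tw^+(\eta,\tau)$ with $\eta=(T-t)^\gamma\ln|x|$, $\tau=-\ln(T-t)$. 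Since $v^{1-m}(z,\sigma)=|x|^2u^{1-m}(x,t)\big/\bigl(|z|^2(T-t)\,\tw_0(\eta_1)\bigr)$, this yields
\bee
\fr{\tw^-(\eta,\tau)}{|z|^2\,\tw_0(\eta_1)}\ \le\ v^{1-m}(z,\sigma)\ \le\ \fr{\tw^+(\eta,\tau)}{|z|^2\,\tw_0(\eta_1)},
\eee
and, as $|z|^2\in[\tfrac14,\tfrac94]$ on $\mathcal{A}$, it suffices to prove $\tw^\pm(\eta,\tau)\asymp\tw_0(\eta_1)$ with constants independent of $(y_1,t_1)$ and of $(z,\sigma)\in\mathcal{A}\times[-1,0]$.

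The point is that the passage from $(\eta_1,\tau_1)$ to $(\eta,\tau)$ is only a bounded distortion. From \eqref{eqn-sigma}, $\tau-\tau_1=\sigma\,\tw_0(\eta_1)\in[-(n-1)(n-2),0]$, so $\rho:=(T-t)^\gamma/(T-t_1)^\gamma=e^{-\gamma\sigma\tw_0(\eta_1)}\in[1,R]$ with $R:=e^{\gamma(n-1)(n-2)}$, and a direct computation gives $\eta=\rho\,\eta_1+(T-t)^\gamma\ln|z|$ with $(T-t)^\gamma|\ln|z||\le R(\ln2)(T-t_1)^\gamma$. Using $\eta_1-A=(T-t_1)^\gamma\ln|y_1|\ge(T-t_1)^\gamma\ln r_0$, the concavity bound $\tw_0(\eta_1)\le\tfrac{(n-1)(n-2)}{\gamma A}(\eta_1-A)$ (whence $\rho-1\le\gamma R\,\tw_0(\eta_1)\lesssim\eta_1-A$), and taking $r_0$ large depending only on $n,\gamma,A$, one gets
\bee
\tfrac12(\eta_1-A)\ \le\ \eta-A\ \le\ C_1(\eta_1-A),
\eee
together with $\eta-A\ge\tfrac12(T-t_1)^\gamma\ln r_0\ge c(r_0)\,e^{-\gamma\tau}$, where $c(r_0)\to\infty$ as $r_0\to\infty$ (using $e^{-\gamma\tau_1}\ge R^{-1}e^{-\gamma\tau}$). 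In particular $\eta\ge A+\xi_0e^{-\gamma\tau}$ and $\xi=(\eta-A)e^{\gamma\tau}>\xi_1$ once $r_0$ is large, so the point genuinely lies in the outer region; and I would fix $t_0<T$ close enough to $T$ that on $\sigma\in[-1,0]$ the time $t$ stays in a compact subinterval of $(0,T)$ (so the flow and the barriers are available) and that $\tau\ge\tau_1-(n-1)(n-2)$ is large.

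It then remains to prove $\tw^\pm(\eta,\tau)\asymp\tw_0(\eta_1)$, which I would split as $\tw^\pm(\eta,\tau)\asymp\tw_0(\eta)\asymp\tw_0(\eta_1)$. For the second comparison, the explicit formula $\tw_0(\eta)=(n-1)(n-2)\bigl(1-(A/\eta)^{1/\gamma}\bigr)$ gives $\tw_0(\eta)\asymp\min\{1,(\eta-A)/A\}$ for $\eta>A$, so the bound $\eta-A\asymp\eta_1-A$ above finishes it (the case $\eta_1-A$ large is automatic, both sides being $\asymp1$). For the first comparison recall $\tw^\pm=\tw_0+e^{-2\gamma\tau}(\tw_1+\theta^\pm\tw_2)$ (plus $\Sigma_{k,l}c_{k,l}e^{-2k\gamma\tau}v_{k,l}$ when $0<\gamma\le\tfrac12$), and estimate the correction against $\tw_0(\eta)$ in two regimes. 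When $A<\eta\le A+1$, Lemma \ref{prop-tildew1at1} gives $|\tw_1+\theta^\pm\tw_2|(\eta)\lesssim(\eta-A)^{-1}$ while $\tw_0(\eta)\gtrsim\eta-A\ge c(r_0)e^{-\gamma\tau}$, so the correction divided by $\tw_0(\eta)$ is $\lesssim c(r_0)^{-2}$, uniformly small for $r_0$ large (the $v_{k,l}$ are bounded near $\eta=A$ and contribute even less). When $\eta\ge A+1$, $\tw_0(\eta)\ge\tw_0(A+1)>0$ and $\tw_1,\tw_2,v_{k,l}$ are bounded on $[A+1,\infty)$ by Lemma \ref{prop-tildew1infty} and \eqref{eqn-vkl}, so the correction is $O(e^{-2\gamma\tau})$, small since $\tau$ is large. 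Hence $\tw^\pm(\eta,\tau)=(1+o(1))\tw_0(\eta)\asymp\tw_0(\eta_1)$, and combining everything $c\le v^{1-m}(z,\sigma)\le C$ on $\mathcal{A}\times[-1,0]$ with $c,C$ depending only on $n,\gamma,A,\theta^\pm$.

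The hard part is the uniformity in $(y_1,t_1)$: the delicate regime is $\eta_1$ near $A$, i.e. when $(y_1,t_1)$ sits near the interface between the inner and outer regions, where $\tw_0(\eta_1)\to0$ and the barrier corrections are largest relative to the leading term. Taking $r_0$ large is exactly what pushes $(\eta,\tau)$ into the interior of the outer region, $\eta-A\gg e^{-\gamma\tau}$, so that this competition is won with room to spare; checking that $r_0$ and $t_0$ can be fixed once and for all, independently of $(y_1,t_1)$, is then routine bookkeeping.
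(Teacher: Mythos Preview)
Your argument is correct and follows the same strategy as the paper: trap $v^{1-m}$ between $\tw^\pm(\eta,\tau)/\big(|z|^2\tw_0(\eta_1)\big)$ via the outer barriers, reduce to the two comparisons $\tw^\pm(\eta,\tau)\asymp\tw_0(\eta)$ and $\tw_0(\eta)\asymp\tw_0(\eta_1)$, and control the first by the smallness of the correction $e^{-2\gamma\tau}\hat h$ once $\eta-A\gg e^{-\gamma\tau}$ and $\tau$ is large. The only cosmetic difference is in the second comparison: the paper bounds the ratio of the arguments $\eta-A$ and $(T-t_1)^\gamma\ln|y_1|$ and then invokes the concavity and monotonicity of $\tw_0(A+\cdot)$ with $\tw_0(A)=0$, whereas you use the explicit equivalence $\tw_0(\eta)\asymp\min\{1,(\eta-A)/A\}$; both give the same conclusion from $\eta-A\asymp\eta_1-A$, and your route sidesteps a small bookkeeping issue in the paper where the denominator in \eqref{eq-bound} carries $(T-t)^\gamma$ rather than $(T-t_1)^\gamma$.
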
 

Lets us assume that the claim holds and finish the proof of the lemma. Since $|\hat w_0 |\le(n-1)(n-2)$, the equation \eqref{eq-v} is uniformly parabolic
on $\mathcal{A}\times[-1,0]$,  independently from   choice  of 
the point $(y_1,t_1)$,  and therefore by standard parabolic regularity estimates, we have uniform bounds for
 $|\nabla v| $ and $|\nabla^2 v|$ on any strictly smaller parabolic cylinder. In particular, we have 
 $$|\nabla v (y_1/|y_1|, 0)|  \quad\mbox{and} \quad |\nabla^2  v (y_1/|y_1|, 0)| <C.$$
Since $v^{1-m}(\cdot,0)\delta_{ij} $ and $u^{1-m}(\cdot,t_1)\delta_{ij} $ are isometric, we conclude that 
\bee
\begin{split}
(T-t_1) \, \hat w_0(A+ (T-t_1)^{\gamma}\ln |y_1|) \,  \big  | {\text {Rm}}_{g(t_1)} &(e^{A(T-t_1)^{-\gamma}} y_1) \big  |\\
&=   |{\text {Rm}}_{v^{1-m}(0)\delta_{ij} } \big (\fr{y_1}{|y_1|} \big ) \big  | \leq C.
 \end{split}
 \eee
The constant $C$ is  independent from  $(y_1,t_1)$.  This finishes  the proof of the lemma.

\begin{proof}[Proof of the Claim \ref{claim-10}] Although the computations below might look intimidating, the idea is simple.  Since $u$ is trapped  between the two barriers
in the outer region  where  $ |x| \ge e^{\fr{A}{(T-t)^{\gamma}}}e^{\xi_1}$, namely 
\be\label{eq-111} \hat w^-((T-t)^{\gamma}\ln |x|, -\ln (T-t)) \le \fr{|x|^2u(x,t)}{T-t} \le \hat w^+( (T-t)^{\gamma}\ln |x|, -\ln (T- t)) \ee 
and  since  $\hat w^+$ and $\hat w^-$ are close to $\hat w_0$,  we  expect that different values of $v$ are similar  in the whole annulus.

Indeed, suppose $r_0 >  1$  and $t_0 \in (0,T)$ are first chosen to satisfy
\bee\label{eq-r0t0}\text{ $\ln \fr{r_0}{2} >  \xi_1 \quad \mbox{and} \quad - \ln (T- t_0) >-\ln T+(n-1)(n-2)$}.\eee
With this choice of $t_0$ and  $0\le \hat w _0\le (n-1)(n-2)$, we see for that  \[\sigma(0)=\fr{\ln (T-t_1)-\ln (T-0)}{w_0(A+ (T-t_1)^{\gamma} \ln |y_1|)}\le \fr{\ln (T-t_0)-\ln T}{(n-1)(n-2)} \le -\fr{(n-1)(n-2)}{(n-1)(n-2)}= -1\] 
where $\sigma(t)$ is defined by \eqref{eqn-sigma}. Since $u(x,t)$ is defined for $t\ge0$, this shows that the rescaled function $v(z,\sigma)$  is well defined on $\mathcal{A}\times [-1,0]$. By choosing $r_0>2e^{\xi_1}$ sufficiently large  and $t_0 \in(0,T)$ closer to $T$, we may  assume that 
\be\label{eq-sec7-1}
\fr{1}{2}  \tw_0( A+ \xi e^{-\gamma \tau}) \le \tw^-(A+ \xi e^{-\gamma \tau},\tau)
\ee
and \be\label{eq-sec7-2} \tw^+(A+ \xi e^{-\gamma \tau}, \tau) \le 2 \tw_0( A+ \xi e^{-\gamma \tau})\ee
on $\xi \ge \ln  (r_0/2) $ and $\sigma\ge-1$. This is possible because $\hat w^\pm = \hat w_0 + e^{-2\gamma t} \hat h$,  where $\hat h$ is bounded away from $A$ and satisfies \ref{prop-tildew1at1} near $A$,  when $\gamma >1/2$. The other  range $\gamma\in(0,1/2]$ is   similar (see Claim \ref{claim-outer4}).
Using  \eqref{eq-111} we can then estimate
\begin{align*}
v^{1-m}(z,\sigma) &= \fr{(e^{\fr{A}{(T-t_1)^{\gamma}}}|y_1|)^2}{T-t} \fr{u^{1-m}(e^{\fr{A}{(T-t)^{\gamma}}}|y_1| \, e^{\fr{A}{(T-t_1)^{\gamma}}-\fr{A}{(T-t)^{\gamma}} } z,t)}{\hat w_0 (A+ (T-t_1)^\gamma \, \ln |y_1|)}\\
&\le\fr{1}{|z|^2}\fr{ \tw^+ \Bigl (A+  (T-t)^\gamma \ln \big ( |y_1||z|e^{\fr{A}{(T-t_1)^{\gamma}}-\fr{A}{(T-t)^{\gamma}} } \big ),\tau
\Bigr )}{\hat w_0 (A+ (T-t_1)^\gamma \, \ln |y_1|)}\\
&\le \fr{2}{|z|^2}\, \fr{\tw_0 \Bigl (A+  (T-t)^{\gamma} \, \big (  \ln |y_1| + \ln |z|\big ) +A \, \big  ( \fr{(T-t)^\gamma}{(T-t_1)^{\gamma}} -1 \big  ) \Bigr)}{\tw_0 (A+ (T-t_1)^\gamma \ln |y_1|)}.
\end{align*}
Since $t \leq t_1$ and $|y_1| \geq  r_0$, we have ${\ds  |y_1||z|e^{\fr{A}{(T-t_1)^{\gamma}}-\fr{A}{(T-t)^{\gamma}} } 
\ge{ r_0}\cdot  (1/2)\cdot  1\ge e^{\xi_1}}$, thus  we could  bound  above $v^{1-m}(z,\sigma)$ using our  barrier $\hat w^+$ of the outer region in the second line and use \eqref{eq-sec7-2} in the last line.
Similarly we get a lower bound 
\bee
v^{1-m}(z,\sigma) \geq \fr{1}{2|z|^2}\fr{\tw_0 \Bigl (A+  (T-t)^{\gamma} \, \big (  \ln |y_1| + \ln |z|\big ) +A \, \big  ( \fr{(T-t)^\gamma}{(T-t_1)^{\gamma}} -1 \big  ) \Bigr)}{\tw_0 (A+ (T-t_1)^\gamma \ln |y_1|)}.
\eee
If we could find constants   $0<c<1<C$ such that 
\begin{equation}\label{eq-bound} c\le\fr{(T-t)^{\gamma} \, \big (  \ln |y_1| + \ln |z|\big ) +A \, \big  ( \fr{(T-t)^\gamma}{(T-t_1)^{\gamma}} -1 \big  )}{(T-t)^{\gamma}\, \ln |y_1|}\le C
\end{equation} 
holds on $(v,\sigma)\in A\times [-1,0]$, then since  $\tw_0(A+x)$ is an  increasing function on $x\ge0$, concave and $\tw_0(A+0)=0$, we would deduce
that $$ c \le \fr{\tw_0 \Bigl (A+  (T-t)^{\gamma} \, \big (  \ln |y_1| + \ln |z|\big ) +A \, \big  ( \fr{(T-t)^\gamma}{(T-t_1)^{\gamma}} -1 \big  ) \Bigr)}{\tw_0 (A+(T-t)^{\gamma} \, \ln |y_1|)} \le C$$
which combined with the above would finish  the proof of our claim. 

Now let us now find the bounds \eqref{eq-bound}.  First, it is clear that 
\be\label{eqn-ine1}
1+\fr{ \ln \fr{1}{2}}{\ln r_0} \le \fr{\ln |y_1|+\ln |z|}{\ln |y_1|} \le 1 + \fr{\ln \fr{3}{2}}{\ln r_0}
\ee and notice that  ${\ds 1+ \fr{ \ln \fr{1}{2}}{\ln r_0} = \fr{\ln \fr{r_0}{2}}{\ln r_0} }$ is positive since ${\ds \ln \fr{r_0}{2} > \xi_1 >0}$. Thus, it suffices to prove
\begin{equation}\label{eqn-ine2} 0 < \fr{A \, \big  ( \fr{(T-t)^\gamma}{(T-t_1)^{\gamma}} -1 \big  )}{(T-t)^{\gamma}\, \ln |y_1|}<C_1
\end{equation} 
by some constant $C_1 < \infty$. If we introduce ${\theta:= (T-t_1)^\gamma} \, \ln |y_1|>0$, then by the definition of $\sigma$ in \eqref{eqn-sigma}
$$ \fr{(T-t)^\gamma}{(T-t_1)^{\gamma}}  = e^{\ds -\sigma \gamma \, \tw_0 (A+ \ln |y_1|(T-t_1)^\gamma )}=e^{\ds -\sigma \gamma \, \tw_0 (A+ \theta)}
>0.$$ Now $0\le -\sigma\le 1$ and ${\ds  e^{-\sigma \gamma \tw_0(A+\theta)}\ge1}$ imply  $$ 0< \fr{A \Bigl ( \fr{(T-t)^\gamma}{(T-t_1)^{\gamma}} -1 \Bigr )}{(T-t)^{\gamma}\, \ln |y_1|  }= A\, \fr{e^{-\sigma\gamma \tw_0 \, (A+\theta)}-1}{\theta \, e^{-\sigma \gamma \tw_0\, (A+\theta)}}\le A\, \fr{e^{\gamma \tw_0\, (A+\theta)} - e^{\gamma \tw_0\, (A+0)}}{\theta-0}. $$
By the  mean value theorem, there exists $0 < \theta_0 < \theta $ such that  
\bee\begin{split}
\fr{e^{\gamma \, \tw_0(A+\theta)} - e^{\gamma \, \tw_0(A+0)}}{\theta-0} &= \gamma \, \tw_0'(A+\theta_0) \, e^{\gamma \tw_0(A+\theta_0)}\le \gamma \, \fr{(n-1)(n-2)}{\gamma A} \, e^{\gamma (n-1)(n-2)}.
\end{split}
\eee
Here, we used the facts  that $\tw_0(A+x)$ is a  nonnegative concave function on the set  $x\ge0$ with \[\text{${\ds \tw'_0(A)=\fr{(n-1)(n-2)}{\gamma A}} \quad $ and $\quad {\ds\tw_0(A+x)\le{(n-1)(n-2)}}$}.\] Combining the last two inequalities  implies that \eqref{eqn-ine2} holds with \[C_1:= {(n-1)(n-2)} \, e^{\gamma (n-1)(n-2)}.\] 
This finished the proof of the claim and also the proof of the lemma. 
\end{proof}

\end{proof}

\section{Proof of Theorem \ref{thm-main}}\label{sec-new}

In this final  section, we will give the proof of Theorem \ref{thm-main} as stated in the introduction.  We will need the following rigidity result for eternal solutions of conformally flat Yamabe flow.
\begin{prop}\label{prop-rigidity}Let $g(t)=u^{1-m}(x,t)\, \delta_{ij} $   be a smooth eternal solution to the conformally flat Yamabe flow \eqref{eq-flatyamabe} on $\R^n\times (-\infty,\infty)$, with positive Ricci and uniformly bounded sectional curvature. We further assume that $u$ is bounded from below by a radial steady gradient soliton centered at the origin with maximum scalar curvature $2\gamma A>0$, that is   \be  u(x,t)\ge \label{eq-eternalbounds}e^{-\fr{2\gamma A(t+\xi_1)}{1-m}}\bar U(|x|e^{-\gamma A(t+\xi_1)})\quad \text{ for some }\xi_1\in\R.\ee
Then, $u(x,t)$ must be a radial  gradient steady soliton, that is  \be u(x,t)\equiv e^{-\fr{2\gamma A(t+\xi_0)}{1-m}}\bar U(|x|e^{-\gamma A(t+\xi_0)})\quad\text{  for some }\xi_0 \le \xi_1.\ee

\begin{proof}
By the Harnack inequality for the Yamabe flow (c.f.  Theorem 3.7 in \cite{Ch}),   for any 1-form $X_i$ \be \label{eq-harnackexpression}(n-1)\Delta R +\langle  \nabla R,X\rangle + \fr{1}{n-1}R_{ij}X^iX^j +R^2\ge 0.\ee 
Note that since our solution exists from $t=-\infty$ we  could drop $R/t$ term from the original Harnack expression  in \cite{Ch}.
 This inequality, in particular implies that $$(n-1)\Delta R +R^2 =\partial_t R>0.$$
\begin{claim}\label{claim-cc}$R_g(0,0) = 2\gamma A$ and $2\gamma A = \sup R_g(x,t)$. 
\end{claim}
\begin{proof}[Proof of Claim \ref{claim-cc}]The proof is simple  and  uses that $\partial_t R>0$  and  \be \partial_t u^{1-m}(x,t)= -R_g(x,t) \, u^{1-m}(x,t).\ee  Suppose there is a point $(x_0,t_0)$ with $R_g(x_0,t_0)>2\gamma A$. Since $\partial_t R>0$, a ODE comparison implies$$u(x_0,t)= C(x_0) \,  e^{-\fr{1}{1-m}R_g(x_0,t_0)t}, \quad \text{ as }t\to \infty.$$ On the other hand, this contradicts to $$u(x_0,t)\ge \bar e^{-\fr{2\gamma A(t+\xi_1)}{1-m}}\bar u_0(|x_0|e^{-\gamma A(t+\xi_1)})\ge e^{-\fr{2\gamma A(t+\xi_1)}{1-m}}\inf_{{|y|\le|x_0|e^{-\gamma A(t_0+\xi_1)}}}\bar u_0(y)$$ which holds for $t\ge t_0$. Similarly as before, if  $R_g(0,0)<2\gamma A$, then $$u(0,t)= C_0 \, e^{-\fr{1}{1-m}R_g(0,0)t}, \quad \text{ as }t\to -\infty $$ which  again contradicts to \eqref{eq-eternalbounds}.
\end{proof}

According to the classification of conformally flat radial solitons (c.f.  Propositions 1.4 and 1.5 in  \cite{DS}) the  one parameter family of solutions 
\[\bar U_{\xi}(x,t):= e^{-\fr{2\gamma A(t+\xi)}{1-m}}\bar U(|x|e^{-\gamma A(t+\xi)}), \qquad \xi\in \R \]  are all possible conformally flat radial steady gradient solitons  whose maximum scalar curvature is $2\gamma A$ at the origin. It also is known that these solutions attain a strict  curvature maximum at the origin.  Meanwhile, due to the Claim \ref{claim-cc}, $g(x,t)$ attains its maximum scalar curvature  at an interior space-time point $(0,0)$. Furthermore, since $g(x,t)$  has positive  Ricci curvature and  bounded sectional curvature, Corollary 5.1 in \cite{DS} implies that $g(x,t)$ must be a steady gradient soliton. Also, under the nonnegative Ricci condition, such steady gradient solitons are (globally) conformally flat and radially symmetric (c.f.  Theorem 3.2 and Corollary 3.3 in \cite{CMM} or Corollary 1.6 and Remark 1.2 in \cite{CSZ}).  Since $g(x,t)$ has its maximum scalar curvature at the origin, it must be symmetric with respect to this point. In view of Liouville's rigidity theorem on conformal mappings on $\R^n$ with $n\ge3$, $u(x,t)$ must be a radially symmetric function which represents a steady gradient soliton. Hence $u(x,t)=\bar U_{\xi_0}(x,t)$ by the classification theorem in \cite{DS}. 
\end{proof}
\end{prop}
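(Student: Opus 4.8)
The plan is to combine Chow's differential Harnack inequality for the conformally flat Yamabe flow (as recalled in \eqref{eq-harnackexpression}, cf. \cite{Ch}) with the classification of conformally flat Yamabe solitons from \cite{DS,CMM,CSZ}. Since the solution is eternal, the trace form of the Harnack inequality (take the one-form $X=0$) carries no $R/t$ term and gives $\partial_t R = (n-1)\Delta R + R^2 \ge 0$; in particular, for each fixed $x$ the function $t\mapsto R_g(x,t)$ is non-decreasing. I would use this together with the elementary pointwise identity $\partial_t u^{1-m} = -R_g\,u^{1-m}$, equivalently $u^{1-m}(x,t) = u^{1-m}(x,t_0)\exp\!\big(-\!\int_{t_0}^{t} R_g(x,s)\,ds\big)$.

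First I would establish the analogue of Claim \ref{claim-cc}: that $\sup_{\R^n\times\R} R_g = 2\gamma A$, attained at the origin at time $0$. If some point $(x_0,t_0)$ had $R_g(x_0,t_0)>2\gamma A$, then the time-monotonicity of $R_g$ forces $u(x_0,t)\le C\,e^{-R_g(x_0,t_0)\,t/(1-m)}$ as $t\to+\infty$; but along this vertical line the lower bound \eqref{eq-eternalbounds} behaves like $c\,e^{-2\gamma A\,t/(1-m)}$ (because $|x_0|e^{-\gamma A(t+\xi_1)}\to0$ and $\bar U(0)>0$), and since $R_g(x_0,t_0)>2\gamma A$ this is a contradiction. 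Dually, if $R_g(0,0)<2\gamma A$, then monotonicity backward in time gives $u(0,t)\le C\,e^{-R_g(0,0)\,t/(1-m)}$ as $t\to-\infty$, which again contradicts \eqref{eq-eternalbounds} at the origin. Hence $R_g\le 2\gamma A$ everywhere and $R_g(0,0)=2\gamma A$.

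Next, $R_g$ attains its spacetime supremum at the interior point $(0,0)$. Feeding this into the equality case of the Harnack estimate (Corollary 5.1 in \cite{DS}, which uses exactly the positivity of the Ricci curvature and the boundedness of the sectional curvature) I conclude that $g(t)$ is a steady gradient Yamabe soliton. By the classification of conformally flat Yamabe solitons with nonnegative Ricci curvature (Theorem 3.2 and Corollary 3.3 of \cite{CMM}, or Corollary 1.6 and Remark 1.2 of \cite{CSZ}) such a soliton is globally conformally flat and rotationally symmetric, and since its unique scalar-curvature maximum sits at the origin the center of symmetry must be the origin. Liouville's rigidity theorem for conformal maps of $\R^n$, $n\ge3$, then forces $u(\cdot,t)$ to be radial, and the explicit description of radial steady solitons in \cite{DS} identifies it with a member $\bar U_{\xi_0}(x,t) = e^{-2\gamma A(t+\xi_0)/(1-m)}\bar U(|x|e^{-\gamma A(t+\xi_0)})$ of the one-parameter family. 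Comparing with \eqref{eq-eternalbounds} and using that $\bar U_\xi$ is pointwise strictly decreasing in $\xi$ (the scalar curvature is positive, so the soliton metric shrinks forward in time) gives $\xi_0\le\xi_1$.

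The main obstacle is the rigidity input in the third step: deducing that the flow is a steady soliton from the degeneracy of the Harnack quantity at an interior spacetime maximum of $R$. This is the Yamabe-flow counterpart of Hamilton's rigidity theorem for eternal Ricci flows attaining $\sup R$, and it genuinely uses the curvature hypotheses; I would invoke it as a black box from \cite{DS}. By contrast, the ODE comparison in the second step is routine once Harnack monotonicity is in hand, and the concluding reduction to radial symmetry and to an explicit soliton via Liouville and the classification in \cite{DS} is standard.
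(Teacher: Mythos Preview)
Your proposal is correct and follows essentially the same route as the paper: Harnack monotonicity of $R$ on an eternal solution, the ODE comparison via $\partial_t u^{1-m}=-R\,u^{1-m}$ against the soliton lower bound to pin down $\sup R=2\gamma A=R(0,0)$, then the rigidity result Corollary~5.1 of \cite{DS} together with the classification in \cite{CMM,CSZ,DS} and Liouville. You in fact add the final inequality $\xi_0\le\xi_1$ (via monotonicity of $\bar U_\xi$ in $\xi$), which the paper states but does not explicitly justify.
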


\medskip
We are now in position to finally give the proof of our main result, Theorem \ref{thm-main}. 

\begin{proof}[Proof of Theorem \ref{thm-main}]
We begin by fixing  $\xi_1$, $\e\le \e_1$ and $\tau_1$,  as they appear in  Proposition \ref{prop-main1} and Theorem \ref{thm-main1}. Set $T_1:=e^{-\tau_1}$
and fix $T$ with $0 < T < T_1$.   

\begin{claim}\label{claim-cc2}
There exist $\xi_a>\xi_b$ so that   
\be \label{eq-boundch8}w^+_\e(\xi-\gamma A \xi_a,-\ln T)\le \frac{|x|^2 u_0^{1-m}(x) }{T^{1+\gamma}} \le w^-_\e(\xi-\gamma A\xi_b,-\ln T)\ee
holds, under the  coordinate change $\xi=\ln |x|-AT^{-\gamma}.$
\end{claim}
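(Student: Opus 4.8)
The plan is to take the cylindrical translations $\xi_a$ large and positive and $\xi_b$ large and negative, and to verify the two inequalities in \eqref{eq-boundch8} in two regimes: a neighbourhood of spatial infinity, where condition (ii) of Theorem \ref{thm-main} pins down the second--order asymptotics of $u_0$, and the complementary region --- a large but bounded annulus together with a punctured neighbourhood of the origin --- where only the smoothness and positivity of $u_0$ are used. Throughout I work at $\tau=-\ln T$ with $\xi=\ln|x|-AT^{-\gamma}$, so that $\eta=A+\xi e^{-\gamma\tau}=T^{\gamma}\ln|x|$ and $\frac{|x|^{2}u_0^{1-m}(x)}{T^{1+\gamma}}$ equals $\bar w(\xi,-\ln T)$, the inner rescaling of $u_0$. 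Since equation \eqref{eq-w} is autonomous in $s=\ln r$, each translated function $w^{\pm}_\e(\,\cdot-\gamma A\xi_*,\,\cdot\,)$ is again a super/sub--solution with all the properties of Proposition \ref{prop-main1}; viewed on $\R^{n}$ through $\xi=\ln|x|-AT^{-\gamma}$ it is a continuous positive function vanishing like $|x|^{2}$ at the origin and growing logarithmically at infinity, exactly as $\frac{|x|^{2}u_0^{1-m}(x)}{T^{1+\gamma}}$ does.

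\emph{Step 1 (comparison at infinity).} First I would expand $w^{+}_\e(\xi-\gamma A\xi_a,-\ln T)$ for $|x|$ so large that $\xi-\gamma A\xi_a>\xi_1$, where the barrier is in its outer form. The translation becomes $\eta\mapsto\eta-\gamma A\xi_aT^{\gamma}$, so the leading term $e^{\gamma\tau}\hat w_0(\eta-\gamma A\xi_aT^{\gamma})$ equals $\frac{(n-1)(n-2)}{T^{1+\gamma}}\bigl(T-A^{1/\gamma}(\ln|x|-\gamma A\xi_a)^{-1/\gamma}\bigr)$, and expanding $(\ln|x|-\gamma A\xi_a)^{-1/\gamma}$ in $1/\ln|x|$ rewrites it as $\frac{(n-1)(n-2)}{T^{1+\gamma}}\bigl(T-A^{1/\gamma}(\ln|x|)^{-1/\gamma}-A^{1/\gamma+1}\xi_a(\ln|x|)^{-1/\gamma-1}+O((\ln|x|)^{-1/\gamma-2})\bigr)$. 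By Lemmas \ref{prop-tildew1infty} and \ref{remark-preciseasymp}, after the substitution $\eta=T^{\gamma}\ln|x|$ the correction terms $e^{-2\gamma\tau}(\hat w_1+\theta^{+}\hat w_2)$ and, for $0<\gamma\le1/2$, the $v_{k,l}$ terms contribute only $O\bigl((\ln|x|)^{-1/\gamma-2}\ln\ln|x|\bigr)$, uniformly once $\xi_a$ is fixed. Comparing with condition (ii), $\frac{|x|^{2}u_0^{1-m}(x)}{T^{1+\gamma}}=\frac{(n-1)(n-2)}{T^{1+\gamma}}\bigl(T-A^{1/\gamma}(\ln|x|)^{-1/\gamma}+O((\ln|x|)^{-1/\gamma-1})\bigr)$, the two agree to order $(\ln|x|)^{-1/\gamma}$ and their difference is $(\ln|x|)^{-1/\gamma-1}$ times $-\frac{(n-1)(n-2)A^{1/\gamma+1}}{T^{1+\gamma}}\xi_a$, plus a bounded term controlled by $\frac{(n-1)(n-2)}{T^{1+\gamma}}$ times the implied constant of (ii), plus a quantity tending to $0$ as $|x|\to\infty$. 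Hence there are $\xi_a^{*}>0$ and a radius $R_a$ such that $w^{+}_\e(\xi-\gamma A\xi_a,-\ln T)\le\frac{|x|^{2}u_0^{1-m}(x)}{T^{1+\gamma}}$ for $|x|\ge R_a$ whenever $\xi_a\ge\xi_a^{*}$; enlarging $\xi_a^{*}$, and noting that condition (ii) already forces its threshold $R_0$ (from which (ii) holds) to be of size $\gtrsim e^{AT^{-\gamma}}$ in order to be compatible with $u_0>0$, I may take $R_a$ to be the glue radius $R_a^{*}:=e^{AT^{-\gamma}+\gamma A\xi_a+\xi_1}\ge R_0$ of the translated barrier. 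The symmetric argument with $\xi_b\le-\xi_b^{*}<0$, where the sign of the $\xi_b$--term reverses, gives $\frac{|x|^{2}u_0^{1-m}(x)}{T^{1+\gamma}}\le w^{-}_\e(\xi-\gamma A\xi_b,-\ln T)$ for $|x|\ge R_b$.

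\emph{Step 2 (bounded region).} On $\{|x|\le R_a^{*}\}$ the translated lower barrier is in its inner form $(1+\e)^{-1}\bar w_0\bigl(\ln|x|-AT^{-\gamma}-\gamma A\xi_a+C_1(-\ln T)\bigr)$, an increasing function of $|x|$ equal to $(1+\e)^{-1}\bar w_0(\xi_1+C_1(-\ln T))$ at $|x|=R_a^{*}$ and exponentially small in $\xi_a$ away from that radius. On $\{1\le|x|\le R_a^{*}\}$ I would use that $\frac{|x|^{2}u_0^{1-m}(x)}{T^{1+\gamma}}\ge m_1:=\inf_{|x|\ge1}\frac{|x|^{2}u_0^{1-m}(x)}{T^{1+\gamma}}>0$ (finite positive, since this quantity is continuous, positive, and tends to $(n-1)(n-2)T^{-\gamma}$ at infinity): for $\xi_a$ large the barrier is $\le m_1$ except near $|x|=R_a^{*}$, and there condition (ii) gives $\frac{|x|^{2}u_0^{1-m}(x)}{T^{1+\gamma}}\sim(n-1)(n-2)\xi_a$, dominating the fixed value $(1+\e)^{-1}\bar w_0(\xi_1+C_1(-\ln T))$. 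On $\{|x|\le1\}$ both sides vanish like $|x|^{2}$, the left with coefficient $(1+\e)^{-1}\bar U^{1-m}(0)\,e^{2(-AT^{-\gamma}-\gamma A\xi_a+C_1(-\ln T))}$ (from $\bar w_0(\sigma)=e^{2\sigma}\bar U^{1-m}(e^{\sigma})$ and $\bar U^{1-m}$ bounded on $[0,\infty)$ with $\bar U^{1-m}(0)>0$) and the right with the fixed positive coefficient $u_0^{1-m}(0)T^{-(1+\gamma)}$; the former is smaller for $\xi_a$ large. Combining with Step 1, $w^{+}_\e(\xi-\gamma A\xi_a,-\ln T)\le\frac{|x|^{2}u_0^{1-m}(x)}{T^{1+\gamma}}$ on all of $\R^{n}$. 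The upper inequality is symmetric: for $\xi_b$ very negative the translated barrier $w^{-}_\e(\xi-\gamma A\xi_b,-\ln T)=(1-\e)^{-1}\bar w_0\bigl(\ln|x|-AT^{-\gamma}-\gamma A\xi_b+C_2(-\ln T)\bigr)$ is uniformly large on $\{1\le|x|\le R_b^{*}\}$ and near the origin has coefficient tending to $+\infty$. Finally, taking $\xi_a$ to be the largest and $\xi_b$ the most negative of the finitely many thresholds found above, we obtain $\xi_a>0>\xi_b$, hence $\xi_a>\xi_b$, and \eqref{eq-boundch8} holds.

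\emph{Main obstacle.} The crux is Step 1: after a \emph{fixed} (large) cylindrical translation the barrier $w^{\pm}_\e(\,\cdot,-\ln T)$ must still reproduce the prescribed asymptotics of $u_0$ up to order $(\ln|x|)^{-1/\gamma}$, with the translation contributing the dominant discrepancy exactly at the next order $(\ln|x|)^{-1/\gamma-1}$. Establishing this means tracking every correction term of the barrier at $\tau=-\ln T$ --- the $\hat w_1,\hat w_2$ pieces and, for $0<\gamma\le1/2$, the $v_{k,l}$ pieces --- and checking, via Lemmas \ref{prop-tildew1infty} and \ref{remark-preciseasymp} after $\eta=T^{\gamma}\ln|x|$, that they are of strictly lower order than both the error in condition (ii) and the translation term, uniformly in $x$ rather than merely for the untranslated barrier. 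A secondary point is the interface $|x|\approx R_a^{*}$ (and $R_b^{*}$) where the ``condition (ii)'' comparison meets the ``positivity'' comparison; this is harmless precisely because the threshold $R_0$ of (ii) is of size $\gtrsim e^{AT^{-\gamma}}$, comparable to the glue radii, and is fixed independently of $\xi_a$.
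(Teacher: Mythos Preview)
Your approach is essentially the same as the paper's: expand both the translated barriers and $|x|^2u_0^{1-m}(x)$ to order $(\ln|x|)^{-1/\gamma-1}$ (equivalently $\xi^{-1/\gamma-1}$) at spatial infinity, pick $\xi_a\gg0$ and $\xi_b\ll0$ so that the translation term dominates the $O(\cdot)$ error of condition (ii), then match the $e^{2\xi}$ behaviour near the origin and fill in the remaining compact region using positivity of $u_0$. The paper's proof is considerably terser (it just records the two endpoint asymptotics and points to Claim 4.4 of \cite{CD}), but the strategy is identical.

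There is one genuine slip in your Step 2. Near $|x|=R_a^{*}$ you assert that condition (ii) gives $\dfrac{|x|^{2}u_0^{1-m}(x)}{T^{1+\gamma}}\sim(n-1)(n-2)\xi_a$; this is not correct. As $\xi_a\to\infty$ one has $R_a^{*}\to\infty$, and condition (ii) only yields $\dfrac{|x|^{2}u_0^{1-m}(x)}{T^{1+\gamma}}\to(n-1)(n-2)T^{-\gamma}$, a \emph{fixed} value independent of $\xi_a$. So you cannot beat the barrier's glue-point value $M:=(1+\e)^{-1}\bar w_0(\xi_1+C_1(-\ln T))$ by taking $\xi_a$ large. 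The repair is to observe instead that $M$ itself is strictly below $(n-1)(n-2)T^{-\gamma}$: by the gluing relation \eqref{eq-gluerelation}, $M=T^{-\gamma}\hat w^{+}(A+\xi_1T^{\gamma},-\ln T)$, and for $T<T_1$ small the quantity $\hat w^{+}(A+\xi_1T^{\gamma},-\ln T)$ is $O(T^{\gamma})\ll(n-1)(n-2)$ (both $\hat w_0$ and the correction $e^{-2\gamma\tau}\hat h$ are $O(T^{\gamma})$ there by Taylor expansion and Lemma \ref{prop-tildew1at1}). Alternatively, and closer to the paper's implicit logic, one can note that $w^{+}_\e(\cdot,-\ln T)$ is increasing in $\xi$ for $\tau\ge\tau_1$ (this uses the choice of $\xi_1$ in Proposition \ref{prop-main1}); hence increasing $\xi_a$ only lowers the translated barrier pointwise, so once the inequality holds for $|\xi|\ge K$ at some $\xi_{a,0}$ it persists for all $\xi_a\ge\xi_{a,0}$, and one may then enlarge $\xi_a$ freely to handle the fixed compact interval.
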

\begin{proof}[Proof of Claim \ref{claim-cc2}] For our fixed $T$, it is not hard to check  that $$T^{1+\gamma}\, w^\pm_\e (\xi, -\ln T) = (n-1)(n-2)\left( T- (\xi/A)^{-\fr{1}{\gamma}}+O(\xi^{-\fr{1}{\gamma}-1})\right), \quad \mbox{as } \,\, \xi\to \infty$$  and $$T^{1+\gamma}w^\pm_\e (\xi, -\ln T) \sim e^{2\xi}, \quad \mbox{as}\,\, \xi\to -\infty.$$

We can also check that if $\ln |x| = \xi + AT^{-\gamma}$,  $|x|^2 u^{1-m}_0(x)$ also satisfies  these asymptotics. Indeed, Condition ii)  in Theorem 1.2 implies that
\[ \begin{aligned}|x|^2u_0^{1-m}(x) &= {(n-1)(n-2)}\,  \left ( T- \left(\fr{\xi +AT^{-\gamma}}{A}\right)^{-\fr{1}{\gamma}} + O\big ((\xi +AT^{-\gamma})^{-\fr{1}{\gamma}-1}\big)\right)\\
&= (n-1)(n-2)\left( T- (\xi/A)^{-\fr{1}{\gamma}}+O(\xi^{-\fr{1}{\gamma}-1})\right)\quad \mbox{as}\,\,\xi\to \infty\end{aligned} \]
and also 
 \[ \lim_{\xi\to -\infty} \fr{|x|^2u^{1-m}_0(x)}{e^{2\xi}} =\lim_{\xi\to-\infty} e ^{AT^{-\gamma}}u^{1-m}_0(x) =  e ^{AT^{-\gamma}}\, u^{1-m}_0(0)>0. \]
Using these  asymptotic behaviors, we can first find $\xi_{a,0}>\xi_{b,0}$ such that \eqref{eq-boundch8} holds asymptotically (outside of compact interval in $\xi$). Next, we may use condition i) of Theorem 1.2 to find possibly smaller $\xi_b\le\xi_{b,0}$ so that the second inequality of \eqref{eq-boundch8} holds everywhere. Finally, by a similar argument which uses the fact $|x|^2u_0(x)$ is uniformly bounded away from zero on $|x|\ge r_0$ for all $r_0>0$, we may find larger $\xi_a\ge \xi_{a,0}$ so that the first inequality of \eqref{eq-boundch8} holds everywhere. This argument is very similar to the proof of Claim 4.4 in \cite{CD}.
\end{proof}
Let $\bar u(y,l)$ be the  rescaled  solution obtained from $u(x,t)$ under  \eqref{eqn-u10}-\eqref{eqn-u11}.  By Theorem \ref{thm-main1},  \ref{thm-main2} and the claim, 
we have local uniform upper and lower bounds on  $\bar u$, namely $\bar u_a \le \bar u \le \bar u_b$ where  
 $$\bar u_a(y,l) \to e^{-\fr{2\gamma A \xi_a}{1-m}}\bar U (ye^{-\gamma A\xi_a})=\bar U_{\xi_a} (y,0)$$
 and
 $$\bar u_b(y,l) \to e^{-\fr{2\gamma A \xi_b}{1-m}}\bar U (ye^{-\gamma A\xi_b})=\bar U_{\xi_b} (y,0)$$ 
 locally uniformly in $y$ as $l\to \infty$.

In order to show the  blow up rate \eqref{eq-blowuprate}, we first need the following claim which asserts that Lemma \ref{lem-curvature} holds for our given  solution. 
%

\begin{claim}\label{claim-sec8-2} For our metric  $g(x,t)=u^{1-m}(x,t) \, \delta_{ij}  $, there exist  $C>0$,  $r_0 \gg 1$ and $0<t_0<T$ such that  the following holds 
\begin{equation}
(T-t) \, \hat w_0 \big (A+ (T-t)^{\gamma}\ln |y| \big )  \, |\mbox{\em  Rm}_{g(t)}(e^{A(T-t)^{-\gamma}} y)| \leq C\end{equation}
on   $|y| \geq r_0$ and $ t_0<t<T$.
\end{claim}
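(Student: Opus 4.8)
The plan is to reduce Claim \ref{claim-sec8-2} to the proof of Lemma \ref{lem-curvature}; the only new ingredient is that, by Claim \ref{claim-cc2}, the solution is trapped not between $w^\pm_\e$ themselves but between fixed \emph{translates} of them. First I would note that the map $\xi\mapsto\xi-c$ is a translation of the cylindrical variable $s=\ln|x|$ (since $\xi=s-Ae^{\gamma\tau}$ and the shift $Ae^{\gamma\tau}$ does not depend on $s$), and that equation \eqref{eq-w} is invariant under $s$-translations; hence for any constant $c$ the function $w^+_\e(\xi-c,\tau)$ is still a supersolution and $w^-_\e(\xi-c,\tau)$ still a subsolution of \eqref{eq-w} on $\tau\ge\tau_1$, smooth away from the shifted corner $\xi=\xi_1+c$, with the corner inequalities of Proposition \ref{prop-main1}(iv) preserved. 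Combining \eqref{eq-boundch8}, Proposition \ref{prop-main1}(i) and the comparison principle exactly as in Theorem \ref{thm-main1} (this is the argument already carried out in the paragraph following \eqref{eq-boundch8}), I obtain that the solution exists on $(0,T)$ and is trapped,
$$w^-_\e(\xi-\gamma A\xi_a,\tau)\le \frac{|x|^2u^{1-m}(x,t)}{(T-t)^{1+\gamma}}\le w^+_\e(\xi-\gamma A\xi_b,\tau),\qquad \xi=\ln|x|-A(T-t)^{-\gamma},\ \tau=-\ln(T-t),$$
which in the outer region reads, via \eqref{eqn-w10} and $\eta=(T-t)^\gamma\ln|x|$, exactly like \eqref{eq-111} except that the argument of $\hat w^\pm$ is shifted by the fixed multiple $-\gamma A\xi_a(T-t)^\gamma$ (resp.\ $-\gamma A\xi_b(T-t)^\gamma$) of $(T-t)^\gamma$.

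Next I would rerun the proof of Lemma \ref{lem-curvature} line by line with these translated barriers. Fixing $(y_1,t_1)$ with $|y_1|\ge r_0$, $t_0<t_1<T$ and forming the rescaled solution $v$ as there, the equation \eqref{eq-v} is again uniformly parabolic on the annulus $\mathcal A\times[-1,0]$ independently of $(y_1,t_1)$, so it suffices to redo Claim \ref{claim-10}. The only change is that the argument of $\hat w^\pm$ that appears in the upper and lower estimates for $v^{1-m}(z,\sigma)$ now carries the fixed additive constant $-\gamma A\xi_b$ (resp.\ $-\gamma A\xi_a$) inside the bracket $\ln|y_1|+\ln|z|$, i.e.\ it equals $A+A\big(\tfrac{(T-t)^\gamma}{(T-t_1)^\gamma}-1\big)+(T-t)^\gamma\big(\ln|y_1|+\ln|z|-\gamma A\xi_b\big)$. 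Since $\gamma A\xi_a$, $\gamma A\xi_b$ are constants, the pinching \eqref{eq-bound} still holds once $r_0$ is chosen large in terms of $n,A,\gamma,\xi_1,\xi_a,\xi_b$: the estimate \eqref{eqn-ine1} becomes $1+\frac{\ln(1/2)-\gamma A\xi_b}{\ln r_0}\le\frac{\ln|y_1|+\ln|z|-\gamma A\xi_b}{\ln|y_1|}\le 1+\frac{\ln(3/2)-\gamma A\xi_b}{\ln r_0}$, whose left-hand side is positive as soon as $\ln r_0>\gamma A\xi_b+\ln 2$, and \eqref{eqn-ine2} is obtained verbatim from the mean value theorem using that $\hat w_0(A+\cdot)$ is nonnegative, concave, vanishes at $0$ and is bounded by $(n-1)(n-2)$. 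Using that $\hat w^\pm\asymp\hat w_0$ on this range (for $t_0$ sufficiently close to $T$, cf.\ \eqref{eq-sec7-1}--\eqref{eq-sec7-2}) and that concavity of $\hat w_0$ with $\hat w_0(A)=0$ gives $\hat w_0(A+c_1\sigma)\asymp\hat w_0(A+\sigma)$ uniformly for $c_1$ in a compact subset of $(0,\infty)$, I conclude $c\le v^{1-m}(z,\sigma)\le C$ on $\mathcal A\times[-1,0]$, and then standard parabolic regularity for \eqref{eq-v} bounds $|\nabla v|$ and $|\nabla^2 v|$, hence $(T-t_1)\,\hat w_0\big(A+(T-t_1)^\gamma\ln|y_1|\big)\,|\mbox{Rm}_{g(t_1)}(e^{A(T-t_1)^{-\gamma}}y_1)|\le C$, exactly as in Lemma \ref{lem-curvature}.

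The only point requiring care is the bookkeeping: one must carry the fixed shifts $\gamma A\xi_a,\gamma A\xi_b$ through all the inequalities of the original proof and enlarge $r_0$ (and, if needed, push $t_0$ closer to $T$) so that the positivity in the \eqref{eqn-ine1}-type bound and the comparability $\hat w^\pm\asymp\hat w_0$ survive. There is no new analytic difficulty here, since in the $\eta$-variable each shift is $O((T-t)^\gamma)\to 0$ and in the $s$-variable it is bounded; the resulting constant $C$ simply acquires a dependence on $\xi_a$ and $\xi_b$ in addition to $n,A,\gamma$.
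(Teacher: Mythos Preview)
Your proposal is correct and follows essentially the same route as the paper: both observe that Claim \ref{claim-cc2} traps the solution between \emph{translated} copies of the barriers, and then rerun the proof of Lemma \ref{lem-curvature} verbatim with the shifted arguments, enlarging $r_0$ and pushing $t_0$ toward $T$ so that the analogues of \eqref{eq-sec7-1}--\eqref{eq-sec7-2} and the positivity in \eqref{eqn-ine1} survive (the paper records the threshold as $\ln(r_0/2)\ge\xi_1+\gamma A\xi_a$, which is the more restrictive of the two shifts since $\xi_a>\xi_b$). Your remark that the shifts are $O((T-t)^\gamma)$ in the $\eta$-variable and bounded in $s$ is exactly the mechanism the paper exploits, and your use of Proposition \ref{prop-main1}(i) to pass from \eqref{eq-boundch8} to the sub/super ordering $w^-_\e(\cdot-\gamma A\xi_a)\le u\le w^+_\e(\cdot-\gamma A\xi_b)$ is the right bridge to the comparison principle.
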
\begin{proof} [Proof of Claim \ref{claim-sec8-2}] The proof is  the same as that of  Lemma \ref{lem-curvature} except a few modifications
which we point out next. Instead of  \eqref{eq-111}, now we have \bee 
\hat w^-(\Xi_a, -\ln (T-t)) \le \fr{|x|^2u(x,t)}{T-t} 
\le \hat w^+( \Xi_b, -\ln (T- t))
\eee
with $\Xi_a:= (T-t)^{\gamma}(\ln |x|-\gamma A \xi_a)$, $\Xi_b:=(T-t)^{\gamma}(\ln |x|-\gamma A\xi_b)$ and  for those points $(x,t)$ with $ |x|e^{-\gamma A \xi_a} \ge e^{\fr{A}{(T-t)^{\gamma}}}e^{\xi_1}$.

The proof of Lemma \ref{lem-curvature} now applies  after if  choose  possibly larger $r_0$ and $t_0$ now  depending on $\xi_a$ and $\xi_b$. To be specific, we can choose them $r_0$ and $t_0$ so that $\ln \fr{r_0}{2} \ge \xi_1 +\gamma A \xi_a$ and we have following inequalities instead of \eqref{eq-sec7-1} and \eqref{eq-sec7-2} 
\bee\label{eq-sec8-1}
\fr{1}{2}  \tw_0( A+ \xi e^{-\gamma \tau}) \le \tw^-(A+ (\xi-\gamma A \xi_a) e^{-\gamma \tau},\tau)
\eee
and \bee\label{eq-sec8-2} \tw^+(A+ (\xi-\gamma A \xi_b) e^{-\gamma \tau}, \tau) \le 2 \tw_0( A+ \xi e^{-\gamma \tau})\eee
on $\xi \ge \ln  (r_0/2) $ and $\sigma\ge-1$.
The rest of the proof follows as before.\end{proof}

\smallskip

We now continue with the proof of the theorem. Due to the claim, we have for $r_1\ge r_0$  (c.f. \eqref{eq-113}) \be\label{eq-114}\begin{aligned}\sup_{\R^n \setminus B_{r_1}(0)}  |\text{Rm}_{\bar g_\infty (l)}|&\le \limsup_{t\to T-}\Bigl[ (T-t)^{1+\gamma} \sup_{|y|\ge r_1} |\text{Rm}_{g(t)}(e^{A(T-t)^{-\gamma}} y)|\Bigr] \\ &\le \fr{C\gamma A}{(n-1)(n-2) \ln r_1}.\end{aligned}\ee

Let us consider a given sequence $l_i \to \infty$. Using  the  two bounds $\bar u_a $ and $\bar u_b$, we may pass to a subsequence $\bar u(y,l+l_i)$ and obtain a $C_{\operatorname{loc}}^\infty(\R^n\times\R)$ limit $\bar u_\infty$ which is an eternal solution  of the equation \eqref{eq-Einfty}. After taking  limit our two bounds imply  \be \label{eq-asympbounds}\bar U_{\xi_a} (y,0) \le \bar u_\infty(y,l) \le \bar U_{\xi_b} (y,0).\ee 
Now our limit $\bar g_\infty(y,l) = \bar u_\infty^{1-m}(y,l) \delta_{ij} $ has nonnegative Ricci since this  is preserved along the flow and the limit under the locally conformally flat condition (c.f.   \cite{Ch}).  

Our final step will be  to show that $\bar  u_\infty(y,l) $ must be one of the steady gradient solitons \[\bar U_{\xi_0}(y,0)= e^{-\fr{2\gamma A\xi_0}{1-m}}\bar U(|y|e^{-\gamma A\xi_0}).\] Note that the  time dilation parameter $\xi_0$ might be different for different limits along sequences  $l_i \to \infty$, but metrics with different  $\xi_0$ represent the same soliton and thus this proves  Cheeger-Gromov convergence of the metric $\bar u^{1-m}(y,l)\delta_{ij} $ to the same limit soliton as $l\to\infty$.  Also this convergence and  \eqref{eq-114} proves \eqref{eq-blowuprate}  (c.f.  Proposition \ref{prop-curvature}). 

Let us consider  $ u_\infty(y,l) :=  e^{-\fr{2\gamma A}{1-m} l} \bar u_\infty (ye^{-\gamma Al},l)$. Then \eqref{eq-asympbounds} turns into the inequality between eternal solutions of conformally flat Yamabe flow \eqref{eq-flatyamabe} \be\label{eq-ch8bounds}\bar U_{\xi_a} (y,l) \le u_\infty(y,l) \le \bar U_{\xi_b} (y,l).\ee $g_\infty(l) = u^{1-m}_\infty(y,l)\delta_{ij} $ is an eternal solution of the flow which has nonnegative Ricci curvature.

To apply Proposition \ref{prop-rigidity} to $u_\infty(y,l)$, we need to show it has actually strictly positive Ricci curvature and uniformly bounded $|Rm|$. 
 We first show uniform boundedness of curvature.  By \eqref{eq-114}, $\bar g_\infty(l)$ has  bounded curvature on $\R^n\setminus B_{r_1}(0)$,  for some  large $r_1$. We also have a uniform curvature bound of $\bar g_{\infty}(l)$ on $B_{r_1}(0)$ by two bounds \eqref{eq-asympbounds} and interior uniformly parabolic regularity estimate of the equation \eqref{eq-Einfty}. Since $\bar g_\infty(l)$ and $g_\infty$ are isometric, this gives uniform bound of $|\text{Rm}|$. Next, the proof for positive Ricci uses Theorem \ref{thm-classification}, the classification of locally conformally flat  nonnegative Ricci Yamabe flow having a nontrivial null eigenvector. It  solely an  interesting result, so we prove it  in a separate theorem. $(\R^n,\bar g_{\infty}(l))$ can not be flat by bounds \eqref{eq-ch8bounds}. Also an eternal solution can not be isometric to a cylinder solution which exists up to a finite time. Hence Ricci of $\bar g_\infty(l)$ is positive definite everywhere by Theorem \ref{thm-classification}.
 
Finally, by Proposition \ref{prop-rigidity}, we conclude $u_\infty(y,l) = U_{\xi_0}(y,l)$ for some $\xi_0\le \xi_a$.

\end{proof}

We will finish with proving  of the following result which was used above in the proof of Theorem \ref{thm-main}. 

\begin{theorem}\label{thm-classification}For $n\ge3$, let $(M,g(t))$ for $t\in(0,T)$ be a complete locally conformally flat solution of the Yamabe flow which has nonnegative Ricci and uniformly bounded Riemann curvature. If the  Ricci tensor has a null eigenvector at some point $(p_0,t_0)$, then $(M,g(t))$ is either locally isometric to  flat Euclidean space or  a cylinder solution $(\R\times S^{n-1}, f(t)(dr^2 \times g_{\text{can}}))$ where $g_{\text{can}}$ is the  round metric on $S^{n-1}$ and $f(t)=(n-1)(n-2)(T'-t)$ for some $T'>T$. 

\begin{proof} The uniform boundedness of the Riemann curvature tensor will only be used to apply the  (strong) maximum principle. For a  locally  conformally flat solution of the Yamabe flow, the evolution of Ricci tensor $R_{ij}$ is shown in Lemma 2.4 \cite{Ch}  as \[ \partial_t R_{ij} = (n-1)\Delta R_{ij} + \fr{1}{n-2} B_{ij} \] where $B_{ij}$ is a quadratic expression of $R_{ij}$. It was shown in (2.11) and (2.12) of \cite{Ch} that, with respect to an orthonormal basis which diagonalize Ricci tensor by $R_{ij}=\lambda_i\delta_{ij}$, we have $B_{ij}= \mu_i \delta_{ij}$ where \be \label{eq-bij}\mu_i = \sum_{k,l\neq i, \, k>l}(\lambda_k-\lambda_l)^2+ (n-2)\sum_{k\neq i} (\lambda_k-\lambda_i)\lambda_i.\ee

Let $\lambda_1\le \lambda_2\le \ldots \le \lambda_n$  be the eigenvalues of $R_{ij}$ in an increasing order. Note that
for any $1 \leq k \leq n$, we have 
$$m_k:=\lambda_1+\lambda_2+\cdots +\lambda_k = \inf \{ \text{Tr}_g (R_{ij}(V,V))\,\, |\, V\subset T_pM\text{ is a subspace of dim k}  \} $$ 
is a concave function of $R_{ij}$. 
Since the solution has  nonnegative Ricci, $m_j=0$ implies $\lambda_i=0$ for all $i\le j$. From equation  \eqref{eq-bij}, it is easy to check that the ODE $\partial_t R_{ij} = B_{ij}$ preserves $m_k\ge0$ under the nonnegative Ricci condition.  Therefore, we can apply the strong maximum principle (Lemma 8.1 in \cite{Ha2}) on $m_k\ge0$. The lemma and the continuity of $m_k$ imply that either $m_k\equiv0$ or $m_k>0$ everywhere at each time $t=t'$. Furthermore, if $m_k>0$ at $t=t'$, $m_k>0$ for all $t>t'$. As a consequence, there is a well defined decreasing function $\hat k(t)\in\{0,\ldots, n\}$ such that $m_k(p,t)=0$ if $k\le \hat k(t)$ and $m_k(p,t)>0$ if $k>\hat k(t)$.  Since $m_k=0$ iff $\text{dim} (\text{Null}(R_{ij}))\ge k$, we conclude that the rank of $R_{ij}$ is constant in space and it is equal to $n-\hat k(t)$, which is increasing with respect to time.
 
 Under the assumption that there is a point $(p_0,t_0)$ where Ricci curvature has a null eigenvector, we will show that the rank of  Ricci
 curvature  is either $0$  or  $n-1$ for all time. By the previous argument, the Ricci tensor can't have  full rank for $t\le t_0$. Also since it is increasing, there is an interval of time $(t_1,t_2)$ with $t_2\le t_0$ such that $\text{dim}(\text{Null}(R_{ij}))=k$,  for some fixed $k\in\{1,\ldots,n-1,n\} $ on this  time interval. If $k=0$, then it is clear  that the solution must  be stationary for all time and the solution must be Ricci flat. Since on a locally conformally flat manifold the Riemann curvature tensor is determined by the Ricci tensor, this implies that the solution is locally euclidean. Next, in case where  $1\le k \le n-1$  we can exactly follow the argument of Lemma 8.2 \cite{Ha2}  on the time interval $(t_1,t_2)$ to conclude that the null space of the Ricci tensor is invariant under parallel translation and also it is invariant in time.  Moreover, it lies in the null space of $B_{ij}$. 
By this last property and \eqref{eq-bij}, we see that $k$ has to be $1$ and other $\lambda_i$s except $\lambda_1$ should be the same positive number (possibly different at each point).  In this case, the manifold locally splits off along this parallel 1-dimensional null eigenvector distribution (see the lemma which follows after Theorem 8.3 \cite{Ha2})  i.e.  $(M,g(t))$ is locally splits $(\R\times N^{n-1}, dr^2 \times g^N(t))$ where $(N,g^N(t))$ is a  solution of $n-1$ dimension the Yamabe flow.

Actually, it is locally isometric to a cylinder $(\R\times S^{n-1}, dr^2 \times g_{\text{can}}(t))$ where $g_{\text{can}}(t)$ is a round metric on the sphere. Let us fix a time $t$. From the previous observation that the other $\lambda$s are the same, we know that $(N^{n-1}, g^N(t))$ is an Einstein manifold. i.e. $\text{Ric}^N(x) = \lambda(x)g^N$. If $n-1\ge3$, $\lambda\equiv\text{constant}$ could be seen by the contracted second Bianchi identity. $\nabla_jR^j_{i} = \fr{1}{2}\nabla_iR$ implies \[\nabla_i\lambda = \fr{n-1}{2}\nabla_i\lambda \quad\text{ or } \quad0=\fr{n-1}{2}\nabla_i\lambda\] depending on the direction  $i$. When $(M^n,g(t))$ is locally conformally flat and $(N^{n-1},g^N(t))$ is Einstein, we directly check from the Weyl tensor of $(M,g)$ that $(N,g^N)$ is also locally conformally flat and a space form of positive sectional curvature. When $n=3$, the Cotton tensor of $(M,g)$ vanishes. \[C_3:=C_{ijk}=\nabla_i R_{jk}-\nabla_{j}R_{ik} -\fr{1}{4}\left(\nabla_iRg_{jk}-\nabla_{j}Rg_{ik}\right)\equiv0.\] This implies \[\nabla_i\lambda g_{jk}-\nabla_j \lambda g_{ik}=0\quad\text{ and hence }\quad g^{ik}(\nabla_i\lambda g_{jk}-\nabla_j \lambda g_{ik})=2\nabla_i\lambda=0.\] 
Now again $\lambda$ is a positive constant and this proves the theorem. 
\end{proof}\end{theorem}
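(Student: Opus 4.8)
The plan is to run a Hamilton-type strong maximum principle argument on the Ricci tensor, using that under the locally conformally flat hypothesis the Yamabe flow becomes a reaction--diffusion system for $R_{ij}$ (the uniform curvature bound entering only to license the maximum principle). First I would recall from \cite{Ch} that on a locally conformally flat solution the Ricci tensor evolves by $\partial_t R_{ij} = (n-1)\Delta R_{ij} + \tfrac{1}{n-2} B_{ij}$, where in an orthonormal frame diagonalizing Ricci, $R_{ij} = \lambda_i \delta_{ij}$, the reaction term is also diagonal, $B_{ij} = \mu_i \delta_{ij}$, with $\mu_i$ the explicit quadratic in the $\lambda$'s recorded in \eqref{eq-bij}. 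Next, ordering $\lambda_1 \le \cdots \le \lambda_n$, I would work with the partial sums $m_k = \lambda_1 + \cdots + \lambda_k$, which are \emph{concave} functions of $R_{ij}$, being infima of the trace of Ricci over $k$-dimensional subspaces of $T_pM$. A direct computation from the formula for $\mu_i$ shows the ODE $\partial_t R_{ij} = B_{ij}$ preserves the cone $\{m_k \ge 0\}$ once Ricci is nonnegative, and concavity then allows an application of Hamilton's strong maximum principle (Lemma 8.1 in \cite{Ha2}) to each $m_k$: at every fixed time $m_k$ is either identically zero or strictly positive on all of $M$, and once strictly positive it stays so for all later times. Hence the rank of $R_{ij}$ is constant on $M$ at each time and nondecreasing in $t$.

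With this structure in hand, the hypothesis that Ricci has a null eigenvector at $(p_0,t_0)$ forces Ricci to be degenerate for all $t \le t_0$, so I may pass to a short time interval on which $\dim \mathrm{Null}(R_{ij}) \equiv k$ for a fixed $k \in \{1,\dots,n\}$. In the extreme case $k = n$ the flow is stationary and Ricci flat, and since the full curvature tensor of a locally conformally flat manifold is determined by its Ricci tensor, the metric is locally Euclidean. For $1 \le k \le n-1$ I would follow the local splitting argument of Hamilton's Lemma 8.2 in \cite{Ha2}: the null distribution of Ricci is invariant under parallel transport and in time and is contained in the null space of $B_{ij}$; testing this last constraint against the explicit $\mu_i$ shows it is compatible only with $k = 1$, the remaining eigenvalues being all equal to a common positive value. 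Thus $M$ locally splits isometrically as $(\R \times N^{n-1},\, dr^2 \times g^N(t))$, with $(N, g^N(t))$ an $(n-1)$-dimensional Yamabe flow whose time slices are Einstein with positive Einstein constant.

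It remains to identify $N$ with a round sphere. For $n-1 \ge 3$ the Einstein constant is spatially constant by the contracted second Bianchi identity; for $n = 3$ ($N$ a surface) the same follows from the vanishing of the Cotton tensor of the ambient $3$-manifold. Combined with local conformal flatness, a positive Einstein metric is a space form of positive curvature, hence locally the round $S^{n-1}$; and a short ODE computation identifies the round cylinder evolving by the Yamabe flow as $f(t)(dr^2 \times g_{\mathrm{can}})$ with $f$ affine in $t$, the requirement that the solution live on $(0,T)$ pinning it down to $f(t) = (n-1)(n-2)(T'-t)$ for some $T' > T$. I expect the main obstacle to be the bookkeeping in the range $1 \le k \le n-1$: verifying carefully that the null distribution of $R_{ij}$ coincides with that of $B_{ij}$, that it is parallel and stationary in time (the de Rham-type splitting adapted to the flow), and that the algebra of the $\mu_i$ then forces $k = 1$ --- this is where Hamilton's Ricci-flow argument must be transcribed faithfully to the Yamabe setting with the specific reaction term above, and it is the step most sensitive to the exact form of $B_{ij}$.
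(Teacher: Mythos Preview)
Your proposal is correct and follows essentially the same route as the paper's own proof: the same evolution equation for $R_{ij}$ from \cite{Ch}, the same concave partial sums $m_k$ and Hamilton's strong maximum principle from \cite{Ha2}, the same reduction via Lemma~8.2 of \cite{Ha2} forcing $k=1$ and a local splitting, and the same Bianchi/Cotton argument to pin down the Einstein constant on the cross-section. The only cosmetic difference is that you spell out the affine ODE for $f(t)$ at the end, which the paper leaves implicit.
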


\begin{remark} In addition to this, if the manifold is simply connected, the solution is globally $(\R^n,g_{\text{can}} )$ or $(\R\times S^{n-1}, f(t)(dr^2\times g_{\text{can}}))$ with $f(t)=(n-1)(n-2)(T'-t)$ for some $T'>T$. The only simply connected complete locally euclidean manifold is $(\R^n,g_{\text{can}} )$. When it locally splits off, let us consider a smooth unit null eigenvector field of  Ricci.  Its dual 1-form is closed since the vector field is parallel. Since the manifold is simply connected, it is (globally) exact and the potential function will give a global splitting. \end{remark}

\centerline{\bf Acknowledgements} The authors are indebted  to Jim Isenberg and Mariel Saez for useful   discussions  about
this work. 

\smallskip 
\noindent  Beomjun Choi has been partially supported by NSF grant  DMS-1600658.\\
Panagiota Daskalopoulos  has been partially supported by NSF grant   DMS-1600658 and the Simons Foundation.

\end{document}